\documentclass{amsart}[14pt,a4paper]
\usepackage{geometry}
\usepackage{amsfonts}
\usepackage{hyperref}
\usepackage{amsmath}
\usepackage{galois}
\DeclareMathSizes{10}{10}{7}{5}
\usepackage{mathtools}
\numberwithin{equation}{section}

\usepackage{enumitem}
\usepackage{amssymb}
\usepackage{mathrsfs}
\newtheorem{theorem}{Theorem}[section]
\newtheorem*{theorema}{Theorem A}
\newtheorem*{theoremb}{Theorem B}
\newtheorem*{theoremc}{Theorem C}
\newtheorem{lemma}{Lemma}[section]
\newtheorem{remark}{Remark}[section]
\newtheorem{proposition}{Proposition}[section]

\newtheorem{corollary}{Corollary}[section]

\newtheorem*{problem}{Problem}

\newcommand{\upcite}[1]{\textsuperscript{\textsuperscript{\cite{#1}}}}
\begin{document}
\title[\textbf{Area of Julia sets of non-renormalizable cubic polynomials}]
{\textbf{Area of Julia sets of non-renormalizable cubic polynomials}\protect\footnotemark[2]}
\author{Jianyong Qiao}
\author{Hongyu Qu}
\address{School of Sciences, Beijing University of Posts and Telecommunications, Beijing
100786, P. R. China. \textit{Email:} \textit{qjy@bupt.edu.cn}}
\address{School of Sciences, Beijing University of Posts and Telecommunications, Beijing
100786, P. R. China. \textit{Email:} \textit{hongyuqu@bupt.edu.cn}}
\maketitle
\begin{abstract}
The long-standing problem of existence of nowhere dense rational Julia set with positive area has been solved by an example in quadratic polynomials by Buff and Ch\'eritat. Since then many efforts have been devoted to finding out new classes of rational maps with nowhere dense Julia sets having positive area. So far, all known examples of this kind are renormalizable with only one exception which is a quadratic polynomial. In this paper, by developing a new approach, we prove that there exists a non-renormalizable cubic polynomial having a Julia set with positive area.
\end{abstract}

\section{Introduction and main results}
This paper is mainly concerned with Lebesgue measure problem of Julia sets of polynomials, which can be traced back to the study of Fatou\upcite{F1919}. Fatou gave the first example of Julia set with zero area. In the 1980s and 1990s, a large number of examples of Julia sets with zero area were found (see \cite{DH84}\cite{DH85}\cite{McM}\cite{Lyu83}\cite{Lyu84}\cite{Lyu}\cite{PZ}\cite{Shi1}). On the other hand, Douady, Buff, Nowicki and Strien also tried to construct polynomial Julia sets with positive area (see \cite{B97} and \cite{C1}). It was not until the beginning of this century that Buff and Ch\'eritat constructed the first example of nowhere dense rational Julia set with positive area based on the plan of Douady\upcite{C1}. In fact, they constructed three types of quadratic polynomials having Julia sets with positive area, which are Cremer quadratic polynomials, Siegel quadratic polynomials and infinite renormalizable quadratic polynomials with unbounded satellite combinatorics. In recent years, Avila and Lyubich\upcite{AL} constructed infinite renormalizable quadratic polynomials with bounded primitive combinatorics having Julia sets with positive area\footnote{This is the first example of locally connected quadratic Julia set with positive area.}; Dudko and Lyubich\upcite{DL18} constructed infinite renormalizable quadratic polynomials with bounded satellite combinatorics having Julia sets with positive area.

We observe that based on the results on quadratic polynomials and the Douady-Hubbard theory of polynomial-like mappings, for all $d\geq3$, one can construct a polynomial of degree $d$ having a Julia set with positive area.
\footnote{This idea had been used by Y. Fu and F. Yang\upcite{FY} to construct rational Sierpi\'nski carpet Julia sets with positive area.}
Let $P_{\theta}:\mathbb{C}\to\mathbb{C}$ be a quadratic polynomial
\[P_{\theta}(z)=e^{2\pi i\theta}z+z^2,\]
where $\theta$ has continued fraction expansion
\[\theta=[0,a_1,a_2,\cdots]=\frac{1}{a_1+\frac{1}{a_2+\frac{1}{a_3+\cdots}}}.\]
According to \cite{BC}, we have

\begin{theorema}[Buff and Ch\'eritat]
\label{T139}There exists a dense subset $\mathcal{S}$ of $\mathbb{R}/\mathbb{Z}$ such that for all $\theta\in\mathcal{S}$,
$f_{\theta}$ has a Julia set with positive area.
\end{theorema}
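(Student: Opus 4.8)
\emph{Proof sketch.} The plan is to carry out Douady's program for manufacturing positive-area quadratic Julia sets (the route taken by Buff and Ch\'eritat). I would produce the rotation number $\theta$ as a limit $\theta=\lim_n\theta_n$ of \emph{Siegel} parameters $\theta_n$ of bounded type whose continued fractions agree on longer and longer initial segments: having fixed $[0,a_1,\dots,a_n]$ at stage $n$, I continue it in two ways, the difference being that $a_{n+1}$ is chosen \emph{enormous}, to get $\theta_n$ and $\theta_{n+1}$. Since the initial segments may be prescribed arbitrarily, the limit $\theta$ can be placed in any given subinterval of $\mathbb{R}/\mathbb{Z}$, which will give density of $\mathcal{S}$. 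The goal is to arrange (i) that $\theta$ is a Cremer number (its Brjuno sum diverges) for which $P_\theta$ has no bounded Fatou component, so that $J_\theta=K_\theta$, and (ii) that $\operatorname{area}(K_{\theta_n})\ge c>0$ for \emph{every} $n$; the conclusion then follows from upper semicontinuity of $\theta\mapsto\operatorname{area}(K_\theta)$.

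Two inputs carry the argument. \textbf{(A) Semicontinuity and no interior.} Because the basin of infinity is open and escape is stable under perturbation of the parameter (a compact set on which some iterate exceeds the escape radius keeps escaping after a small change of $\theta$, by compactness), the map $\theta\mapsto\operatorname{area}(K_\theta)$ is upper semicontinuous; hence $\operatorname{area}(K_{\theta_n})\ge c$ for all $n$ together with $\theta_n\to\theta$ forces $\operatorname{area}(K_\theta)\ge c$, and combined with $J_\theta=K_\theta$ this yields $\operatorname{area}(J_\theta)\ge c>0$. That $\theta$ be Cremer and that $P_\theta$ carry no bounded Fatou component are genericity conditions on the tail of the continued fraction, compatible with "$a_{n+1}$ as large as we please". \textbf{(B) Persistence of area.} This is the heart. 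Inserting a huge partial quotient $a_{n+1}$ makes the new Siegel disk $\Delta_{\theta_{n+1}}$ microscopic, but on the scale of the (comparatively large) disk $\Delta_{\theta_n}$ the dynamics of $P_{\theta_{n+1}}$ near $0$ is extremely close to that of $P_{\theta_n}$ — a suitable high return map of $P_{\theta_{n+1}}$ is a small perturbation of $P_{\theta_n}$, via near-parabolic (cylinder) renormalization. From this one extracts a quantitative statement: all but $\varepsilon_n$ of the area of $K_{\theta_n}$ stays non-escaping for $P_{\theta_{n+1}}$, i.e. $\operatorname{area}(K_{\theta_{n+1}})\ge\operatorname{area}(K_{\theta_n})-\varepsilon_n$, with $\varepsilon_n\to0$ controllable by the size of $a_{n+1}$.

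Assembling: start from a bounded-type $\theta_0$ near the target whose Siegel disk is well behaved — boundary a quasicircle through the critical point, which holds for a dense set of bounded-type numbers — so that $\operatorname{area}(K_{\theta_0})\ge\operatorname{area}(\Delta_{\theta_0})=:2c>0$. At stage $n$ pick $a_{n+1}$ so large that $\varepsilon_n\le 2^{-n-2}c$, that this partial quotient contributes to the divergence of the Brjuno sum of $\theta$, and that the qualitative "goodness" of the Siegel disk survives the renormalization step so the next step applies. Then $\operatorname{area}(K_{\theta_n})\ge 2c-\sum_{k<n}\varepsilon_k\ge c$ for all $n$, the limit $\theta$ is a Cremer number with $J_\theta=K_\theta$, and $\operatorname{area}(J_\theta)\ge c>0$. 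Letting $\theta_0$ range over all subintervals produces the dense set $\mathcal{S}$.

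The main obstacle is input (B). It is delicate precisely because a near-parabolic perturbation generically makes the Julia set \emph{expand} (parabolic implosion), so keeping $\operatorname{area}(K_\cdot)$ from dropping is not free: one must bound the area by which the escaping set grows \emph{inside} the old Siegel disk. Making this quantitative requires working within a renormalization-invariant class of maps (the Inou--Shishikura class), whose invariance is exactly what propagates the "good Siegel disk" hypotheses along the whole induction and, at the same time, supplies the geometric control — fundamental domains, Koebe-type distortion bounds, and the location of the perturbed Siegel disk inside $\Delta_{\theta_n}$ — needed for the area bookkeeping. By comparison, the semicontinuity, the density argument, and the Cremer/Brjuno arithmetic are soft.
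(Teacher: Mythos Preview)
The paper does not prove Theorem~A: it is quoted verbatim as a result of Buff and Ch\'eritat \cite{BC} and used as a black box (for instance, in the proof of Proposition~\ref{p139}). So there is no ``paper's own proof'' to compare against; rather, the paper \emph{adapts} the Buff--Ch\'eritat scheme to the cubic family $\mathcal{F}$ in Section~7, culminating in Proposition~\ref{P001} and the proof of Theorem~\ref{T239}.

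Your sketch is a faithful outline of the Buff--Ch\'eritat construction and matches the structure the paper itself follows for the cubic analogue: build $\theta$ as a limit of bounded-type Siegel parameters with huge inserted partial quotients, use upper semicontinuity of $\theta\mapsto\operatorname{area}(K_\theta)$, and arrange that the limit is Cremer (divergent Brjuno sum) with $J_\theta=K_\theta$. Where your account is a bit compressed is in input~(B), the ``persistence of area'' step. In the actual argument this is not a single renormalization estimate but the conjunction of three ingredients, listed explicitly at the start of Section~7: (a) the lower bound $\liminf_n\operatorname{dens}_U(\Delta'_n)\ge\tfrac12$ on the density of the perturbed Siegel disk inside the old one (Theorem~B, proved via Ch\'eritat's parabolic-explosion technique in the quadratic case rather than purely via Inou--Shishikura renormalization); (b) control showing the postcritical set of $P_{\alpha_n}$ stays in a $\delta$-neighborhood of $\Delta_{P_\alpha}$; and (c) McMullen's result that every boundary point of $\Delta_{P_\alpha}$ is a Lebesgue density point of the set $K(\delta)$ of orbits that stay $\delta$-close to the disk. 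These three together yield the inequality $\operatorname{area}(K_{\alpha_n})\ge(1-\epsilon)\operatorname{area}(K_\alpha)$ (the quadratic version of Proposition~\ref{P001}), which is the precise form of your $\operatorname{area}(K_{\theta_{n+1}})\ge\operatorname{area}(K_{\theta_n})-\varepsilon_n$. Your identification of (B) as the heart of the matter, and of the need for a renormalization-invariant class to propagate the hypotheses, is exactly right.
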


In fact, according to \cite{BC}, there exist two nonempty dense subsets $\mathcal{S}_1,\mathcal{S}_2$ of $\mathcal{S}$
with $\mathcal{S}=\mathcal{S}_1\cup\mathcal{S}_2$ such that for all $\theta\in\mathcal{S}_1$
(or $\theta\in\mathcal{S}_2$), the origin is a Cremer (or Siegel) point of $P_{\theta}$
and Julia set $J(P_{\theta})$ of $P_{\theta}$ has positive area.

For all $\theta\in\mathbb{R}/\mathbb{Z}$, $\epsilon>0$ and positive integer $d\geq3$, we set
\[f_{\theta,\epsilon,d}(z)=P_{\theta}(z)+\epsilon z^d\ (z\in\mathbb{C}).\]
Then we have

\begin{proposition}
\label{p139}For every positive integer $d\geq3$ and enough small $\epsilon$, there exist $\theta\in\mathcal{S}$ and Jordan regions $U,V\subset\mathbb{C}$ such that $U$ is compactly contained in $V$, $f_{\theta,\epsilon,d}|_U:U\to V$ is a quadratic-like mapping and $f_{\theta,\epsilon,d}|_U$ is hybrid equivalent to $P_{\theta}$.
\end{proposition}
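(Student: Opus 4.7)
The plan is to fix any $\theta\in\mathcal{S}$ and prove that, for all sufficiently small $\epsilon$, the map $f_{\theta,\epsilon,d}$ admits a quadratic-like restriction in a neighborhood of $K(P_\theta)$ which is hybrid equivalent to $P_\theta$. The natural codomain $V$ comes from equipotentials of $P_\theta$: since $\theta\in\mathcal{S}$ the critical orbit of $P_\theta$ is bounded (the fixed point $0$ is either Cremer or Siegel), so every equipotential of the Green's function $G_{P_\theta}$ is a Jordan curve. Fixing a small $r_0>0$, I would set $V=\{G_{P_\theta}<r_0\}$; the functional equation $G_{P_\theta}\circ P_\theta=2G_{P_\theta}$ then gives $P_\theta^{-1}(V)=\{G_{P_\theta}<r_0/2\}$, a Jordan domain compactly contained in $V$, and $P_\theta\colon P_\theta^{-1}(V)\to V$ is a quadratic-like representative of $P_\theta$ in its own hybrid class.

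Next I would run a perturbation argument. On $\overline V$ the polynomials $f_{\theta,\epsilon,d}$ converge uniformly to $P_\theta$ as $\epsilon\to 0$, and $\partial V$ is disjoint from the critical values of $P_\theta$, which lie in $K(P_\theta)\subset V$. Applying Rouch\'e's theorem to $f_{\theta,\epsilon,d}(z)-w=(P_\theta(z)-w)+\epsilon z^d$ on $\partial V$, uniformly for $w\in\overline V$, shows that for $\epsilon$ small enough every $w\in\overline V$ has exactly two $f_{\theta,\epsilon,d}$-preimages inside $V$. Consequently the component $U$ of $f_{\theta,\epsilon,d}^{-1}(V)$ lying in $V$ is a single Jordan domain close to $P_\theta^{-1}(V)$, with $U\Subset V$, and $f_{\theta,\epsilon,d}\colon U\to V$ is proper of degree two, i.e.\ a quadratic-like map. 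The remaining $d-2$ preimages of $V$ diverge to infinity as $\epsilon\to 0$ because the leading behavior of $f_{\theta,\epsilon,d}$ at infinity is $\epsilon z^d$, so they do not interfere with the picture inside $V$.

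Finally I would identify the hybrid class. By the Douady--Hubbard straightening theorem, $f_{\theta,\epsilon,d}|_U$ is hybrid equivalent to a unique monic centered quadratic polynomial $Q$. The origin is an irrationally indifferent fixed point of $f_{\theta,\epsilon,d}$ with multiplier $e^{2\pi i\theta}$; by Naishul's theorem when $\theta\in\mathcal{S}_1$ (Cremer case, where a hybrid equivalence is in particular a topological conjugacy at $0$) and by the conformality of the hybrid equivalence on the closed Siegel disk when $\theta\in\mathcal{S}_2$, the corresponding fixed point of $Q$ has the same multiplier. Hence $Q$ is affinely conjugate to $P_\theta$, and $f_{\theta,\epsilon,d}|_U$ is hybrid equivalent to $P_\theta$. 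The main obstacle is the perturbation step: although $f_{\theta,\epsilon,d}$ has global degree $d\geq 3$, one must isolate inside $V$ exactly a degree-two local branched cover, which relies on the compactness of $\overline V$ together with the fact that the $d-2$ spurious preimages diverge to infinity as $\epsilon\to 0$.
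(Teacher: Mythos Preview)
Your approach is correct and follows a genuinely different route from the paper. The paper does not fix $\theta\in\mathcal S$ in advance. Instead it works with the analytic family $\{f_{\theta,\epsilon,d}\}_{\theta\in B}$ over a complex disk $B$ of parameters (using large round disks $D_R$ rather than equipotentials as codomains), invokes Douady--Hubbard to obtain a \emph{continuous} straightening map $\chi:B\to\mathbb C$ with $f_{\theta,\epsilon,d}|_{V_\theta}$ hybrid equivalent to $z^2+\chi(\theta)$, and then---because the conjugacies $\phi_\theta$ are not known to depend continuously on $\theta$---runs an indirect argument: on the dense set of Brjuno parameters the multiplier at the Siegel fixed point is preserved (by conformality of $\phi_\theta$ on the Siegel disk, exactly your $\mathcal S_2$ argument), and a pigeonhole/continuity argument on the two fixed points of $z^2+\chi(\theta)$ produces a real interval $I$ on which the straightened multiplier equals $e^{2\pi i\theta}$; finally the density of $\mathcal S$ in $\mathbb R/\mathbb Z$ furnishes some $\theta\in I\cap\mathcal S$.

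Your route is more direct and actually yields a stronger conclusion (it works for every $\theta\in\mathcal S$, once $\epsilon$ is small enough depending on $\theta$), at the cost of importing extra input in the Cremer case. One point worth tightening: Naishul's theorem in its usual formulation presupposes that \emph{both} germs are already indifferent, so you should first note that the image fixed point $\phi(0)$ of the straightened quadratic $Q$ can be neither attracting (since $0\in J_f=\partial K_f$ and $\phi$ is a homeomorphism $K_f\to K_Q$) nor repelling (a repelling germ has $\{0\}$ as its only local compact forward-invariant set, whereas a Cremer germ carries nontrivial P\'erez-Marco hedgehogs). Once $\phi(0)$ is known to be indifferent, Naishul delivers the equality of multipliers. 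The paper's family argument sidesteps this issue entirely by never needing to match multipliers at an individual Cremer parameter.
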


Combining Theorem A and Proposition \ref{p139}, for every positive integer $d\geq3$,
there exit $\theta\in\mathbb{R}/\mathbb{Z}$ and $\epsilon>0$ such that $f_{\theta,\epsilon,d}$ has a small
Julia set with positive area. Thus $f_{\theta,\epsilon,d}$ has a Julia set with positive area.
So far, all known rational maps with nowhere dense Julia sets having positive area are quadratic or renormalizable. Recall that a rational map $f$ of degree greater than $1$ is said to be renormalizable if there exist a positive integer $p$ and two Jordan regions $U$ and $V$ such that $U$ is compactly contained in $V$ and the restriction $f^p|_U:U\to V$ is a quadratic-like mapping with connected Julia set.
Moreover, if $f$ is a quadratic polynomial, it is requested that $p\geq2$.
Noting that dynamical systems of renormalizable rational maps are similar to dynamical systems of quadratic polynomials at a small scale,
we thus have the following natural problem.

\begin{problem}
Does there exist a non-renormalizable polynomial of degree greater than $2$ such that its Julia set has positive area?
\end{problem}

In this paper, we give a positive answer to this problem. Let $f_{\theta}$ be a cubic polynomial
\[f_{\theta}(z)=e^{2\pi i\theta}z(z+1)^2,\]
where $\theta\in\mathbb{R}/\mathbb{Z}$. We first have

\begin{proposition}
\label{p239}For all $\theta\in\mathbb{R}/\mathbb{Z}$, the cubic polynomial $f_{\theta}$ is non-renormalizable.
\end{proposition}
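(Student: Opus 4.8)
The plan is to argue by contradiction. Suppose $f_\theta$ were renormalizable, so that for some $p\ge 1$ there are Jordan regions $U\Subset V$ with $g:=f_\theta^p|_U\colon U\to V$ quadratic-like (in particular, of degree $2$) and with connected Julia set; let $K$ be its filled Julia set (connected and full) and $c\in K$ its unique critical point. The one structural feature of $f_\theta(z)=e^{2\pi i\theta}z(z+1)^2$ that I will use is that $f_\theta^{-1}(0)=\{0,-1\}$, where $0$ is a fixed point and $-1$ a simple critical point, the remaining critical point being $-1/3$; note also that $f_\theta'(0)=e^{2\pi i\theta}$ has modulus one, so $0$ is always a neutral fixed point.

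First I would show that $0\notin U$ for any such renormalization. If $0\in U\subseteq V$, then since $g$ is proper of degree $2$ the preimage divisor $g^{-1}(0)\cap U$ has degree $2$. But the local degree of $f_\theta^p$ at a point $w$ with $f_\theta^p(w)=0$ equals $2^{\#\{0\le i\le p-1\,:\,f_\theta^i(w)\in\{-1,-1/3\}\}}$; this is $1$ for $w=0$ (since $(f_\theta^p)'(0)\ne 0$), while for every $w\ne 0$ the orbit $w\mapsto\dots\mapsto 0$ must pass through $f_\theta^{-1}(0)\setminus\{0\}=\{-1\}$, making the local degree at least $2$. Hence $g^{-1}(0)\cap U$ has degree $1$ or at least $3$, never $2$ — a contradiction. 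In particular $0\notin K$.

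Next I would locate $c$. As $c$ is a critical point of $f_\theta^p$, some $f_\theta^j(c)$ with $0\le j\le p-1$ is a critical point of $f_\theta$. It cannot be $-1$, for then $f_\theta^{j+1}(c)=0$ and hence $f_\theta^{np}(c)=0\in K$ for large $n$, against the previous step. Thus $f_\theta^j(c)=-1/3$. Invoking the standard fact that the small filled Julia sets $K_l:=f_\theta^l(K)$, $0\le l\le p-1$, are cyclically permuted by $f_\theta$ and each carries a quadratic-like restriction of $f_\theta^p$, I may pass to the phase-$j$ renormalization and assume $c=-1/3\in K$. The forward orbit of $-1/3$ then lies in the compact set $A:=\bigcup_{l=0}^{p-1}K_l$, a finite union of full continua (so $\widehat{\mathbb C}\setminus A$ is connected), and $0\notin A$: if $0\in K_l$ then, $0$ being fixed, $0\in K_{l'}$ for every $l'$, so $0\in K\subseteq U$, impossible.

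The final, and I expect hardest, step is to contradict the existence of this trapped orbit, using that $0$ is neutral; here the argument splits according to the type of $0$. If $\theta\in\mathbb Q$, then $0$ is parabolic and, by Fatou's theorem, the immediate basin of its parabolic cycle contains a critical point of $f_\theta$; since $-1\in f_\theta^{-1}(J(f_\theta))=J(f_\theta)$ lies in no Fatou component, that critical point must be $-1/3$, so $f_\theta^n(-1/3)\to 0$ and $0\in\omega(-1/3)\subseteq \overline A=A$ — a contradiction. If $0$ is a Cremer point, then $0\in J(f_\theta)$ is not parabolic and cannot satisfy the expansion conclusion of Mañé's theorem, so $0$ lies in the $\omega$-limit set of some recurrent critical point of $f_\theta$; since $-1$ is not recurrent ($\omega(-1)=\{0\}$), again $0\in\omega(-1/3)\subseteq A$, impossible. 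Finally, if $0$ is the centre of a Siegel disc $\Delta$, then $\partial\Delta$ lies in the closure of the postcritical set $\{0\}\cup\overline{\{f_\theta^n(-1/3):n\ge 1\}}$; as $0\notin\partial\Delta$ this forces $\partial\Delta\subseteq A$. Since $\Delta$ is a Fatou component while each $\partial K_l\subseteq J(f_\theta)$, either $\Delta\subseteq\operatorname{int}K_l$ for some $l$ — putting $0\in A$ — or $\Delta\cap A=\emptyset$; in the latter case $\Delta$ is both open and closed in the connected set $\widehat{\mathbb C}\setminus A$, hence equals it, so $\infty\in\Delta$, contradicting the boundedness of $\Delta$. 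As the three cases exhaust all possibilities, $f_\theta$ is non-renormalizable.
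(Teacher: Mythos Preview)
Your overall strategy---contradiction, a degree count showing $0\notin U$, and then a case analysis (parabolic/Cremer/Siegel) forcing the critical orbit to drag $0$ back into the renormalization---coincides with the paper's. Your parabolic and Cremer cases are essentially the same as theirs.

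The Siegel case has a real gap. You write that $A=\bigcup_{l=0}^{p-1}K_l$ is ``a finite union of full continua (so $\widehat{\mathbb C}\setminus A$ is connected)'', and then run a clopen argument to conclude $\Delta=\widehat{\mathbb C}\setminus A$. But a finite union of full continua need not have connected complement: two closed arcs whose union is a circle are each full, yet the circle disconnects the sphere. In the renormalization setting the small filled Julia sets $K_l$ can touch (satellite type), and you give no argument ruling out $\Delta$ lying in a bounded complementary component of $A$. The paper avoids the whole issue by never passing to $A$: it applies the Ma\~n\'e--Shishikura--Tan theorem to $f_\theta^p$ rather than to $f_\theta$, obtains a recurrent critical point $c'$ of $f_\theta^p$ whose postcritical closure contains $\partial\Delta_\theta$, and then finds $0\le k\le p-1$ with $f_\theta^k(\mathcal O_{f_\theta^p}(c'))\subset\mathcal O_{f_\theta^p|_U}(c)\subset U$. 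Since $\partial\Delta_\theta$ is $f_\theta$-invariant this yields $\partial\Delta_\theta\subset U$, and now $\mathbb C\setminus\overline U$ \emph{is} connected because $U$ is a single Jordan domain, so $\Delta_\theta\subset U$ and $0\in U$. Your argument can be patched along the same lines: once you have arranged $-1/3\in K$, the $f_\theta^p$-orbit of $-1/3$ lies in $K\subset U$, and you should aim for $\partial\Delta\subset U$ rather than merely $\partial\Delta\subset A$.
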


Our main result is the following theorem.

\begin{theorem}
\label{T239}There exists $\theta\in\mathbb{R}/\mathbb{Z}$ such that the non-renormalizable cubic polynomial $f_{\theta}$ has the Cremer point $0$ and the Julia set with positive area.
\end{theorem}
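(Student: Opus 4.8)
The plan is to transplant to the cubic family $\{f_\theta\}$ the perturbative scheme of Douady, realised by Buff and Ch\'eritat for quadratic Cremer polynomials. One constructs the desired parameter as a limit $\theta_\infty=\lim_n\theta_n$, where each $\theta_n$ is a bounded-type irrational for which $f_{\theta_n}$ has a Siegel disk at $0$, arranged so that the area of the filled Julia set $K(f_{\theta_n})$ stays bounded below by a fixed $\delta>0$, so that $\theta_\infty$ is a Cremer number, and so that $f_{\theta_\infty}$ has no bounded Fatou component; passing to the limit then gives $\operatorname{area}(J(f_{\theta_\infty}))\ge\delta$, and non-renormalizability is provided by Proposition~\ref{p239}. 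The feature that is genuinely new compared with the quadratic case is that $f_\theta$ carries a second critical point, $-1/3$ (the critical point $-1$ is harmless since $f_\theta(-1)=0$ is the fixed point), and this free critical point has to be kept under control throughout.

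\emph{Step 1: structure at bounded type.} For $\theta$ of bounded type I would first establish that $f_\theta$ has a Siegel disk $\Delta_\theta$ centred at $0$, that $\partial\Delta_\theta$ is a quasicircle through $-1/3$, and that $\Delta_\theta$ together with its preimages are the only bounded Fatou components of $f_\theta$, so that $K(f_\theta)=\overline{\bigcup_k f_\theta^{-k}(\Delta_\theta)}$ and $J(f_\theta)=K(f_\theta)\setminus\bigcup_k f_\theta^{-k}(\Delta_\theta)$. This would follow by quasiconformal surgery from a degree-three Blaschke-type model with a critical point on the unit circle and a critical point mapped onto the fixed point (in the spirit of Ghys, Douady--Herman and Shishikura, with quasicircle regularity as for bounded-type quadratic Siegel disks), together with a combinatorial argument special to $f_\theta$ ruling out the possibilities that the free critical point escapes, is attracted to a cycle, or is periodic. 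I would also record, following Buff--Ch\'eritat, the continuity of the conformal radius of $\Delta_\theta$ at Brjuno parameters, the Hausdorff semicontinuity of $\overline{\Delta_\theta}$ under perturbation, and the parabolic-explosion analysis of the period-$q$ cycle ($q$ a convergent denominator of $\theta$) that $f_{\theta'}$ acquires near $\partial\Delta_\theta$ when $\theta'$ is close to $\theta$---now carried out while simultaneously tracking the free critical value.

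\emph{Step 2: the area estimate.} Given $\theta_n$ as in Step 1, I would produce $\theta_{n+1}$ arbitrarily close to $\theta_n$, again of bounded type, with $\operatorname{area}(K(f_{\theta_{n+1}}))\ge(1-\varepsilon_n)\operatorname{area}(K(f_{\theta_n}))$ and $\sum_n\varepsilon_n<\infty$. As in Douady's program, the portion of $K(f_{\theta_n})$ accumulated on the quasicircle $\partial\Delta_{\theta_n}$ that is lost when the Siegel disk shrinks to $\Delta_{\theta_{n+1}}$ is recaptured as Julia set: with the tail of the continued fraction of $\theta_{n+1}$ chosen appropriately, the period-$q$ cycle of $f_{\theta_{n+1}}$ lying near $\partial\Delta_{\theta_n}$ supports a thickened explosion that forces a definite Lebesgue density of $J(f_{\theta_{n+1}})$ along the old boundary curve. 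Iterating with $\prod_n(1-\varepsilon_n)>0$ keeps $\operatorname{area}(K(f_{\theta_n}))\ge\delta$ for every $n$, and one imposes at the same time on the continued-fraction data that $\theta_\infty=\lim_n\theta_n$ be a Cremer number (through Yoccoz's sharp arithmetic condition, equivalently by forcing small cycles to accumulate at $0$) and that the orbit of the free critical point of $f_{\theta_\infty}$ accumulate on the hedgehog of $0$, so that no bounded Fatou component can be created by it.

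\emph{Step 3: passage to the limit.} Since $\theta\mapsto K(f_\theta)$ is upper semicontinuous for the Hausdorff distance, $\operatorname{area}(K(f_{\theta_\infty}))\ge\limsup_n\operatorname{area}(K(f_{\theta_n}))\ge\delta$. As $\theta_\infty$ is Cremer, $0\in J(f_{\theta_\infty})$ and there is no Siegel disk at $0$; since moreover the free critical point activates no Fatou component, $f_{\theta_\infty}$ has no bounded Fatou component, whence $K(f_{\theta_\infty})=J(f_{\theta_\infty})$ and $\operatorname{area}(J(f_{\theta_\infty}))\ge\delta>0$. (More robustly, one shows that the area produced in Step 2 lies in the hedgehog of $0$ together with its preimages, hence in $J(f_{\theta_\infty})$, independently of the behaviour of $-1/3$.) I expect the main obstacle to be Step 2: proving the area-recapture estimate in the cubic setting, where the explosion near $\partial\Delta_{\theta_n}$ must be analysed in the presence of the second critical point, and ensuring that the accumulated area cannot be absorbed by a bounded Fatou component of the limit map---a phenomenon that simply cannot occur in the quadratic Cremer construction.
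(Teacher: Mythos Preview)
Your overall scheme is the right one and matches the paper's: start from bounded-type Siegel parameters, perturb while controlling loss of filled-Julia-set area (your Step~2 is precisely the paper's Proposition~\ref{P001}), pass to the limit using upper semicontinuity of $K$, and show the limit is Cremer with $J=K$.

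The gap is in Step~2, which you yourself flag as the obstacle. You propose to redo the parabolic-explosion analysis and the density estimate (Theorem~B) directly for the cubic family while tracking the second critical point, but you give no mechanism for this, and the paper explicitly rejects this route, noting that Ch\'eritat's explosion technique is built for quadratics and does not transplant. The paper's solution exploits a structural coincidence you do not mention: the polynomial $P(z)=z(z+1)^2$ underlying $f_\theta$ is exactly the model map of the Inou--Shishikura class $\mathcal{IS}_0$, so $f_\theta$ sits in the fibre $\mathcal{IS}_\alpha$. The near-parabolic renormalization $\mathcal{R}$ is a uniform contraction (factor $\lambda<1$) of the Teichm\"uller distance on these fibres, and after one application of $\mathcal{R}$ the quadratic $Q_\beta$ also lands in them. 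Hence $\mathcal{R}^m(f_{\alpha_n})$ and $\mathcal{R}^{m+1}(Q_{\beta_n})$ are $O(\lambda^m)$-close in Teichm\"uller distance; a holomorphic-motions argument manufactures quasiconformal conjugacies of dilatation $e^{O(\lambda^m)}$ between their Siegel disks; and Astala's area-distortion theorem transfers the quadratic density bound $\ge\tfrac12$ to the cubic side with a loss that vanishes as $m\to\infty$. A separate lemma (Lemma~\ref{L4.2}) shows the density statement is renormalization-invariant, so it descends back to $f_{\alpha_n}$ itself. No cubic explosion is needed, and the critical point $-1/3$ you worry about is precisely the $\mathcal{IS}$-class critical point, automatically controlled by the renormalization tower.

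One further simplification: your plan to rule out bounded Fatou components by forcing the orbit of $-1/3$ onto the hedgehog is replaced in the paper by a short Shishikura-type surgery. If $f_\theta$ had a periodic Siegel cycle one could make both $0$ and that cycle attracting; the resulting cubic would then need two non-pre-periodic finite critical points, impossible since $-1$ is always pre-fixed. The Cremer property itself is obtained not from Yoccoz's criterion applied to $f_\theta$ (which is not quadratic) but by pushing a hypothetical Siegel disk through the $\mathcal{IS}$-class to $Q_\alpha$ and then invoking Yoccoz there.
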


\begin{remark}
{\rm Since quadratic polynomials with an indifferent fixed point are non-renormalizable, the results of Buff and Ch\'eritat tell us the existence of non-renormalizable quadratic polynomials having Julia sets with positive area.}
\end{remark}

Furthermore we define a family of cubic polynomials
\[\mathcal{F}=\{e^{2\pi i\theta}z(z+1)^2|\theta\in\mathbb{R}/\mathbb{Z}\}.\]
Our main point of view on the proof of Theorem \ref{T239} is to realize the method of constructing quadratic Cremer Julia sets with positive area by Buff and Ch\'eritat on the family $\mathcal{F}$.
The main difficulty of applying this method on $\mathcal{F}$ is how to control the limit area of the perturbed Siegel disks of the family $\mathcal{F}$. Let us recall the control of Buff and Ch\'eritat on the limit area of the perturbed Siegel disks of the family of quadratic polynomials.
Assume that $\theta$ is a Brjuno number and $\{\theta_n\}_{n=1}^{+\infty}$ is a sequence of Brjuno numbers such that $\theta_n\to\theta$ as $n\to+\infty$. According to \cite{Yo}, $f_{\theta}$ and $f_{\theta_n}$($n\geq1$) have Siegel disks at 0. We denote the Siegel disk of $f_{\theta}$ at 0 by $\Delta_{f_{\theta}}$ and
the Siegel disk of $f_{\theta_n}$ at 0 by $\Delta_{f_{\theta_n}}$($n\geq1$). It is well known that the dynamical system of $f_{\theta}$ depends extremely sensitively on $\theta$. Generally, there exists a drastic difference between asymptotic properties of the sequence
$\{\Delta_{f_{\theta_n}}\}_{n=1}^{+\infty}$ of Siegel disks and properties of the Siegel disk $\Delta_{f_{\theta}}$.
The result of Buff and Ch\'eritat is to give a nice choice of $\{\theta_n\}_{n=1}^{+\infty}$ so that they can control the asymptotic area of the sequence $\{\Delta_{f_{\theta_n}}\}_{n=1}^{+\infty}$.
If $U$ and $X$ are measurable subsets of $\mathbb{C}$, with $0<{\rm area}(U)<+\infty$, we use the notation
\[{\rm dens}_{U}(X):=\frac{{\rm area}(U\cap X)}{{\rm area}(U)}.\]
Then we formula the result as follows.

\begin{theoremb}[Buff and Ch\'eritat]
\label{T1.1}Assume $\alpha:=[a_0,a_1,a_2,\cdots]$ and $\theta:=[0,t_1,t_2,\cdots]$ are Brjuno numbers and let $p_n/q_n$ be the approximants to $\alpha$. Assume
\[\alpha_n:=[a_0,a_1,\cdots,a_n,A_n,t_1,t_2,\cdots]\]
with $(A_n)$ a sequence of positive integers such that
\begin{equation}
\label{F1.1}\limsup\limits_{n\to+\infty}\sqrt[q_n]{\log A_n}\leq1.
\end{equation}
Let $\Delta$ be the Siegel disk of $P_{\alpha}$ and $\Delta_n'$ be the Siegel disk of the restriction of $P_{\alpha_n}$ to $\Delta$. For any nonempty open set $U\subset\Delta$,
\[\liminf\limits_{n\to+\infty}{\rm dens}_{U}(\Delta_n')\geq\frac{1}{2}.\]
\end{theoremb}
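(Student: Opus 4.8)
The plan is to pass to the linearizing coordinate of $P_{\alpha}$ on $\Delta$ and reformulate everything as a perturbation statement for Siegel disks of maps close to a rigid rotation. Let $r$ be the conformal radius of $\Delta$ at $0$ and let $\phi\colon\mathbb{D}_{r}\to\Delta$ be the Koenigs--Siegel parametrization, normalized by $\phi(0)=0$ and $\phi'(0)=1$, so that $\phi\circ R_{\alpha}=P_{\alpha}\circ\phi$ with $R_{\alpha}(w)=e^{2\pi i\alpha}w$. For each $r'<r$ the map $g_{n}:=\phi^{-1}\circ P_{\alpha_{n}}\circ\phi$ is holomorphic on $\mathbb{D}_{r'}$ once $n$ is large (since $P_{\alpha_{n}}\to P_{\alpha}$ locally uniformly and $P_{\alpha}(\phi(\overline{\mathbb{D}_{r'}}))=\phi(\overline{\mathbb{D}_{r'}})\Subset\Delta$), it fixes $0$ with multiplier $e^{2\pi i\alpha_{n}}$, and $g_{n}\to R_{\alpha}$ uniformly on each $\overline{\mathbb{D}_{r'}}$; moreover $\Delta_{n}'=\phi(S_{n})$, where $S_{n}\subset\mathbb{D}_{r}$ is the Siegel disk of $g_{n}$, a genuine topological disk because $g_{n}$ is univalent near $0$. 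Since $\phi$ is conformal, the conclusion $\liminf_{n}\operatorname{dens}_{U}(\phi(S_{n}))\ge\tfrac12$ for every open $U\subset\Delta$ will follow once we establish the localized statement $\liminf_{n}\operatorname{dens}_{B}(S_{n})\ge\tfrac12$ for every ball $B\Subset\mathbb{D}_{r}$, by integrating against the weight $|\phi'|^{2}$.

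The core is therefore to bound from below the part of $\mathbb{D}_{r}$ filled by $S_{n}$, and I would combine three inputs. First, an arithmetic observation: $\alpha_{n}$ and $\alpha$ share their first $n+1$ partial quotients, so once $A_{n}$ is large $\alpha_{n}$ lies extremely close to the rational $p_{n}/q_{n}$, and $P_{\alpha_{n}}$ is a near-parabolic perturbation of $P_{p_{n}/q_{n}}$; the long-time return behaviour of $g_{n}$ near a circle $\{|w|=\rho\}$ with $\rho<r$ is then modelled by parabolic-implosion data---Fatou coordinates for the implosion of $P_{p_{n}/q_{n}}$ together with a Lavaurs map whose rotation number is dictated by the tail $[0,t_{1},t_{2},\dots]=\theta$. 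Second, quantitative linearization after Yoccoz \cite{Yo}: the conformal radius $\rho_{n}$ of $S_{n}$ is governed by a Brjuno-type sum for $\alpha_{n}$, whose only term not already present for $\alpha$ comes from the block $A_{n}$ sitting at denominator-scale $q_{n}$ and has size $\asymp(\log A_{n})/q_{n}$; hypothesis (\ref{F1.1}) is exactly the condition that keeps this term from collapsing $S_{n}$, and pushes $\rho_{n}^{2}/r^{2}$ to stay $\ge\tfrac12$ in the limit. Third, an area estimate: normalizing the linearizer $\psi_{n}\colon\mathbb{D}_{\rho_{n}}\to S_{n}$ of $g_{n}$ by $\psi_{n}'(0)=1$, the Gronwall--Bieberbach area theorem gives $\operatorname{area}(S_{n})\ge\pi\rho_{n}^{2}$, hence the global bound $\operatorname{dens}_{\mathbb{D}_{r}}(S_{n})\ge\rho_{n}^{2}/r^{2}\ge\tfrac12-o(1)$.

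Upgrading this global estimate to the \emph{pointwise} bound $\liminf_{n}\operatorname{dens}_{B}(S_{n})\ge\tfrac12$ on every ball $B\Subset\mathbb{D}_{r}$---and hence the theorem via $\phi$---is where the real work lies. The near $R_{\alpha}$-invariance of $S_{n}$ (it is $g_{n}$-invariant with $g_{n}\approx R_{\alpha}$ and $\alpha$ irrational) should force $S_{n}$ to be organized radially, so that the area it sheds cannot be concentrated at any particular angle; combined with a more precise description, via the parabolic-implosion/Lavaurs picture, of where in $\mathbb{D}_{r}$ the perturbed Siegel disk actually loses area, this has to be parlayed into the uniform density bound. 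The genuine difficulty, and the source of the precise constant $\tfrac12$, is the tension between the size of the perturbation $\|g_{n}-R_{\alpha}\|$, which is $\asymp|\alpha_{n}-\alpha|\asymp 1/(q_{n}q_{n+1})$ and hence essentially independent of $A_{n}$, and the arithmetic linearization scale attached to $\alpha_{n}$, which does shrink as $A_{n}$ grows: seeing that one never loses more than half even when the former dominates the latter, while keeping the parabolic-implosion approximation uniform in $n$ under only the weak constraint (\ref{F1.1}), is the crux. Finally, $\theta$ being Brjuno makes the contribution of the tail $[0,t_{1},t_{2},\dots]$ to the Brjuno sum of $\alpha_{n}$ an $o(1)$ term, so it does not interfere.
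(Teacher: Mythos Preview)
First, note that the paper does not supply a proof of Theorem~B: it is quoted from Buff--Ch\'eritat \cite{BC} as an input, and the closely related Lemma~\ref{T1.1s} is likewise obtained by pointing to \cite[Proposition~6]{BC}. So there is no in-paper argument to compare against, only the original one in \cite{BC}.

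Your outline contains a genuine error, not just an incomplete step. The claim that hypothesis~(\ref{F1.1}) ``keeps the term $(\log A_n)/q_n$ from collapsing $S_n$'' and thereby yields $\rho_n^{2}/r^{2}\ge\tfrac12-o(1)$ via Yoccoz is wrong on its face: the condition $\limsup_n(\log A_n)^{1/q_n}\le 1$ only says $\log\log A_n=o(q_n)$, and it permits $(\log A_n)/q_n\to\infty$ (take for instance $\log A_n=q_n^{2}$). In fact, in the intended application one imposes $A_n^{1/q_n}\to\infty$ (see Proposition~\ref{P001}), which forces $(\log A_n)/q_n\to\infty$; the Brjuno sum $B(\alpha_n)$ then diverges and Yoccoz's universal lower bound on the conformal radius gives nothing. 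So neither the global bound $\operatorname{dens}_{\mathbb{D}_r}(S_n)\ge\tfrac12-o(1)$ nor the constant $\tfrac12$ can come from a conformal-radius/area-theorem argument of the kind you describe.

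In Buff--Ch\'eritat's actual proof the passage to the linearizing coordinate of $P_{\alpha}$ is indeed the first move, and parabolic explosion is indeed the engine, but the roles are different from what you propose. Condition~(\ref{F1.1}) enters not as a size bound on a Brjuno term but as the hypothesis ensuring that the linearizers $\psi_n$ of $P_{\alpha_n}$ converge to $\phi$ uniformly on compact subsets of $\mathbb{D}_r$ (this is why the Remark after Theorem~B calls it a technical condition). Given that convergence, the complement $\mathbb{D}_r\setminus S_n$ is analyzed directly: Ch\'eritat's explosion locates the $q_n$-cycle of $P_{\alpha_n}$, and one shows that $\mathbb{D}_r\setminus S_n$ consists of $q_n$ thin spiralling channels, one attached to each explosion point, whose shape is controlled well enough to read off the local density bound $\ge\tfrac12$ on every open set at once. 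There is no global-to-local upgrade; the local statement is what is proved directly. Your own assessment that ``upgrading \dots\ is where the real work lies'' is correct, but your sketch of that step (rotational near-invariance plus a vague appeal to where area is lost) is not a proof, and the mechanism you propose for the appearance of $\tfrac12$ is not the right one.
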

\begin{remark}
{\rm In \cite{BC}, the condition (\ref{F1.1}) is a technical condition and is not necessary.}
\end{remark}

Since the method of Buff and Ch\'eritat needs to use the parabolic explosion technique of Ch\'eritat, and this technique is established for quadratic polynomials, we can not directly use their method to establish a similar version of Theorem B on the family $\mathcal{F}$.
In this paper,
we first transfer the result of Buff and Ch\'eritat about quadratic polynomials to the class of Inou-Shishikura based on the near-parabolic renormalization operator's uniform contraction
in fiber directions on the class of Inou-Shishikura with respect to
Teichm\"uller distance and uniform area distortion of quasiconformal mappings,
then based on the connection between the class of Inou-Shishikura and the family $\mathcal{F}$,
we establish a similar version of Theorem B on the family $\mathcal{F}$.
Before giving this result, we first recall the class of Inou-Shishikura.
First, consider the ellipse
\[E:=\left\{x+iy\in\mathbb{C}:\left(\frac{x+0.18}{1.24}\right)^2+\left(\frac{y}{1.04}\right)^2\leq1\right\}\]
and the map $g(z):=-\frac{4z}{(1+z)^2}$. Then
the polynomial $P(z):=z(1+z)^2$ restricted to the domain
$V:=g(\widehat{\mathbb{C}}\setminus E)$,
has a fixed point at $0\in V$ with multiplier $1$, a critical point at $-1/3\in V$ that is mapped to $-4/27$. It has another critical point at $-1\in\mathbb{C}\setminus V$ with $P(-1)=0$.
Following \cite{IS}, we define the class of maps
\[\mathcal{IS}_0:=\left\{P\comp\phi^{-1}:\phi(V)\to\mathbb{C}
\left|\begin{matrix}\phi\ {\rm is\ univalent},\
\phi(0)=0,\ \phi'(0)=1,\ {\rm and}\\ \phi\ {\rm has\ a\
quasiconformal\ extension\ to}\ \mathbb{C}\end{matrix}\right.\right\}.\]
For all $f=P\comp\phi^{-1}\in\mathcal{IS}_0$, 0 is a parabolic fixed point of $f$ and $\phi(-1/3)$ is the only critical point of $f$ with the critical value $-4/27$.
Fix a positive integer $N$, let $\mathbb{HT}_N$ denote the set of irrational number $\alpha=[a_0,a_1,a_2,\cdots]$ with $a_j\geq N$ ($j=1,2,\cdots$).
Our result is the following theorem.

\begin{theorem}
\label{T20}Assume $\alpha:=[a_0,a_1,a_2,\cdots]$ and $\theta:=[0,t_1,t_2,\cdots]$ are Brjuno numbers
and let $p_n/q_n$ be the approximants to $\alpha$. Set
\[\alpha_n:=[a_0,a_1,\cdots,a_n,A_n,t_1,t_2,\cdots]\]
with $(A_n)$ a sequence of positive integers such that
\begin{equation*}
\limsup\limits_{n\to+\infty}\sqrt[q_n]{\log A_n}\leq1.
\end{equation*}
Then there exists a positive integer $N$ such that
if $\alpha\in\mathbb{HT}_N$ and $\theta\in\mathbb{HT}_N$, then\\
for all $h\in\mathcal{IS}_0$ and any nonempty open set $U\subset\Delta_{f_{\alpha}}$, we have
\[\liminf\limits_{n\to+\infty}{\rm dens}_U(\Delta'_{f_{\alpha_n}})\geq\frac{1}{2},\]
where $\Delta_{f_{\alpha}}$ is the Siegel disk of
$f_{\alpha}=e^{2\pi i\alpha}h$ centering at $0$ and $\Delta'_{f_{\alpha_n}}$ is the Siegel disk of
the restriction of $f_{\alpha_n}=e^{2\pi i\alpha_n}h$ to
$\Delta_{f_{\alpha}}$ centering at $0$.
\end{theorem}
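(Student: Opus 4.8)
The plan is to reduce Theorem \ref{T20} to Theorem B by transporting the near-parabolic renormalization dynamics from the class $\mathcal{IS}_0$ to the quadratic family via the Inou--Shishikura theory. The essential point is that for $\alpha\in\mathbb{HT}_N$ with $N$ large, both $f_\alpha=e^{2\pi i\alpha}h$ (for any $h\in\mathcal{IS}_0$) and $P_\alpha$ have Siegel disks, and their high iterates of near-parabolic renormalization both land in the Inou--Shishikura class $\mathcal{IS}_0$ (after rescaling), where the renormalization operator is uniformly contracting in the fiber direction with respect to Teichm\"uller distance. I would first recall the precise statement of the Inou--Shishikura invariance theorem: there is $N$ such that if $f\in e^{2\pi i\mathbb{R}}\cdot\mathcal{IS}_0$ has rotation number in $\mathbb{HT}_N$, then the near-parabolic renormalization $\mathcal{R}f$ is again in $e^{2\pi i\mathbb{R}}\cdot\mathcal{IS}_0$ with rotation number shifted by the Gauss map, and $\mathcal{R}$ contracts the Teichm\"uller distance between two elements with the same rotation number by a definite factor $\lambda<1$. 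Since $P_\alpha$ itself, after one near-parabolic renormalization with $\alpha\in\mathbb{HT}_N$, enters $e^{2\pi i\mathbb{R}}\cdot\mathcal{IS}_0$, the orbits of $f_\alpha$ and $P_\alpha$ under $\mathcal{R}$ shadow each other: $\operatorname{Teich}(\mathcal{R}^k f_\alpha,\mathcal{R}^k P_\alpha)\to 0$ geometrically in $k$.

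Next I would set up the comparison of the perturbed Siegel disks. Fix a nonempty open $U\subset\Delta_{f_\alpha}$. Following the Buff--Ch\'eritat scheme, the Siegel disk $\Delta'_{f_{\alpha_n}}$ of the restriction of $f_{\alpha_n}$ to $\Delta_{f_\alpha}$ is reconstructed from the Siegel disks of the renormalized maps by pulling back through the chain of renormalization changes of coordinates; the key estimate in Theorem B — that the density of the perturbed Siegel disk in any open subset of the limiting Siegel disk is asymptotically at least $1/2$ — is proved there by an area argument at a deep renormalization level combined with distortion control of the renormalization coordinate changes. My plan is to run the identical argument but with all the coordinate changes and all the deep-level Siegel disks taken for $f_\alpha$ and $f_{\alpha_n}$ rather than for $P_\alpha$ and $P_{\alpha_n}$. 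The two ingredients that make this legitimate are: (i) the renormalization tower of $f_\alpha$ is, from some level on, entirely inside $\mathcal{IS}_0$ and Teichm\"uller-close to the tower of $P_\alpha$, so the uniform bounds on the geometry of the renormalization coordinate changes that Buff--Ch\'eritat use for $P_\alpha$ hold verbatim (with uniform constants depending only on the compactness of $\mathcal{IS}_0$); (ii) quasiconformal maps distort area in a controlled way, so the identification $\operatorname{Teich}$-close $\Longrightarrow$ area-ratio-close passes through the finitely many and then uniformly-bounded coordinate changes without loss. Concretely, I would express $\operatorname{dens}_U(\Delta'_{f_{\alpha_n}})$ as $\operatorname{dens}_{\tilde U_k}(\tilde\Delta_{n,k})$ up to a multiplicative error controlled by the dilatation of the level-$k$ change of coordinates, where $\tilde\Delta_{n,k}$ is the corresponding deep Siegel disk; then apply the Buff--Ch\'eritat lower bound $1/2$ at level $k$ and let first $n\to+\infty$ and then $k\to+\infty$.

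The hard part will be item (i): showing that the quantitative control Buff--Ch\'eritat obtain for the quadratic renormalization tower survives the transfer, i.e., that the constants in the near-parabolic renormalization estimates (domains of definition of the renormalized maps, moduli of the renormalization annuli, distortion of the coordinate changes $\Phi_k$, and the precise relation between $A_n$, the $q_n$, and the depth at which the perturbed Siegel disk first fails to fill a $(1/2-\varepsilon)$-proportion) depend only on $\mathcal{IS}_0$ and not on the particular map $P_\alpha$. This is exactly where the \emph{uniform} contraction of $\mathcal{R}$ in the fiber direction and the compactness of $\mathcal{IS}_0$ are needed: they guarantee that once the tower of $f_\alpha$ is Teichm\"uller-close to that of $P_\alpha$, every geometric quantity used in the Buff--Ch\'eritat estimate is within a bounded factor of its quadratic counterpart, uniformly in $n$ and $k$. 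A secondary technical point, which I expect to be routine given the Koebe distortion theorem and the standard $K$-quasiconformal area inequality $\operatorname{area}(g(A))/\operatorname{area}(g(B))\le K^2\operatorname{area}(A)/\operatorname{area}(B)$-type bounds, is to track how the $\liminf\ge 1/2$ is preserved under the sequence of pullbacks; the condition $\limsup_{n\to\infty}\sqrt[q_n]{\log A_n}\le 1$ enters exactly as in Theorem B, to ensure that the level at which one applies the area bound can be chosen to grow slowly enough with $n$ that the accumulated distortion stays negligible.
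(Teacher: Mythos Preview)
Your overall strategy---use the Inou--Shishikura contraction to bring the renormalization tower of $f_\alpha$ close to the quadratic tower, apply the Buff--Ch\'eritat bound at a deep level, transfer back via controlled quasiconformal distortion, and let the depth go to infinity---is exactly the paper's approach. But your write-up conflates two distinct plans, and one of them would not work.

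You speak of ``running the identical argument'' of Buff--Ch\'eritat with $f_\alpha$ in place of $P_\alpha$, and you identify as the hard part showing that the constants in their estimates are uniform over $\mathcal{IS}_0$. The paper explicitly avoids this route: Buff--Ch\'eritat's proof of Theorem~B relies on Ch\'eritat's parabolic explosion technique, which is genuinely quadratic-specific and does not extend to $\mathcal{IS}_0$. So the plan of re-proving Theorem~B with uniform constants is not viable as stated. What the paper does instead is use Theorem~B purely as a black box for the quadratic tower $Q_{\beta_n}$ (with $\beta=[0,N,a_1,a_2,\ldots]$, so that $\mathcal R(Q_\beta)$ lands in the same fiber $\mathcal{IS}_\alpha$ as $g=h\circ e^{2\pi i\alpha}$), and then transfer the conclusion.

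The transfer has two separate pieces that your ``concretely'' paragraph blurs together. First, passing between renormalization levels for a \emph{fixed} map is done through the conformal changes of coordinate ${\rm Exp}\circ\Phi_h$, and the paper proves a clean equivalence (its Lemma~\ref{L4.2}): the density-$\ge c$ condition at level $0$ holds for all $r$-disks if and only if it holds at level $m$; there is no multiplicative error here. This lemma is not obvious, because ${\rm Exp}$ is infinite-to-one and one must track how the $r$-disks of the Siegel disk at one level correspond to those at the next (this is the content of Corollary~\ref{c3.1} and Lemma~\ref{p3.2}). Second, at a fixed deep level $m$ one compares $\mathcal R^m(g_n)$ with $\mathcal R^{m+1}(Q_{\beta_n})$: these lie in the same fiber $\mathcal{IS}_{\alpha_n^{(m)}}$ and, by Theorem~C, are at Teichm\"uller distance $\le\lambda^m M$. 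A holomorphic-motions argument (Lemma~\ref{L4.3}) produces a quasiconformal conjugacy $\phi_n$ of dilatation $\le e^{\lambda^m M}$ sending one Siegel disk to the other, and Astala's area-distortion theorem plus a density-preservation lemma (Lemma~\ref{L4.1}) convert the quadratic $1/2$ bound into a bound $c(K_m)/2^{K_m}$ for $\mathcal R^m(g_n)$, with $K_m\to 1$ as $m\to\infty$. Only this second step carries a multiplicative error, and it vanishes in the limit.

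So: keep your qc-transfer idea, drop the plan to re-run the Buff--Ch\'eritat proof, and make the two-step structure explicit. The piece you are genuinely missing is the level-equivalence lemma; without it you cannot cleanly move the density statement up and down the tower.
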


\section{Complex analytic structure and renormalization of the class of Inou-Shishikura\label{S5.2}}
Let
\[\mathcal{S}^{qc}(V):=\left\{\phi:V\to\mathbb{C}\left|\begin{matrix}\phi\ {\rm is\
univalent\ with}\ \phi(0)=0,\ \phi'(0)=1\\ {\rm and\ has\ a\ quasiconformal\ extension\ to}\ \mathbb{C}
\end{matrix}\right.\right\}.\]
Let $W=\mathbb{C}\setminus\overline{V}$, which is a punctured disk in $\hat{\mathbb{C}}$.
We denote by ${\rm Teich}(W)$ the standard Teichm\"uller space of $W$.
It is well known that ${\rm Teich}(W)$ is a complete metric space with respect to the Teichm\"uller metric
and has a complex Banach manifold structure due to Bers.
We define a map
$$R:\mathcal{S}^{qc}(V)\to {\rm Teich}(W),\ \phi\mapsto[\hat{\phi}|_W],$$
where $\hat{\phi}:\mathbb{C}\to\mathbb{C}$ is a quasiconformal extension of $\phi$.
According to \cite{IS}, $R$ is well defined and is a bijection.
Then for any $\phi_1,\phi_2\in\mathcal{S}^{qc}(V)$, we define
the distance $d_T(\phi_1,\phi_2)$ of $\phi_1$ and $\phi_2$ as the Teichm\"uller distance of $R(\phi_1)$ and $R(\phi_2)$. Following \cite{IS}, we have the following lemma.
\begin{lemma}[Inou and Shishikura]
\label{l2}If $\phi_n,\phi\in\mathcal{S}^{qc}(V)$ and $d_T(\phi_n,\phi)\to0${\rm(}$n\to+\infty${\rm)},
then $\phi_n$ converges to $\phi$ uniformly on every compact sets in $V$ as $n\to+\infty$.
A mapping $\tau$ from a complex manifold $\Lambda$ to ${\rm Teich}(W)$ is holomorphic if and only if
there exists a holomorphic function $\phi:\Lambda\times V\to\mathbb{C}$ such that
$\phi_{\lambda}:=\phi(\lambda,\cdot)\in\mathcal{S}^{qc}(V)$ and $R(\phi_{\lambda})=\tau(\lambda)$.
\end{lemma}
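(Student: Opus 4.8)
\noindent\emph{Proof strategy.} The plan is to work with three standard ingredients in addition to the bijectivity of $R$: (i) the Bers complex structure on ${\rm Teich}(W)$, for which the Bers projection $\pi\colon M(W)_{1}\to{\rm Teich}(W)$ from the open unit ball of Beltrami coefficients on $W$ is a holomorphic submersion admitting local holomorphic (in particular norm-continuous) sections; (ii) the Ahlfors--Bers theorem on continuous and holomorphic dependence of the normalized solution of the Beltrami equation on its coefficient; and (iii) S{\l}odkowski's extension theorem for holomorphic motions. I would fix the notation that, for $\mu\in M(W)_{1}$, $\widetilde{\mu}$ denotes the extension of $\mu$ by $0$ across $V$, and $g^{\mu}\colon\mathbb{C}\to\mathbb{C}$ the quasiconformal homeomorphism with $\bar{\partial}g^{\mu}=\widetilde{\mu}\,\partial g^{\mu}$ normalized by $g^{\mu}(0)=0$ and $(g^{\mu})'(0)=1$; this makes sense because $\widetilde{\mu}\equiv0$ near $0$, so $g^{\mu}$ is conformal there. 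Then $g^{\mu}|_{V}\in\mathcal{S}^{qc}(V)$ and $R(g^{\mu}|_{V})=[\mu]$, so by injectivity of $R$ one has $R^{-1}\colon[\mu]\mapsto g^{\mu}|_{V}$; this dictionary is what lets me transfer $d_{T}$-convergence and holomorphy on the Teichm\"uller side into convergence and holomorphy of the explicit maps $g^{\mu}$ on the Beltrami side.

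\noindent For the first assertion I would proceed as follows. From $d_{T}(\phi_{n},\phi)\to0$, i.e. $R(\phi_{n})\to R(\phi)$ in ${\rm Teich}(W)$, pick a local holomorphic section $s$ of $\pi$ near $R(\phi)$; for large $n$ the points $R(\phi_{n})$ lie in its domain, and continuity of $s$ gives $\nu_{n}:=s(R(\phi_{n}))\to\nu_{0}:=s(R(\phi))$ in $L^{\infty}(W)$, hence $\widetilde{\nu}_{n}\to\widetilde{\nu}_{0}$ in $L^{\infty}(\mathbb{C})$. By the Ahlfors--Bers continuity theorem, $g^{\nu_{n}}\to g^{\nu_{0}}$ locally uniformly on $\mathbb{C}$. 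Since $[\nu_{n}]=\pi(s(R(\phi_{n})))=R(\phi_{n})$, injectivity of $R$ forces $g^{\nu_{n}}|_{V}=\phi_{n}$ and, likewise, $g^{\nu_{0}}|_{V}=\phi$; restricting the locally uniform convergence to $V$ then yields $\phi_{n}\to\phi$ uniformly on compact subsets of $V$.

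\noindent For the ``only if'' part of the second assertion I would start from a holomorphic $\tau$ and, near each $\lambda_{0}\in\Lambda$, compose $\tau$ with a local holomorphic section of $\pi$ to obtain a holomorphic lift $\lambda\mapsto\mu_{\lambda}\in M(W)_{1}$ with $[\mu_{\lambda}]=\tau(\lambda)$. By the Ahlfors--Bers theorem $(\lambda,z)\mapsto g^{\mu_{\lambda}}(z)$ is jointly holomorphic, so $\phi(\lambda,z):=g^{\mu_{\lambda}}(z)$, $z\in V$, is holomorphic with $\phi_{\lambda}=g^{\mu_{\lambda}}|_{V}\in\mathcal{S}^{qc}(V)$ and $R(\phi_{\lambda})=\tau(\lambda)$. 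On the overlap of two such neighbourhoods the two locally defined holomorphic families coincide, since both equal $R^{-1}(\tau(\lambda))$ by injectivity of $R$; hence they glue to a global holomorphic $\phi\colon\Lambda\times V\to\mathbb{C}$ with the required properties.

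\noindent The ``if'' part is the step I expect to be the main obstacle: from a holomorphic family $\phi_{\lambda}\in\mathcal{S}^{qc}(V)$ with $R(\phi_{\lambda})=\tau(\lambda)$ one must produce a holomorphic family of quasiconformal \emph{extensions} (equivalently of Beltrami coefficients on $W$), and no naive extension procedure is holomorphic in the parameter. Since holomorphy of $\tau$ can be tested along holomorphic disks, I would reduce to $\Lambda=\mathbb{D}$, fix $\lambda_{0}$ and a quasiconformal extension $\widehat{\phi}_{\lambda_{0}}$ of $\phi_{\lambda_{0}}$ with Beltrami coefficient supported on $W$. The key point is that, since every $\phi_{\lambda}$ is injective, $(\lambda,w)\mapsto\phi_{\lambda}(\phi_{\lambda_{0}}^{-1}(w))$, together with the constant orbit of $\infty$, is a holomorphic motion of $\phi_{\lambda_{0}}(V)\cup\{\infty\}$ over $\mathbb{D}$ based at $\lambda_{0}$; by S{\l}odkowski's theorem it extends to a holomorphic motion $H\colon\mathbb{D}\times\widehat{\mathbb{C}}\to\widehat{\mathbb{C}}$ with each $H_{\lambda}$ quasiconformal, $H_{\lambda}(\infty)=\infty$, and $\lambda\mapsto\mu_{H_{\lambda}}$ holomorphic into $M(\widehat{\mathbb{C}})_{1}$. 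On the open set $\phi_{\lambda_{0}}(V)$ the map $H_{\lambda}$ agrees with the conformal map $\phi_{\lambda}\circ\phi_{\lambda_{0}}^{-1}$, hence is conformal there, so $\widehat{\phi}_{\lambda}:=H_{\lambda}\circ\widehat{\phi}_{\lambda_{0}}$ is a quasiconformal extension of $\phi_{\lambda}$ whose Beltrami coefficient $\mu_{\lambda}$ --- read off from the composition formula with $\widehat{\phi}_{\lambda_{0}}$ fixed --- vanishes on $V$, depends holomorphically on $\lambda$ as an element of $M(W)_{1}$, and satisfies $[\mu_{\lambda}]=R(\phi_{\lambda})=\tau(\lambda)$. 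Then $\tau=\pi\circ(\lambda\mapsto\mu_{\lambda})$ is holomorphic near $\lambda_{0}$, hence on all of $\Lambda$. The points needing care are precisely that a holomorphic extension has to be obtained by upgrading $\lambda\mapsto\phi_{\lambda}\circ\phi_{\lambda_{0}}^{-1}$ to a holomorphic motion and invoking S{\l}odkowski (or Bers--Royden), and that the puncture at $\infty$ and the normalization at $0$ must be controlled so that the $\widehat{\phi}_{\lambda}$ are quasiconformal extensions in the sense that defines $\mathcal{S}^{qc}(V)$.
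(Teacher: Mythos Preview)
The paper does not give its own proof of this lemma: it is stated as a result of Inou and Shishikura, with the sentence ``Following \cite{IS}, we have the following lemma'' and no argument supplied. So there is no in-paper proof to compare your proposal against; you are supplying the argument the paper omits by citation.

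Your strategy is sound and essentially the standard one. The first assertion is exactly as you describe: a local continuous section of the Bers projection $\pi\colon M(W)_1\to{\rm Teich}(W)$ converts $d_T$-convergence into $L^\infty$-convergence of Beltrami coefficients, and Ahlfors--Bers then gives locally uniform convergence of the normalized solutions; injectivity of $R$ identifies these solutions with the $\phi_n$. The ``only if'' direction is likewise correct, and your observation that the locally constructed families glue because $R$ is injective is the right reason.

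For the ``if'' direction your plan is also correct, and your identification of this as the delicate step is accurate. Two small remarks. First, since holomorphy is local, Bers--Royden (which the paper already records as Lemma~\ref{l1}, in its stronger form giving a canonical harmonic-Beltrami extension depending holomorphically on the parameter) suffices in place of S{\l}odkowski: one only needs the extension over a small disk around each $\lambda_0$. Second, the holomorphic dependence of $\mu_\lambda=\mu_{H_\lambda\circ\widehat{\phi}_{\lambda_0}}$ on $\lambda$ follows from the composition formula for Beltrami coefficients, which is a M\"obius transformation in $\mu_{H_\lambda}\circ\widehat{\phi}_{\lambda_0}$ with $\lambda$-independent coefficients; it is worth making this explicit, since it is the point where holomorphy in $\lambda$ of the extended motion is actually converted into holomorphy of $\lambda\mapsto\mu_\lambda\in M(W)_1$. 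With these details filled in, your argument is complete.
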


Next, we introduce the near-parabolic renormalization on perturbations of the class of Inou-Shishikura.
According to Inou and Shishikura\upcite{IS},
there exists an $\alpha^*>0$ such that for any $0<\alpha<\alpha^*$ and $h=P\comp\phi^{-1}\in\mathcal{IS}_0$
with $\phi(0)=0$ and $\phi'(0)=1$, the following four properties hold.

\vspace{0.1cm}
\noindent(1) $h(e^{2\pi i\alpha}z)$ has a non-zero
fixed point $\sigma_{\alpha}$ near $0$ in $e^{-2\pi i\alpha}\phi(V)$. The fixed point $\sigma_{\alpha}$ depends continuously on $\alpha$, and has asymptotic expansion $\sigma_{\alpha}=-4\pi\alpha i/h''(0)+o(\alpha)$,
when $\alpha$ converges to 0 in a fixed neighborhood of $0$.

\vspace{0.1cm}
Let
\[Q_{\alpha}(z):=e^{2\pi i\alpha}z+\frac{27}{16}e^{4\pi i\alpha}z^2.\]
We observe that the quadratic polynomial $Q_{\alpha}$ has only finite critical value at $-4/27$.
Let $g(z)=h(e^{2\pi i\alpha}z)$ or $g=Q_{\alpha}$. Then $g$ has only finite critical value at $-4/27$
in its domain ${\rm Def}(g)$, where ${\rm Def}(g)=e^{-2\pi i\alpha}\phi(V)$ if $g(z)=h(e^{2\pi i\alpha}z)$
and ${\rm Def}(g)=\mathbb{C}$ if $g=Q_{\alpha}$. Denote by $\sigma_g$ the nonzero fixed point of $g$ and
by $c_g$ the critical point of $g$ in ${\rm Def}(g)$.

\vspace{0.1cm}
\noindent(2) There exist a domain $\mathcal{P}_g\subset{\rm Def}(g)$ and a univalent map $\Phi_g:\mathcal{P}_g\to\mathbb{C}$ satisfying the following properties:

\begin{itemize}
\item The domain $\mathcal{P}_g$ is bounded by piecewise smooth curves and is compactly contained in
${\rm Def}(g)$. Moreover, it contains $c_g$, $0$ and $\sigma_g$ on its boundary.
\item ${\rm Im}\ \Phi_g(z)\to+\infty$ when $z\in\mathcal{P}_g\to0$, and ${\rm Im}\ \Phi_g(z)\to-\infty$ when $z\in\mathcal{P}_g\to\sigma_g$.
\item $\Phi_g(g(z))=\Phi_g(z)+1$, whenever $z$ and $g(z)$ belong to $\mathcal{P}_g$.
\item $\Phi_g$ is uniquely determined by the above conditions together with normalization $\Phi_g(c_g)=0$. Moreover, the normalized $\Phi_g$ depends continuously on $g$.
\end{itemize}
\noindent Furthermore, it is proved in \cite{BC} that there is a positive integer ${\bf k}$, independent of $g$,
such that
\begin{itemize}
\item $\Phi_g(\mathcal{P}_g)=\{w\in\mathbb{C}|0<{\rm Re}(w)<1/\alpha-{\bf k}\}$.
\end{itemize}

The map $\Phi_g:\mathcal{P}_g\to\mathbb{C}$ is called the perturbed Fatou coordinate of $g$ and $\mathcal{P}_g$ is called the perturbed petal of $g$. Define
\begin{align*}
\mathcal{C}_g&:=\{z\in\mathcal{P}_g:1/2\leq {\rm Re}(\Phi_g(z))\leq 3/2, -2\leq{\rm Im}(\Phi_g(z))\leq2\},\\
\mathcal{C}_g^{\#}&:=\{z\in\mathcal{P}_g:1/2\leq {\rm Re}(\Phi_g(z))\leq 3/2, {\rm Im}(\Phi_g(z))\geq2\}.
\end{align*}
By definition, $-4/27\in{\rm int}(\mathcal{C}_g)$ and $0\in\partial(\mathcal{C}_g^{\#})$.
We call the arc
$$\{z\in\mathcal{P}_g:{\rm Re}(\Phi_g(z))=1/2, {\rm Im}(\Phi_g(z))\geq-2\}$$
the left boundary
of $\mathcal{C}_g\cup\mathcal{C}_g^{\#}$, the arc
$$\{z\in\mathcal{P}_g:{\rm Re}(\Phi_g(z))=3/2, {\rm Im}(\Phi_g(z))\geq-2\}$$
the right boundary
of $\mathcal{C}_g\cup\mathcal{C}_g^{\#}$, the arc
$$\{z\in\mathcal{P}_g:1/2\leq{\rm Re}(\Phi_g(z))\leq3/2, {\rm Im}(\Phi_g(z))=-2\}$$
the down end of $\mathcal{C}_g\cup\mathcal{C}_g^{\#}$ and the point 0 the `0' end of
$\mathcal{C}_g\cup\mathcal{C}_g^{\#}$ respectively.

\vspace{0.1cm}
\noindent(3) It is proved in \cite{BC} that there exists a smallest positive integer ${\bf k}_1$, independent of $g$, with the following properties:
\begin{itemize}
\item For every integer $k$, with $0\leq k\leq{\bf k}_1$, there exists a unique connected component of $g^{-k}(\mathcal{C}_g^{\#})$ which is compactly contained in ${\rm Def}(g)$, and contains $0$ on its boundary.
    We denote by $(\mathcal{C}_g^{\#})^{-k}$ this component.
\item For every integer $k$, with $0\leq k\leq{\bf k}_1$, there exists a unique connected component of $g^{-k}(\mathcal{C}_g)$ which has non-empty intersection with $(\mathcal{C}_g^{\#})^{-k}$, and is compactly contained in ${\rm Def}(g)$. This component is denoted by $\mathcal{C}_g^{-k}$.
\item The sets $\mathcal{C}_g^{-{\bf k}_1}$ and $(\mathcal{C}_g^{\#})^{-{\bf k}_1}$ are contained in
\[\{z\in\mathcal{P}_g|2<{\rm Re}\ \Phi_g(z)<1/\alpha-{\bf k}-2\}.\]
\item The maps $g:\mathcal{C}_g^{-k}\to\mathcal{C}_g^{-k+1}$ ($2\leq k\leq{\bf k}_1$) and $g:(\mathcal{C}_g^{\#})^{-k}\to(\mathcal{C}_g^{\#})^{-k+1}$ ($1\leq k\leq{\bf k}_1$) are univalent.
    The map $g:\mathcal{C}_g^{-1}\to\mathcal{C}_g$ is a degree two branched covering.
\end{itemize}

Define
\[\mathcal{D}_g^{-{\bf k}_1}:=\mathcal{C}_g^{-{\bf k}_1}\cup(\mathcal{C}_g^{\#})^{-{\bf k}_1}\
{\rm and}\
{\rm Exp}(\zeta):=s(\frac{-4}{27}e^{2\pi i\zeta}),\]
where $s(z)=\overline{z}$. The renormalization of $g$ is defined by
the map
\[\mathcal{R}(g)(z)={\rm Exp}\comp\Phi_g\comp g^{{\bf k}_1}\comp\Phi_g^{-1}\comp{\rm Exp}^{-1}(z),\
z\in{\rm Exp}\comp\Phi_g(\mathcal{D}_g^{-{\bf k}_1}).\]
It is easy to check that $\mathcal{R}(g)$ is well-defined, can be extended at $0$, has the form
$e^{2\pi\frac{1}{\alpha}i}z+O(z^2)$ near $0$ and the critical value at $-4/27$.

\vspace{0.1cm}
\noindent(4) There exists a Jordan domain $U\supset\overline{V}$(independent of $g$) and a univalent map $\phi:U\to\mathbb{C}$ with $\phi(0)=0$ and $\phi'(0)=1$
such that
\[\mathcal{R}(g)(z)=P\comp\phi^{-1}(e^{2\pi i\frac{1}{\alpha}}z),\
z\in e^{-2\pi i\frac{1}{\alpha}}\phi(U).\]

The restriction $\mathcal{R}(g)$ to $e^{-2\pi i\frac{1}{\alpha}}\phi(V)$ is called the near-parabolic renormalization
of $g$ by Inou and Shishikura. And it is clear that
$\mathcal{R}(g)(e^{-2\pi i\frac{1}{\alpha}}z)$ ($z\in\phi(V)$) belongs to $\mathcal{IS}_0$.
Following \cite{IS}, we have the following theorem.

\begin{theoremc}[Inou and Shishikura]
\label{T2.1}There exists $0<\lambda<1$ such that for all $\alpha\in(0,\alpha^*]$ and
$h_1,h_2\in\mathcal{IS}_0$ with $h_1=P\comp\psi_1^{-1}$ and $h_2=P\comp\psi_2^{-1}$, we have
\[d_T(\phi_1,\phi_2)\leq\lambda d_T(\psi_1,\psi_2),\]
where
$\mathcal{R}(h_1\comp e^{2\pi i\alpha})=P\comp\phi_1^{-1}\comp e^{2\pi i\frac{1}{\alpha}}$
and
$\mathcal{R}(h_2\comp e^{2\pi i\alpha})=P\comp\phi_2^{-1}\comp e^{2\pi i\frac{1}{\alpha}}.$
\end{theoremc}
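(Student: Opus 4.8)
The plan is to realise the near-parabolic renormalization as a holomorphic self-map of a Teichmüller space and to deduce the contraction from the Earle--Hamilton fixed point theorem, the engine being the domain-enlargement property~(4) of Inou and Shishikura. Fix $\alpha\in(0,\alpha^*]$. Using the bijection $R:\mathcal{S}^{qc}(V)\to\mathrm{Teich}(W)$, I would define the induced operator
\[\widetilde{\mathcal R}_\alpha:\mathrm{Teich}(W)\to\mathrm{Teich}(W),\qquad [\hat\psi|_W]\mapsto[\hat\phi|_W],\]
where $\phi$ is determined by $P\comp\phi^{-1}\comp e^{2\pi i/\alpha}=\mathcal R\big((P\comp\psi^{-1})\comp e^{2\pi i\alpha}\big)$. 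Theorem~C is exactly the assertion that $\widetilde{\mathcal R}_\alpha$ contracts the Teichmüller metric of $\mathrm{Teich}(W)$ by a factor $\lambda<1$ that is independent of $\alpha$ and of the two points.

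The first step is to prove that $\psi\mapsto\phi$ is holomorphic. By the second part of Lemma~\ref{l2} it is enough to show that for every holomorphic family $\psi_\lambda\in\mathcal{S}^{qc}(V)$ the corresponding maps $\phi_\lambda$ form a holomorphic family of univalent maps. This follows by unwinding the construction of $\mathcal R(g)$: the nonzero fixed point $\sigma_g$ of property~(1), the perturbed petal $\mathcal P_g$ and the perturbed Fatou coordinate $\Phi_g$ of property~(2) move holomorphically with $g$ (the normalization $\Phi_g(c_g)=0$ making $\Phi_g$ canonical); forming the finite iterate $g^{\mathbf{k}_1}$, conjugating by $\Phi_g$, and post-composing with $\mathrm{Exp}$ all preserve holomorphic dependence; and $\phi$ is finally recovered near $0$ from $\mathcal R(g)=P\comp\phi^{-1}\comp(\text{rotation})$ by a fixed branch of $P^{-1}$, hence depends holomorphically as well. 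Thus $\widetilde{\mathcal R}_\alpha$ is holomorphic, and since the Teichmüller metric of $\mathrm{Teich}(W)$ coincides with its Kobayashi metric (Royden--Gardiner), $\widetilde{\mathcal R}_\alpha$ is at least distance non-increasing.

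The decisive step is to control the image. By property~(4) the output coordinate $\phi$ extends univalently, with a quasiconformal extension to $\mathbb C$, to the \emph{fixed} Jordan domain $U\supsetneq\overline V$, which is independent of $g$ and of $\alpha$. Hence the image of $\widetilde{\mathcal R}_\alpha$ is contained in the fixed set
\[\mathcal U:=R\big(\{\,\phi|_V:\phi\ \text{univalent on}\ U,\ \phi(0)=0,\ \phi'(0)=1,\ \phi\ \text{has a qc extension to}\ \mathbb C\,\}\big)\subset\mathrm{Teich}(W).\]
I would then argue that $\mathcal U$ is relatively compact in $\mathrm{Teich}(W)$: the Koebe distortion theorem bounds $\phi$ and $1/\phi'$ uniformly on $\overline V$, so $\{\phi|_V\}$ is a normal family, and — invoking the compactness properties of the Inou--Shishikura class — the quasiconformal extensions can be chosen with uniformly bounded dilatation, which upgrades normality to $d_T$-precompactness. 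Consequently $\overline{\mathcal U}$ is a fixed compact subset of $\mathrm{Teich}(W)$, and in the Bers embedding of $\mathrm{Teich}(W)$ as a bounded domain in a complex Banach space it lies at a positive distance $\varepsilon$ from the boundary.

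Finally, $\widetilde{\mathcal R}_\alpha$ is a holomorphic self-map of this bounded domain whose image lies inside the fixed set $\mathcal U$, strictly interior to $\mathrm{Teich}(W)$; by the Earle--Hamilton theorem there is a constant $\lambda=\lambda(\mathrm{diam}\,\mathcal U,\varepsilon)<1$, depending only on $\mathcal U$ hence on $U$ alone, such that $\widetilde{\mathcal R}_\alpha$ contracts the Kobayashi — equivalently Teichmüller — metric by $\lambda$. Unwinding $R$ gives $d_T(\phi_1,\phi_2)\le\lambda\,d_T(\psi_1,\psi_2)$ for all $\alpha\in(0,\alpha^*]$ and $h_1,h_2\in\mathcal{IS}_0$. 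I expect the main obstacle to be the analytic bookkeeping in the first and third steps: making the dependence of the perturbed Fatou coordinate on $g$ precise in the Banach-manifold sense, and upgrading the Montel-type normality of $\{\phi|_V\}$ to genuine relative compactness in the Teichmüller topology with bounds uniform in $\alpha$ — it is precisely the $\alpha$-independence of the enlarged domain $U$ in property~(4) that forces $\lambda$ to be independent of $\alpha$.
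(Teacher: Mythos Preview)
The paper does not supply its own proof of Theorem~C; the result is quoted from Inou and Shishikura~\cite{IS}. Your outline is precisely their argument: identify $\mathcal{IS}_0$ with $\mathrm{Teich}(W)$ via $R$, verify that the fibrewise renormalization $\widetilde{\mathcal R}_\alpha$ is holomorphic (Lemma~\ref{l2} reduces this to holomorphic dependence of the perturbed Fatou coordinate), use the domain-enlargement property~(4) to force the image into a fixed subset independent of $\alpha$, and conclude via Royden--Gardiner together with a Schwarz--Pick / Earle--Hamilton contraction.

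The one caveat you already flag is the genuine technical point: in infinite dimensions, Montel-type normality of $\{\phi|_V\}$ in the compact-open topology does not by itself yield $d_T$-precompactness of $\mathcal U$ in $\mathrm{Teich}(W)$. Inou and Shishikura handle this not by proving compactness directly but by factoring $\widetilde{\mathcal R}_\alpha$ through $\mathcal{S}^{qc}(U)\cong\mathrm{Teich}(\mathbb C\setminus\overline U)$ and observing that the holomorphic restriction map $\mathrm{Teich}(\mathbb C\setminus\overline U)\to\mathrm{Teich}(W)$ already lands in a fixed Teichm\"uller ball (since any $\phi$ univalent on $U$ has a qc extension whose Beltrami coefficient vanishes on the collar $U\setminus\overline V$, which bounds the Teichm\"uller distance to the basepoint uniformly). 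This is a repackaging of your third step rather than a different idea, and with it the Earle--Hamilton / Schwarz lemma argument goes through with $\lambda$ depending only on the pair $(V,U)$, hence uniform in $\alpha$.
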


\vspace{0.1cm}
\noindent For convenience, we give some notations as the following.
\begin{itemize}
\item Let $N$ be a positive integer such that $0<1/N<\alpha^*$ and $N>{\bf k}+{\bf k}_1+1$.
\item $\mathcal{IS}_N=\{h\comp e^{2\pi i\alpha}|h\in\mathcal{IS}_0\ {\rm and}\ \alpha\in\mathbb{HT}_N\}$.
\item For all $f\in\mathcal{IS}_N$, let $0<\alpha(f)<1$ denote the rotation number of $f$ at the origin.
Then there exists $\psi\in\mathcal{S}^{qc}$ such that $f=P\comp\psi^{-1}\comp e^{2\pi i\alpha(f)}$. We let $V_f$ denote $e^{-2\pi i\alpha(f)}\psi(V)$.
\item For all $\alpha\in\mathbb{HT}_N$, let $\mathcal{IS}_{\alpha}$ be the subset of $\mathcal{IS}_N$
      consisting of $f\in\mathcal{IS}_N$ with $\alpha(f)=\alpha$.
\item For all $f\in\mathcal{IS}_N\cup\{Q_{\alpha}\}$, the notation $c_f$ and $cv_f$ denote
      the only finite critical point and critical value $-4/27$ respectively,
      the notation $\sigma(f)$ denotes the only finite nonzero fixed point of $f$, the domain
      $\mathcal{P}_f$ denotes the perturbed Fatou petal and
      the map $\Phi_f$ denotes the perturbed Fatou coordinate.
\item For all $f,g\in\mathcal{IS}_N$ with $f=P\comp\psi_f^{-1}\comp e^{2\pi i\alpha(f)}$ and
      $g=P\comp\psi_g^{-1}\comp e^{2\pi i\alpha(g)}$, we let $d_T(f,g)=d_T(\psi_f,\psi_g)$.
\item For all $f\in\mathcal{IS}_N$, let $\mathcal{O}_f$ denote the postcritical set of $f$ in $V_f$, i.e.,
      \[\mathcal{O}_f=\overline{\{w\in V_f|\exists\ n\geq1\ s.t.\ f^n(c_f)=w\ \&\
      f^k(c_f)\in V_f\ (1\leq k\leq n)\}}.\]
      If $f$ is a quadratic polynomial, $\mathcal{O}_f$ denote the postcritical set of $f$ in $\mathbb{C}$.
\item For all Brjuno number $\alpha\in\mathbb{HT}_N$, if $f\in\mathcal{IS}_{\alpha}$,
      the notation $\Delta_f$ denotes the Siegel disk of $f$ centering at 0 in $V_f$ and if $f=Q_{\alpha}$,
      the notation $\Delta_f$ denotes the Siegel disk of $f$ centering at 0.
\end{itemize}

\vspace{0.1cm}
\noindent At the end of this section we give the following two lemmas.
\begin{lemma}[\cite{BC}]
\label{BCL1}Assume that $\alpha\in\mathbb{HT}_N$ is a Brjuno number and $h\in\mathcal{IS}_{\alpha}\cup\{Q_{\alpha}\}$.
Let $w\in V_{\mathcal{R}(h)}$ and $w'=\mathcal{R}(h)(w)$.
If $z\in\mathcal{P}_h$ and $z'\in\mathcal{P}_h$ satisfy
\[{\rm Exp}\comp\Phi_h(z)=w\ and\ {\rm Exp}\comp\Phi_h(z')=w',\]
then there is an integer $k\geq1$ such that $z'=h^k(z)$ and $h^l(z)\in{\rm Def}(h)$ for $l=0,1,\cdots,k$.
\end{lemma}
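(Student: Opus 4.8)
I would prove this by reading the definition of $\mathcal R(h)$ as a semiconjugacy through the map $\pi_h:={\rm Exp}\comp\Phi_h$ (defined on $\mathcal P_h$) and then lifting the one-step relation $w'=\mathcal R(h)(w)$ to an $h$-orbit relation. The starting point is two identities, both immediate from the listed properties of $\Phi_h$ together with the fact that ${\rm Exp}(\zeta)=s(\frac{-4}{27}e^{2\pi i\zeta})$ is $1$-periodic with ${\rm Exp}(\zeta_1)={\rm Exp}(\zeta_2)$ precisely when $\zeta_1-\zeta_2\in\mathbb Z$: \emph{(E1)} if $z,h(z),\dots,h^m(z)$ all lie in $\mathcal P_h$, then $\Phi_h(h^m(z))=\Phi_h(z)+m$ and hence $\pi_h(h^m(z))=\pi_h(z)$; \emph{(E2)} for every $z\in\mathcal D_h^{-{\bf k}_1}$ one has $\mathcal R(h)(\pi_h(z))=\pi_h(h^{{\bf k}_1}(z))$, which is just the definition of $\mathcal R(h)$ once one notes that, by construction, $\Phi_h^{-1}\comp{\rm Exp}^{-1}$ is the inverse of $\pi_h|_{\mathcal D_h^{-{\bf k}_1}}$.

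With these in hand I would first settle a distinguished pair of lifts. Since $V_{\mathcal R(h)}$ is contained in the domain ${\rm Exp}\comp\Phi_h(\mathcal D_h^{-{\bf k}_1})$ of $\mathcal R(h)$, there is a unique $\tilde z\in\mathcal D_h^{-{\bf k}_1}$ with $\pi_h(\tilde z)=w$; put $\tilde z':=h^{{\bf k}_1}(\tilde z)$. Because $\tilde z\in\mathcal C_h^{-{\bf k}_1}\cup(\mathcal C_h^{\#})^{-{\bf k}_1}$, property (3) of the construction gives $h^{l}(\tilde z)\in\mathcal C_h^{-{\bf k}_1+l}\cup(\mathcal C_h^{\#})^{-{\bf k}_1+l}\Subset{\rm Def}(h)$ for $0\le l\le{\bf k}_1$, and $\tilde z'\in\mathcal C_h\cup\mathcal C_h^{\#}\subset\mathcal P_h$; moreover \emph{(E2)} yields $\pi_h(\tilde z')=\mathcal R(h)(w)=w'$. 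So the lemma already holds for $(\tilde z,\tilde z')$, with $k={\bf k}_1\ge1$, and everything is reduced to comparing an arbitrary pair $z,z'\in\mathcal P_h$ with $\pi_h(z)=w$, $\pi_h(z')=w'$ against $(\tilde z,\tilde z')$. For this I would use that, up to the explicitly described components $\mathcal C_h^{-j}\cup(\mathcal C_h^{\#})^{-j}$, the set $\mathcal D_h^{-{\bf k}_1}$ is the cross-section through which forward $h$-orbits leave $\mathcal P_h$ and $\mathcal C_h\cup\mathcal C_h^{\#}$ is the one through which they re-enter ${\bf k}_1$ steps later: following the forward $h$-orbit of $z$ inside $\mathcal P_h$ --- along which, by \emph{(E1)}, $\pi_h$ stays equal to $w$ --- one reaches $\tilde z$ after some $a\ge0$ steps, and following the backward $h$-orbit of $z'$ inside $\mathcal P_h$ --- along which $\pi_h$ stays equal to $w'$ --- one reaches $\tilde z'$ after some $b\ge0$ steps. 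Concatenating the segment $z\to\tilde z$, the bridge $\tilde z\to h^{{\bf k}_1}(\tilde z)=\tilde z'$ and the segment $\tilde z'\to z'$ gives $z'=h^{k}(z)$ with $k=a+{\bf k}_1+b\ge{\bf k}_1\ge1$ and with all of $z,h(z),\dots,h^k(z)$ in ${\rm Def}(h)$ (the outer segments lie in $\mathcal P_h\Subset{\rm Def}(h)$, the bridge in ${\rm Def}(h)$ by the previous paragraph). The only exceptional case, $w=w'$, forces $w$ to be a fixed point of $\mathcal R(h)$, hence $w=\sigma(\mathcal R(h))$ since $\pi_h$ never vanishes; there the distinguished lift is periodic and one takes $k$ to be its period.

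The serious obstacle is the geometric input invoked in the reduction step: the perturbed petal $\mathcal P_h$ is not forward invariant in any naive sense --- this is precisely why $h^{{\bf k}_1}$ on $\mathcal D_h^{-{\bf k}_1}$ is not translation by ${\bf k}_1$ in the coordinate $\Phi_h$ --- so one must check, from the explicit construction of $\mathcal P_h$, of its entrance/exit behaviour and of the components $\mathcal C_h^{-j}\cup(\mathcal C_h^{\#})^{-j}$ ($0\le j\le{\bf k}_1$), that every forward $h$-orbit in $\mathcal P_h$ leaves it exactly through $\mathcal D_h^{-{\bf k}_1}$ and re-enters, exactly ${\bf k}_1$ steps later, through $\mathcal C_h\cup\mathcal C_h^{\#}$; equivalently, that the lifts of $w$ (resp. of $w'$) lying in $\mathcal P_h$ form a single $h$-orbit segment terminating at $\tilde z$ (resp. issuing from $\tilde z'$). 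Granting this, the remaining steps are routine bookkeeping with the strip coordinate $\Phi_h$ and the $1$-periodicity of ${\rm Exp}$.
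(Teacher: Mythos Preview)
The paper does not supply a proof of this lemma; it is stated with the attribution \cite{BC} and used as a black box, so there is no argument in the paper against which to compare your attempt.

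Your strategy is the natural one and is essentially what is done in \cite{BC}: fix the distinguished lift $\tilde z\in\mathcal D_h^{-{\bf k}_1}$ of $w$, set $\tilde z':=h^{{\bf k}_1}(\tilde z)\in\mathcal C_h\cup\mathcal C_h^{\#}$, observe from the very definition of $\mathcal R(h)$ that $\pi_h(\tilde z')=w'$, and then relate arbitrary lifts $z,z'$ to $\tilde z,\tilde z'$ via $h$-orbit segments inside $\mathcal P_h$. The obstacle you flag is genuine and is precisely where the work lies.

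One caution about your geometric picture: the phrasing that forward orbits ``leave $\mathcal P_h$ through $\mathcal D_h^{-{\bf k}_1}$'' is misleading. By property~(3), $\Phi_h(\mathcal D_h^{-{\bf k}_1})$ sits in $\{2<{\rm Re}<1/\alpha-{\bf k}-2\}$, i.e.\ well inside the strip $\Phi_h(\mathcal P_h)=\{0<{\rm Re}<1/\alpha-{\bf k}\}$, not at its right edge; so there can be lifts of $w$ in $\mathcal P_h$ with larger ${\rm Re}\,\Phi_h$ than $\tilde z$, and such lifts are not reached from $\tilde z$ by forward iteration within $\mathcal P_h$. A cleaner way to organize the reduction is to use that $\Phi_h$ is a bijection onto the strip: the lifts of $w$ in $\mathcal P_h$ are exactly the points $\Phi_h^{-1}(\Phi_h(\tilde z)+j)$ for those integers $j$ keeping $\Phi_h(\tilde z)+j$ in the strip, and (after checking from the construction of $\mathcal P_h$ that consecutive such lifts are $h$-related) they form a single $h$-orbit segment ordered by ${\rm Re}\,\Phi_h$. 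Likewise for $w'$. Then the inequality ${\rm Re}\,\Phi_h(\tilde z')\in[1/2,3/2]$ pins $\tilde z'$ at (or one step past) the left end of the $w'$-segment, and combining this with the location of $\tilde z$ and the ${\bf k}_1$-step bridge yields $k\ge1$. Your one-line handling of the degenerate case $w=w'$ also deserves more care.
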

\begin{lemma}[\cite{IS}]
\label{L3.1}For any map $f\in\mathcal{IS}_N$, the critical point is
non-escaping{\rm(}i.e., $f^n(c_f)\in V_f$, $n=0,1,...${\rm)}
and stays away from the boundary of $V_f$. Thus, the postcritical set $\mathcal{O}_f$ is compactly
contained in $V_f${\rm(}uniformly over $\mathcal{IS}_N${\rm)}.
In general, the critical point $c_f$ is not eventually periodic.
\end{lemma}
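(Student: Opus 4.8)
The plan is to recover this statement of Inou and Shishikura from the renormalization structure (1)--(4) above, by following the critical orbit through the infinite renormalization tower.

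\emph{First step: one renormalization captures the critical orbit.} Write $f=h\comp e^{2\pi i\alpha}$ with $h=P\comp\phi^{-1}\in\mathcal{IS}_0$ and $\alpha=[0;a_1,a_2,\dots]\in\mathbb{HT}_N$, so that $0<\alpha<1/N<\alpha^*$ and ${\rm Def}(f)=V_f$. Since $f(c_f)=cv_f=-4/27\in{\rm int}(\mathcal{C}_f)\subset\mathcal{P}_f$ and the Fatou coordinate is normalized by $\Phi_f(c_f)=0$, the equation $\Phi_f\comp f=\Phi_f+1$ forces ${\rm Re}\,\Phi_f(f^n(c_f))=n$ as long as the orbit stays in $\mathcal{P}_f$. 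Using the explicit combinatorics in (2)--(3) of $\mathcal{P}_f$, $\mathcal{C}_f$, $\mathcal{C}_f^{\#}$ and of the preimage domains $\mathcal{C}_f^{-k},(\mathcal{C}_f^{\#})^{-k}$ ($0\le k\le{\bf k}_1$) --- all compactly contained in ${\rm Def}(f)=V_f$, with ${\bf k},{\bf k}_1$ independent of $f$ --- I would check that the forward orbit of $c_f$ stays inside $\mathcal{P}_f\cup\bigcup_{0\le k\le{\bf k}_1}\big(\mathcal{C}_f^{-k}\cup(\mathcal{C}_f^{\#})^{-k}\big)\subset V_f$ until it first lands in $\mathcal{D}_f^{-{\bf k}_1}=\mathcal{C}_f^{-{\bf k}_1}\cup(\mathcal{C}_f^{\#})^{-{\bf k}_1}$, after at most of order $1/\alpha$ steps. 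There the definition of $\mathcal{R}(f)$ together with Lemma \ref{BCL1} take over: $\pi_f:={\rm Exp}\comp\Phi_f$ semiconjugates the action of $f^{{\bf k}_1}$ on $\mathcal{D}_f^{-{\bf k}_1}$ to $\mathcal{R}(f)$ on a domain containing $V_{\mathcal{R}(f)}$, and in fact $\pi_f$ maps the forward orbit of $cv_f$ under $f$ onto the forward orbit of $cv_{\mathcal{R}(f)}=-4/27$ under $\mathcal{R}(f)$, each $f$-orbit point being a $\pi_f$-preimage inside $\mathcal{P}_f$ of an $\mathcal{R}(f)$-orbit point, or lying in one of the finitely many regions $\mathcal{C}_f^{-k},(\mathcal{C}_f^{\#})^{-k}$ crossed during a bottleneck passage.

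\emph{Second step: iterate.} Because $\alpha\in\mathbb{HT}_N$, the renormalization $\mathcal{R}(f)$ again lies in $\mathcal{IS}_N$, with rotation number $\alpha_1=[0;a_2,a_3,\dots]\in\mathbb{HT}_N$ (the fractional part of $1/\alpha$); so the tower $f_0=f$, $f_{n+1}=\mathcal{R}(f_n)\in\mathcal{IS}_N$, with $f_n$ of rotation number $\alpha_n=[0;a_{n+1},a_{n+2},\dots]$ and $0<\alpha_n<1/N<\alpha^*$ for every $n$, continues forever, and Theorem C shows it is well behaved under iteration. Applying the first step at each level and composing the semiconjugacies, the entire forward orbit of $c_f$ under $f$ is accounted for level by level; and since each $\pi_{f_j}^{-1}$ (the branch supplied by Lemma \ref{BCL1}) sends the level $(j+1)$ petal into $\mathcal{P}_{f_j}$, the whole orbit in fact lies in the single level-$0$ region $\mathcal{P}_f\cup\bigcup_{0\le k\le{\bf k}_1}(\mathcal{C}_f^{-k}\cup(\mathcal{C}_f^{\#})^{-k})\subset V_f$. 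In particular $f^n(c_f)\in V_f$ for all $n\ge0$: the critical point is non-escaping.

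\emph{Third step: uniformity, hence the a priori bound.} These constants are uniform over $\mathcal{IS}_N$. Indeed $h=P\comp\phi^{-1}\in\mathcal{IS}_0$ is determined by the univalent map $\phi:V\to\mathbb{C}$ with $\phi(0)=0,\phi'(0)=1$, and such maps form a normal family on the fixed domain $V$; by property (4), $\mathcal{R}(f_n)$ equals $P\comp\psi^{-1}\comp e^{2\pi i\alpha_{n+1}}$ with $\psi$ univalent on the fixed larger domain $U\supsetneq\overline V$, so $\overline{V_{f_n}}$ stays at a definite distance inside ${\rm Def}(f_n)$, uniformly in $n$. Together with the continuous dependence on $g$ of $\mathcal{P}_g,\mathcal{C}_g,\mathcal{C}_g^{\#},\mathcal{C}_g^{-k},(\mathcal{C}_g^{\#})^{-k}$ and their uniform compact containment in ${\rm Def}(g)$ over $\mathcal{IS}_N\cup\{Q_\alpha\}$ --- including the limiting parabolic case $\alpha\to0$, to which the Inou--Shishikura construction is uniform --- and with Lemma \ref{l2}, this gives a single $\delta>0$ with ${\rm dist}(f^n(c_f),\partial V_f)\ge\delta$ for all $n$ and all $f\in\mathcal{IS}_N$. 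Passing to the closure, $\mathcal{O}_f\Subset V_f$ uniformly over $\mathcal{IS}_N$.

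\emph{Fourth step, and the main obstacle.} If $f^{m+p}(c_f)=f^m(c_f)$ for some $m\ge0,\ p\ge1$, then $\mathcal{O}_f$ would be finite; but $\alpha\in\mathbb{HT}_N$ is irrational, so $0$ is an irrationally indifferent fixed point of $f$, and either the boundary of the Siegel disk $\Delta_f$ is contained in $\overline{\mathcal{O}_f}$ (Brjuno case, by the polynomial-like version of the Fatou--Ma\~n\'e theorem) or $\overline{\mathcal{O}_f}$ accumulates on the Cremer point $0$ --- in either case $\overline{\mathcal{O}_f}$ is infinite, a contradiction. (Equivalently, the strictly nested scales $\mathcal{P}_{f_0}\supset\pi_{f_0}^{-1}(\mathcal{P}_{f_1})\supset\cdots$ each contain a distinct point of the orbit of $c_f$.) I expect the third step to be the real difficulty: the \emph{uniform} a priori bound is in essence equivalent to the two hardest theorems of Inou and Shishikura --- that near-parabolic renormalization maps perturbations of $\mathcal{IS}_0$ back into $\mathcal{IS}_0$, and that the renormalized maps extend to the strictly larger domain $U$, which is what keeps the post-renormalization class inside a compact family. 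Granting those, the first two steps are careful but routine bookkeeping of the critical orbit through the tower, and the fourth is soft.
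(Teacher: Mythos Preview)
The paper gives no proof of this lemma; it is quoted directly from Inou--Shishikura \cite{IS}. Your outline --- deduce the non-escaping and the uniform a priori bound from the renormalization invariance (property~(4)), the petal/Fatou-coordinate structure (properties~(2)--(3)), and the orbit-lifting Lemma~\ref{BCL1} --- is precisely how this statement is extracted from the Inou--Shishikura machinery, and your diagnosis that step~3 carries the real content (it requires the full invariance theorem and the extension of $\mathcal{R}(f)$ to the strictly larger domain $U\supset\overline V$) is correct. One small correction to step~1: the bookkeeping is cleaner if you apply Lemma~\ref{BCL1} directly along the critical orbit of $\mathcal{R}(f)$, starting from the lift $cv_f$ of $cv_{\mathcal{R}(f)}$; this already covers all intermediate $f$-iterates (the lemma asserts $h^l(z)\in{\rm Def}(h)$ for $0\le l\le k$), and you need not track when the orbit ``first lands'' in $\mathcal{D}_f^{-{\bf k}_1}$.

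Step~4 has a genuine gap. Maps in $\mathcal{IS}_N$ are neither rational nor polynomial-like --- they are not proper from a disk onto a larger disk --- so neither the classical Fatou--Ma\~n\'e theorem nor its polynomial-like version is available; you cannot invoke $\partial\Delta_f\subset\overline{\mathcal{O}_f}$ or accumulation on a Cremer point that way. (Indeed, the paper's Lemma~\ref{L3.2}, which does prove $\partial\Delta_f\subset\mathcal{O}_f$ in this setting, \emph{uses} the present lemma, so citing it would be circular.) Your parenthetical alternative is also not an argument as written: every nested region $\pi_{f_0}^{-1}\cdots\pi_{f_{j-1}}^{-1}(\mathcal{P}_{f_j})$ already contains the single point $cv_f$, so ``each contains a point of the orbit'' produces no new points. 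A workable route goes through the tower directly: first observe that the only preimage of $0$ in $V_{f_j}$ is $0$ itself (since $-1\notin V$), so the critical orbit never hits $0$; next, if the critical orbit of $f$ were eventually periodic then so would that of every $f_j$, with non-increasing period $p_j$; finally, one shows the period must strictly decrease (each $\mathcal{R}(f_j)$-step corresponds, along the critical orbit, to a return through the full petal of $f_j$, hence to at least $a_{j+1}\ge N\ge 2$ steps of $f_j$), forcing eventual period~$1$, i.e.\ landing on a fixed point --- but $0$ is excluded and $\sigma_{f_j}$ lies outside the region $\mathcal{P}_{f_j}\cup\bigcup_k(\mathcal{C}_{f_j}^{-k}\cup(\mathcal{C}_{f_j}^{\#})^{-k})$ containing the orbit. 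The missing ingredient is a careful lower bound on the $k$ in Lemma~\ref{BCL1} along the critical orbit, which you have not supplied.
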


\section{Holomorphic motions}
Let $X$ be a complex analytic manifold with a base point $\lambda_0\in X$, and let $E\subset\hat{\mathbb{C}}$ be a subset.
A holomorphic motion of $E$, parametrized by $X$, is a map
$\phi:X\times E\to \hat{\mathbb{C}}$ such that
\begin{itemize}
\item for each $x\in E$, the map $\lambda\mapsto\phi(\lambda,x)$ is analytic,
\item for each $\lambda\in X$, the map $x\mapsto\phi(\lambda,x)$ is injective,
\item $\phi(\lambda_0,x)=x$.
\end{itemize}
Following \cite{BR}, we have the following lemma on holomorphic motions.

\begin{lemma}[Bers and Royden]
\label{l1}Let $E$ be a subset of $\hat{\mathbb{C}}$ and
$\phi:\mathbb{D}\times E\to\hat{\mathbb{C}}$ is a holomorphic motion of $E$ with a base point 0.
Then every $\phi(z,\cdot)$ is the restriction to $E$ of a quasiconformal self-map
$\hat{\phi}(z,\cdot)$ of $\hat{\mathbb{C}}$, of dilatation not exceeding
$K=\frac{1+|z|}{1-|z|}$.
\end{lemma}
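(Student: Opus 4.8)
The plan is to reduce the statement to the classical finite case and then pass to a limit, so that the sharp constant falls out of the coincidence of the Teichm\"uller and Kobayashi metrics on a finite-dimensional Teichm\"uller space. Since pre- and post-composition with a conformal map never changes a dilatation, after conjugating the whole motion by a M\"obius transformation depending holomorphically on $z$ one may assume one of two things: either $E$ has at most two points, or $E$ contains three distinguished points that are fixed by the entire motion, $\phi(z,a)=a$ for all $z\in\mathbb{D}$ and each such $a$. If $|E|\le 2$ one simply interpolates the motion by M\"obius transformations depending holomorphically on $z$ and equal to the identity at $z=0$ (such a family exists because $\mathbb{D}$ is simply connected); these have dilatation $1\le K$, so from now on assume $|E|\ge 3$ with three fixed points.

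First I would treat the case of a finite set $E=\{x_1,\dots,x_n\}$, $n\ge 3$, with $x_1,x_2,x_3$ fixed by the motion. For $z\in\mathbb{D}$ put $\Omega_z:=\hat{\mathbb{C}}\setminus\phi(z,E)$, an $n$-punctured sphere. The $n$ graphs $z\mapsto\phi(z,x_j)$ are holomorphic and disjoint, so the total space $\{(z,w)\in\mathbb{D}\times\hat{\mathbb{C}}:w\notin\phi(z,E)\}$ is open and fibres holomorphically over $\mathbb{D}$ with fibre $\Omega_z$; over the contractible base $\mathbb{D}$ this bundle is topologically trivial, and a trivialization, read through the motion, equips each $\Omega_z$ with a marking referred back to $\Omega_0=\hat{\mathbb{C}}\setminus E$. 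This produces a map $\tau:\mathbb{D}\to{\rm Teich}(\Omega_0)$ with $\tau(0)$ the base point, holomorphic in Bers' coordinates precisely because the free punctures $\phi(z,x_4),\dots,\phi(z,x_n)$ depend holomorphically on $z$. By the Royden--Gardiner theorem the Teichm\"uller metric of the finite-dimensional space ${\rm Teich}(\Omega_0)$ coincides with its Kobayashi metric, so $\tau$ is $1$-Lipschitz and
\[d_T(\tau(0),\tau(z))\le d_{\mathbb{D}}(0,z)=\tfrac12\log\tfrac{1+|z|}{1-|z|}.\]
Unwinding the definition of $d_T$, the Teichm\"uller map $f_z:\Omega_0\to\Omega_z$ representing $\tau(z)$ is quasiconformal with $K(f_z)\le\frac{1+|z|}{1-|z|}$; isolated punctures being removable for quasiconformal maps, $f_z$ extends to a quasiconformal self-map $\hat{\phi}(z,\cdot)$ of $\hat{\mathbb{C}}$ with the same dilatation bound, and because the marking was induced by the motion this extension sends $x_j$ to $\phi(z,x_j)$ for every $j$ (in particular it fixes $x_1,x_2,x_3$) and is the identity at $z=0$. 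This settles the lemma for finite $E$.

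For a general $E$ with $|E|\ge 3$ I would choose a countable dense subset $\{y_1,y_2,\dots\}$ of $E$ containing the three fixed points and apply the finite case to $E_k:=\{y_1,\dots,y_k\}$, obtaining for each $z$ quasiconformal self-maps $\hat{\phi}^{(k)}(z,\cdot)$ of $\hat{\mathbb{C}}$ that fix the three distinguished points, have dilatation $\le\frac{1+|z|}{1-|z|}$, and satisfy $\hat{\phi}^{(k)}(z,y_j)=\phi(z,y_j)$ whenever $j\le k$. Fixing $z$, the compactness theorem for normalized quasiconformal maps of bounded dilatation makes $\{\hat{\phi}^{(k)}(z,\cdot)\}_k$ a normal family whose every subsequential limit $\hat{\phi}(z,\cdot)$ is again a quasiconformal self-map of $\hat{\mathbb{C}}$ with $K(\hat{\phi}(z,\cdot))\le\frac{1+|z|}{1-|z|}$ and agreeing with $\phi(z,\cdot)$ on each $y_j$; by continuity of $x\mapsto\phi(z,x)$ on $E$ (a consequence of the $\lambda$-lemma) it agrees with $\phi(z,\cdot)$ on all of $E$. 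Undoing the initial M\"obius normalization, a conformal conjugacy, yields the required extension of dilatation at most $K=\frac{1+|z|}{1-|z|}$.

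The step I expect to be the main obstacle is the finite case, and within it two points: realizing $z\mapsto\Omega_z$ as a genuinely holomorphic map into ${\rm Teich}(\Omega_0)$ carrying exactly the marking induced by the motion --- so that the extremal map extends to $\hat{\mathbb{C}}$ reproducing $\phi(z,\cdot)$ on $E$ rather than merely carrying $E$ onto $\phi(z,E)$ --- and invoking the equality of the Teichm\"uller and Kobayashi metrics. It is this latter input that produces the sharp constant $\frac{1+|z|}{1-|z|}$ in place of a weaker bound; everything after it, including the limiting argument for infinite $E$, is routine once the uniform dilatation bound is in hand. (An alternative route, closer to the original argument of Bers and Royden, would instead extract a Beltrami coefficient $\mu_z$ depending holomorphically on $z$ with $\mu_0=0$ and apply the Schwarz lemma in the unit ball of $L^\infty$ to get $\|\mu_z\|_\infty\le|z|$, whence the same bound; there the crux is the holomorphic dependence of $\mu_z$, obtained via harmonic Beltrami differentials on $\hat{\mathbb{C}}\setminus E$.)
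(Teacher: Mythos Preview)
The paper does not prove this lemma: it is stated without proof and attributed to Bers and Royden \cite{BR}, serving only as an input to Lemmas~\ref{L3.2} and~\ref{L4.3}. So there is no ``paper's proof'' to compare against.

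Your outline is a correct route to the result. The reduction to finite $E$ via a normal-families limit is standard, and for finite $E$ the argument via the Royden--Gardiner identification of the Teichm\"uller and Kobayashi metrics on ${\rm Teich}(\Omega_0)$ does yield exactly the bound $K=\frac{1+|z|}{1-|z|}$. The two points you flag as delicate are indeed the ones requiring care: that the lift $\tau:\mathbb{D}\to{\rm Teich}(\Omega_0)$ is holomorphic (here simple connectivity of $\mathbb{D}$ ensures a well-defined marking, and holomorphicity follows because ${\rm Teich}$ of an $n$-punctured sphere with three punctures normalized is locally parametrized by the remaining puncture positions), and that the extremal map in the class $\tau(z)$ extends across the punctures reproducing $\phi(z,\cdot)$ on $E$ (removability of isolated points for quasiconformal maps plus the fact that the marking was induced by the motion). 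Your appeal to the $\lambda$-lemma for continuity of $x\mapsto\phi(z,x)$ in the limiting step is legitimate, since the Ma\~n\'e--Sad--Sullivan $\lambda$-lemma is logically independent of the extension result you are proving.

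As you note parenthetically, the original Bers--Royden argument proceeds differently, constructing a \emph{canonical} extension whose Beltrami coefficient is harmonic on $\hat{\mathbb{C}}\setminus\overline{E}$ and depends holomorphically on $z$; the Schwarz lemma in the unit ball of $L^\infty$ then gives $\|\mu_z\|_\infty\le|z|$. That approach buys an extension varying holomorphically in $z$, which your pointwise-in-$z$ construction does not, but the lemma as stated in the paper does not require this, so your argument suffices.
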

We will prove the following two lemmas by means of holomorphic motions.

\begin{lemma}
\label{L3.2}Assume $\alpha\in\mathbb{HT}_N$ and $f\in\mathcal{IS}_{\alpha}$. If $f$ has a Siegel disk $\Delta_f$
at the origin, then $\Delta_f$ is the component of $V_f\setminus\mathcal{O}_f$ containing $0$ and compactly contained in $V_f$.
\end{lemma}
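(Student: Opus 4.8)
The plan is to show two things about $\Delta_f$: first that $\Delta_f$ is contained in a single component $W$ of $V_f\setminus\mathcal{O}_f$, and second that conversely $W\subset\Delta_f$, so that $\Delta_f=W$; compact containment in $V_f$ then follows from Lemma \ref{L3.1}, since $\mathcal{O}_f$ is compactly contained in $V_f$ and therefore its complementary component containing $0$ has closure disjoint from $\partial V_f$. The first inclusion is essentially classical: on the Siegel disk $f$ is conjugate to an irrational rotation, so $\Delta_f$ contains no critical point and no point of the forward orbit of $c_f$ (a critical value in $\Delta_f$ would force $c_f$ itself into $\Delta_f$ by the local structure, or more simply: $\Delta_f$ is in the Fatou set while $\mathcal{O}_f$ accumulates on $\partial\Delta_f$ in the classical Siegel picture — the standard fact that the boundary of a Siegel disk is contained in the closure of the postcritical set). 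Hence $\Delta_f\cap\mathcal{O}_f=\emptyset$, $\Delta_f$ is connected, and $0\in\Delta_f$, so $\Delta_f\subset W$, the component of $V_f\setminus\mathcal{O}_f$ containing $0$.

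The substantive direction is $W\subset\Delta_f$, and this is where I would use holomorphic motions. The idea is to build a holomorphic motion of $\overline{W}$ (or of $W$ together with enough of its boundary) over a disk of parameters, anchored at $f$, which conjugates the dynamics. Concretely: embed $f$ in the one-parameter family $f_\beta = P\comp\psi^{-1}\comp e^{2\pi i\beta}$ with $\beta$ ranging over a neighborhood of $\alpha$ in $\mathbb{C}$ (using the representation $f=P\comp\psi^{-1}\comp e^{2\pi i\alpha}$ from the notation list), and consider the postcritical sets $\mathcal{O}_{f_\beta}$. By Lemma \ref{L3.1} the critical orbit stays in a fixed compact subset of the domain, uniformly, so the maps $\beta\mapsto f_\beta^n(c_{f_\beta})$ are holomorphic and (after checking injectivity in $\beta$ for the relevant range, or passing to a branched cover / using the $\lambda$-lemma's automatic extension) give a holomorphic motion of $\mathcal{O}_f$; the complementary component $W$ then also moves holomorphically by the Bers–Royden extension (Lemma \ref{l1}), yielding quasiconformal maps $h_\beta:\overline{W}\to\overline{W_\beta}$ with $h_\alpha=\mathrm{id}$ and dilatation $\to1$ as $\beta\to\alpha$. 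The key point is then that $h_\beta$ conjugates $f$ on $W$ to $f_\beta$ on $W_\beta$ up to controlled error, so that a point $z\in W$ has a forward orbit under $f$ that remains in a fixed compact subset of $V_f$ (a bounded orbit avoiding the postcritical set): by the classical characterization of Siegel disks (an orbit is recurrent and non-escaping, with the family $\{f^n\}$ normal on $W$ because $W$ omits the three points $0$, and points of $\mathcal{O}_f$ — Montel), $W$ lies in the Fatou set, the component of the Fatou set containing $0$, and since that component carries an irrational rotation it is exactly the Siegel disk.

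Actually the cleanest route for $W\subset\Delta_f$ avoids the motion and runs as follows, which I would present as the main argument: $W$ is a hyperbolic Riemann surface omitting $0$ and all of $\mathcal{O}_f$ (at least two, in fact infinitely many, points), and $f^n$ is defined on $W$ for all $n\geq0$ with $f^n(W)\subset V_f\setminus\{0\}$ and disjoint from $\mathcal{O}_f$ — here one must check $f(W)\subset W$, i.e. that $f$ maps this component into itself, which holds because $f$ permutes the components of $V_f\setminus\mathcal{O}_f$ (as $\mathcal{O}_f$ is forward-invariant and $f$ is an open map) and fixes $0\in W$; one also needs that $f^n|_W$ stays inside $V_f$, which is where Lemma \ref{L3.1}'s uniform compact containment of $\mathcal{O}_f$, plus the location of $V_f$, is invoked. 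Then $\{f^n|_W\}$ is a normal family by Montel (values omit the fixed infinite set $\mathcal{O}_f\cup\{0\}$ — or just three points of it), so $W$ lies in the Fatou set; since $0\in W$ is a Cremer or Siegel type fixed point and we are assuming the Siegel case, the Fatou component of $0$ is $\Delta_f$, giving $W\subset\Delta_f$. Combined with $\Delta_f\subset W$ we get equality.

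The main obstacle I anticipate is the step "$f$ maps $W$ into $W$ and $f^n|_W\subset V_f$ for all $n$": one has to be careful that the component $W$ of $V_f\setminus\mathcal{O}_f$ through $0$ is forward invariant, which requires knowing $f(\mathcal{O}_f)\subset\mathcal{O}_f$ and $f(W)\subset V_f$ (the latter because $f$ is only defined on $V_f$, not all of $\mathbb{C}$, so one needs the orbit to not exit the petal/domain) — this is exactly the non-escaping property of Lemma \ref{L3.1} applied not just to the critical point but propagated to the whole component via the maximum principle or via the holomorphic motion bound. The holomorphic-motion argument is the safe fallback if the direct Montel argument needs the stronger statement that $W$ is genuinely compactly contained (not just contained) in $V_f$, which is asserted in the lemma and which I would extract from the uniform compact containment of $\mathcal{O}_f$ together with the fact that $0$ is separated from $\partial V_f$.
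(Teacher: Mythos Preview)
Your Montel route has a genuine gap at the step you yourself flag: the forward invariance $f(W)\subset W$, and more basically $\overline{W}\subset V_f$. Lemma~\ref{L3.1} says $\mathcal{O}_f$ is compactly contained in $V_f$, but that tells you nothing about whether $\mathcal{O}_f$ separates $0$ from $\partial V_f$. A priori, $0$ could lie in the ``outer'' component of $V_f\setminus\mathcal{O}_f$, the one whose closure meets $\partial V_f$; in that case $f^n|_W$ is not even globally defined and the normality argument collapses. Proving that $\mathcal{O}_f$ winds around $0$ is essentially equivalent to proving $\partial\Delta_f\subset\mathcal{O}_f$, which is the substance of the lemma --- so the argument is circular. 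Your appeals to the maximum principle or to ``propagating'' the non-escaping property from the critical orbit to all of $W$ are not arguments; there is no harmonic or subharmonic quantity in sight to which a maximum principle applies.

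Your holomorphic-motion fallback, as you set it up (over a neighborhood of $\alpha$ in the $\beta$-parameter), also does not lead anywhere: moving the rotation number does not connect $f$ to a map for which the conclusion is already known, and injectivity of $\beta\mapsto f_\beta^n(c_{f_\beta})$ is not available. The paper's proof uses a holomorphic motion in a different direction: it fixes $\alpha$ and moves through the Teichm\"uller disk in $\mathcal{S}^{qc}(V)$ from the reference map $P\circ e^{2\pi i\alpha}|_{e^{-2\pi i\alpha}V}$ (the restriction of the actual cubic polynomial, for which the classical theory gives $\partial\Delta\subset\mathcal{O}$ and recurrence of the critical point) to the given $f$. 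The Bers--Royden extension then yields a quasiconformal $\phi$ with $\phi(\mathcal{O}_{\mathrm{ref}})=\mathcal{O}_f$ and conjugating the dynamics on these sets; since $\partial\Delta_{\mathrm{ref}}\subset\mathcal{O}_{\mathrm{ref}}$, one gets $\partial\phi(\Delta_{\mathrm{ref}})\subset\mathcal{O}_f$, which is exactly the missing ``$\mathcal{O}_f$ surrounds $0$'' fact. The recurrence of $c_f$ (transported from the reference) then gives $\Delta_f\cap\mathcal{O}_f=\emptyset$ and hence $\Delta_f\subset W$. The point is that the comparison map supplies the topological information about $\mathcal{O}_f$ that your direct approach has no way to produce.
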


\begin{proof}
Since $f\in\mathcal{IS}_{\alpha}$, there exists $\psi\in\mathcal{S}^{qc}$ such that
$f=P\comp\psi^{-1}\comp e^{2\pi i\alpha}$.
Since both $R(id|_V)$ and $R(\psi)$ belong to ${\rm Teich}(W)$, there exists a holomorphic map
$\tau:\mathbb{D}\to{\rm Teich}(W)$ such that $\tau(0)=R(id|_V)$ and $\tau(a)=R(\psi)$ for some
$a\in\mathbb{D}$. By  Lemma \ref{l2}, there exists a holomorphic function
$\phi:\mathbb{D}\times V\to\mathbb{C}$ such that $\phi_{\lambda}:=\phi(\lambda,\cdot)\in\mathcal{S}^{qc}(V)$
and $R(\phi_{\lambda})=\tau(\lambda)$.
Let $\tilde{\tau}(\lambda)=P\comp\phi_{\lambda}^{-1}\comp e^{2\pi i\alpha}$ for $\lambda\in\mathbb{D}$ and $c_{\lambda}$ is the critical point of $\tilde{\tau}(\lambda)$, that is,
$c_{\lambda}=e^{-2\pi i\alpha}\phi_{\lambda}(-\frac{1}{3})$.
Then $\tilde{\tau}(0)=P\comp e^{2\pi i\alpha}|_{e^{-2\pi i\alpha}V}$ and $\tilde{\tau}(a)=f$.
Let
$$E=\{0,\infty\}\cup\{\tilde{\tau}(0)^k(c_0)|k=0,1,\cdots\}$$
and
\[H:\mathbb{D}\times E\to\mathbb{C}\]
such that
\[H(\lambda,x)=\left\{\begin{matrix}\tilde{\tau}(\lambda)^k(c_{\lambda}),
&x=\tilde{\tau}(0)^k(c_0),k\geq0\\
x,&x=0,\infty\end{matrix}\right..\]
It is easy to check that for all $x\in E$, $H(\cdot,x)$ is a holomorphic function.
By Lemma \ref{L3.1}, for all $\lambda\in\mathbb{D}$, $H(\lambda,\cdot)$ is injective.
Thus the map $H$ is a holomorphic motion with the base point $0$. By Lemma \ref{l1},
there exists a quasiconformal map $\phi$ (with dilatation not exceeding $\frac{1+|a|}{1-|a|}$)
from $\hat{\mathbb{C}}$ to itself such that $\phi(z)=H(a,z)$ for $z\in E$.
Thus $\phi(\mathcal{O}_{\tilde{\tau}(0)})=\mathcal{O}_{\tilde{\tau}(a)}$ and
for all $z\in\mathcal{O}_{\tilde{\tau}(0)}\cup\{0,\infty\}$,
$\phi\comp\tilde{\tau}(0)(z)=\tilde{\tau}(a)\comp\phi(z)$, that is,
$\phi(\mathcal{O}_{P\comp e^{2\pi i\alpha}|_{e^{-2\pi i\alpha}V}})=\mathcal{O}_f$
and
for all $z\in\mathcal{O}_{P\comp e^{2\pi i\alpha}|_{e^{-2\pi i\alpha}V}}\cup\{0,\infty\}$,
$\phi\comp P\comp e^{2\pi i\alpha}|_{e^{-2\pi i\alpha}V}(z)=f\comp\phi(z)$.
Since $0\not\in\mathcal{O}_f=\phi(\mathcal{O}_{P\comp e^{2\pi i\alpha}|_{e^{-2\pi i\alpha}V}})$,
we have $0\not\in\mathcal{O}_{P\comp e^{2\pi i\alpha}|_{e^{-2\pi i\alpha}V}}$.
Thus the origin is a Siegel point of $P\comp e^{2\pi i\alpha}|_{e^{-2\pi i\alpha}V}$.
Since
$$\partial(\phi(\Delta_{P\comp e^{2\pi i\alpha}|_{e^{-2\pi i\alpha}V}}))=\phi(\partial\Delta_{P\comp e^{2\pi i\alpha}|_{e^{-2\pi i\alpha}V}})\subset
\phi(\mathcal{O}_{P\comp e^{2\pi i\alpha}|_{e^{-2\pi i\alpha}V}})\subset\mathcal{O}_f\subset V_f,$$
we have that $\phi(\Delta_{P\comp e^{2\pi i\alpha}|_{e^{-2\pi i\alpha}V}})$ is compactly contained in $V_f$ and
is the simply connected component of $V_f\setminus\mathcal{O}_f$ containing $0$.
Since
$$f(\phi(\partial\Delta_{P\comp e^{2\pi i\alpha}|_{e^{-2\pi i\alpha}V}}))=
\phi(P\comp e^{2\pi i\alpha}|_{e^{-2\pi i\alpha}V}(\partial\Delta_{P\comp e^{2\pi i\alpha}|_{e^{-2\pi i\alpha}V}}))
=\phi(\partial\Delta_{P\comp e^{2\pi i\alpha}|_{e^{-2\pi i\alpha}V}}),$$
we have $\phi(\Delta_{P\comp e^{2\pi i\alpha}|_{e^{-2\pi i\alpha}V}})\subset\Delta_f$.
It is known that $c_0$ is a recurrent critical point. Then $c_f$ is a recurrent critical point.
Thus $\Delta_f\subset\phi(\Delta_{P\comp e^{2\pi i\alpha}|_{e^{-2\pi i\alpha}V}})$.
This completes the proof.
\end{proof}

\begin{lemma}
\label{L4.3}For all $\alpha\in\mathbb{HT}_N$ and $f,g\in\mathcal{IS}_{\alpha}$, there exists a quasiconformal map $\phi$ from $\hat{\mathbb{C}}$ onto itself with dilatation not exceeding $e^{d_{T}(f,g)}$ such that
$\phi(0)=0$, $\phi(\mathcal{O}_f)=\mathcal{O}_g$ and $\phi\comp f(z)=g\comp\phi(z)$ for $z\in\mathcal{O}_f$. Consequently, if $\alpha$ is a Brjuno number and $\Delta_f$ (or $\Delta_g$) is the Siegel disk of $f$ (or $g$)
at the origin, then $\phi(\Delta_f)=\Delta_g$. Moreover, we can further request that $\phi$ is conformal on $\Delta_f$ and
$\phi\comp f(z)=g\comp\phi(z)$ for $z\in\Delta_f$.
\end{lemma}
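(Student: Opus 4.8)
The plan is to mimic the holomorphic--motion argument of Lemma~\ref{L3.2} for the pair $(f,g)$, and then to refine the resulting conjugacy on the Siegel disk. Write $f=P\comp\psi_f^{-1}\comp e^{2\pi i\alpha}$ and $g=P\comp\psi_g^{-1}\comp e^{2\pi i\alpha}$ with $\psi_f,\psi_g\in\mathcal{S}^{qc}$, so that $d_T(f,g)=d_T(\psi_f,\psi_g)$ is the Teichm\"uller distance between $R(\psi_f)$ and $R(\psi_g)$ in ${\rm Teich}(W)$. Since the Teichm\"uller metric coincides with the Kobayashi metric (Royden), there is a holomorphic disk $\tau:\mathbb{D}\to{\rm Teich}(W)$ with $\tau(0)=R(\psi_f)$, $\tau(a)=R(\psi_g)$ and $\tfrac{1+|a|}{1-|a|}=e^{d_T(f,g)}$. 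Lifting $\tau$ through Lemma~\ref{l2} gives a holomorphic $\phi:\mathbb{D}\times V\to\mathbb{C}$ with $\phi_\lambda:=\phi(\lambda,\cdot)\in\mathcal{S}^{qc}$ and $R(\phi_\lambda)=\tau(\lambda)$; as $R$ is injective, $\phi_0=\psi_f$ and $\phi_a=\psi_g$ on $V$. Put $F_\lambda:=P\comp\phi_\lambda^{-1}\comp e^{2\pi i\alpha}\in\mathcal{IS}_\alpha$, so $F_0=f$, $F_a=g$, and let $c_\lambda:=e^{-2\pi i\alpha}\phi_\lambda(-\tfrac13)$ be the critical point of $F_\lambda$, which depends holomorphically on $\lambda$.

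For the first assertion, form the holomorphic motion $H:\mathbb{D}\times E\to\hat{\mathbb{C}}$ of $E:=\{0,\infty\}\cup\{F_0^k(c_0):k\geq0\}$ given by $H(\lambda,0)=0$, $H(\lambda,\infty)=\infty$, $H(\lambda,F_0^k(c_0))=F_\lambda^k(c_\lambda)$: by Lemma~\ref{L3.1} the critical orbit is non-escaping and infinite and avoids $\{0,\infty\}$, so $H$ is well defined, holomorphic in $\lambda$, and injective in the second variable, with base point $0$. By Lemma~\ref{l1}, $H(a,\cdot)$ extends to a quasiconformal self-map $\phi$ of $\hat{\mathbb{C}}$ with dilatation at most $\tfrac{1+|a|}{1-|a|}=e^{d_T(f,g)}$. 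Then $\phi(0)=0$, $\phi$ carries the $f$-orbit of $c_f$ onto the $g$-orbit of $c_g$ conjugating $f$ to $g$ there, and passing to closures gives $\phi(\mathcal{O}_f)=\mathcal{O}_g$ and $\phi\comp f=g\comp\phi$ on $\mathcal{O}_f$ by continuity.

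Suppose now $\alpha$ is a Brjuno number, so $0$ is a Siegel point of every $F_\lambda$ and, by Lemma~\ref{L3.2}, $\Delta_f$ (resp.\ $\Delta_g$) is the component of $V_f\setminus\mathcal{O}_f$ (resp.\ $V_g\setminus\mathcal{O}_g$) through $0$, compactly contained in $V_f$ (resp.\ $V_g$). Since $\phi$ is a homeomorphism of $\hat{\mathbb{C}}$ with $\phi(0)=0$, $\phi(\infty)=\infty$ and $\phi(\mathcal{O}_f)=\mathcal{O}_g$, the set $\phi(\Delta_f)$ is a bounded Jordan domain whose boundary $\phi(\partial\Delta_f)$ lies in $\mathcal{O}_g$, which is compactly contained in the simply connected domain $V_g$; hence $\phi(\Delta_f)$ is compactly contained in $V_g$ (the bounded complementary component of a Jordan curve inside a simply connected domain stays inside it), is a component of $V_g\setminus\mathcal{O}_g$, and contains $0$, so $\phi(\Delta_f)=\Delta_g$ by Lemma~\ref{L3.2}.

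It remains to arrange that $\phi$ is conformal on $\Delta_f$ with $\phi\comp f=g\comp\phi$ there, and this is the step I expect to require real work. Since $f|_{\Delta_f}$ and $g|_{\Delta_g}$ are both conformally conjugate to $z\mapsto e^{2\pi i\alpha}z$ on round disks via their linearizing coordinates, the conformal isomorphisms $\Delta_f\to\Delta_g$ fixing $0$ and conjugating $f$ to $g$ form a one-parameter family, differing by a rotation in linearizing coordinates; as any homeomorphism conjugating $f|_{\partial\Delta_f}$ to $g|_{\partial\Delta_g}$ is, in those coordinates, a rigid rotation of the boundary circle, one of these conformal isomorphisms $\psi$ has the same boundary values on $\partial\Delta_f$ as the $\phi$ produced above. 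Redefining $\phi$ to equal $\psi$ on $\Delta_f$ and leaving it unchanged elsewhere gives a homeomorphism that is conformal on $\Delta_f$, conjugates $f$ to $g$ on $\Delta_f$, and keeps all the previous properties. The main obstacle is to verify that this modified $\phi$ is still globally quasiconformal with dilatation at most $e^{d_T(f,g)}$ --- a quasiconformal--removability statement for $\partial\Delta_f$. I would handle it by a quasiconformal surgery adapted to the linearized dynamics: enrich the motion $H$ by also moving the compactly contained invariant sub-disks $\xi_{F_0}^{-1}(\overline{\mathbb{D}_\rho})\Subset\Delta_f$ (which have real-analytic boundaries) via $(\lambda,z)\mapsto\xi_{F_\lambda}^{-1}(\xi_{F_0}(z))$ --- using holomorphic dependence of the linearizing coordinates $\xi_{F_\lambda}$ and a uniform positive lower bound, over $\overline{\mathbb{D}_{|a|}}$, for the conformal radii of the $\Delta_{F_\lambda}$ --- and apply Lemma~\ref{l1} to the enlarged motion to obtain conjugacies that are conformal on these sub-disks with the required dilatation bound; a limiting argument, together with the fact that the dynamics inside the Siegel disk is conformally linearizable, then upgrades this to conformality on all of $\Delta_f$ without enlarging the dilatation.
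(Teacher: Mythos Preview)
Your first two paragraphs follow the paper's strategy closely, with one difference worth flagging: you assert a single holomorphic disk $\tau:\mathbb{D}\to{\rm Teich}(W)$ with $\tau(0)=R(\psi_f)$, $\tau(a)=R(\psi_g)$ and $\tfrac{1+|a|}{1-|a|}=e^{d_T(f,g)}$, citing ``Teichm\"uller $=$ Kobayashi''. That identity alone gives only that the infimum over \emph{chains} of disks equals $d_T$; to get a single extremal disk you need the existence of an extremal Beltrami coefficient and the associated Teichm\"uller (geodesic) disk $\lambda\mapsto[\lambda\mu/\|\mu\|_\infty]$. This is true here, so your shortcut works, but the justification is the extremal--map compactness argument rather than Royden--Gardiner. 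The paper avoids this point entirely by using approximating chains $\{h_{nj}\}$ of holomorphic disks, composing the resulting quasiconformal conjugacies along each chain, and then passing to a subsequential limit as the chain lengths tend to $d_T(f,g)$; this is more laborious but uses only the definition of the Kobayashi distance.

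For the conformality on $\Delta_f$ your plan is considerably more complicated than necessary, and the limiting surgery you sketch is not clearly rigorous (you would need uniform control of linearising radii over the whole disk $\overline{\mathbb{D}_{|a|}}$, compatibility of the Bers--Royden extensions for varying $\rho$, and a reason why the limit agrees with the original $\phi$ outside $\Delta_f$). The paper dispatches this step directly: by \cite{C17} or \cite{SY} the Siegel disks $\Delta_f,\Delta_g$ are Jordan domains, so the linearisers $\psi_f,\psi_g$ extend to homeomorphisms $\overline{\mathbb{D}}\to\overline{\Delta_f},\overline{\Delta_g}$; since $\phi$ already conjugates $f$ to $g$ on $\partial\Delta_f\subset\mathcal{O}_f$, the map $\hat\psi_g^{-1}\comp\phi\comp\hat\psi_f$ commutes with $z\mapsto e^{2\pi i\alpha}z$ on $\partial\mathbb{D}$ and is therefore a rotation $e^{2\pi i\theta}$; redefining $\phi$ on $\Delta_f$ by $z\mapsto\psi_g(e^{2\pi i\theta}\psi_f^{-1}(z))$ gives a homeomorphism that is $K$--quasiconformal off $\partial\Delta_f$ and conformal on $\Delta_f$, and Rickman's removability theorem (applied across the quasicircle $\partial\Delta_f$) yields global quasiconformality with the same dilatation bound. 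You should replace your last paragraph with this argument; note in particular that your phrase ``as any homeomorphism conjugating $f|_{\partial\Delta_f}$ to $g|_{\partial\Delta_g}$ is, in those coordinates, a rigid rotation'' already presupposes the Jordan--domain result, which you do not cite.
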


\begin{proof}
Since $f,g\in\mathcal{IS}_{\alpha}$, there exist $\psi_0, \psi_1\in\mathcal{S}^{qc}$ such that
$f=P\comp\psi_0^{-1}\comp e^{2\pi i\alpha}$ and $g=P\comp\psi_1^{-1}\comp e^{2\pi i\alpha}$.
According to Royden-Gardiner Theorem\upcite{GA}, complex manifold ${\rm Teich}(W)$ is Kobayashi hyperbolic and the Kobayashi distance coincides with the Teichm\"uller distance.
Thus we can choose a sequence of subsets
$A_n=\{h_{nj}\}_{j=1}^{t_n}\subset\mathcal{S}^{qc}$ such that for every $n$, there exist $t_n+1$ complex numbers
$a_{n1},\cdots,a_{nt_n},a_{n(t_n+1)}\in\mathbb{D}$ and $t_n+1$ holomorphic maps
$\tau_{n1},\cdots,\tau_{nt_n},\tau_{n(t_n+1)}$ from $\mathbb{D}$
to ${\rm Teich}(W)$ with
$$\tau_{n1}(0)=R(\psi_0),\ \tau_{n1}(a_{n1})=R(h_{n1}),\
\tau_{nj}(0)=R(h_{n(j-1)})\ (j=2,\cdots,t_n),$$
$$\tau_{nj}(a_{nj})=R(h_{nj})\ (j=2,\cdots,t_n),\ \tau_{n(t_n+1)}(0)=R(h_{nt_n}),\
\tau_{n(t_n+1)}(a_{n(t_n+1)})=R(\psi_1),$$
and
$$\lim\limits_{n\to+\infty}\sum_{j=1}^{t_n+1}\ln\frac{1+|a_{nj}|}{1-|a_{nj}|}=d_T(f,g).$$
For all $n\geq1$, we let $H_{nj}=P\comp h_{nj}^{-1}\comp e^{2\pi i\alpha}$ for $j=1,\cdots,t_n$, $H_{n0}=f$
and $H_{n(t_n+1)}=g$.
Similar to the proof of Lemma \ref{L3.2}, it follows that
for all $n\geq1$ and $j=1,2,\cdots,t_n+1$,
there exist quasiconformal maps $\phi_{nj}$ from $\hat{\mathbb{C}}$ to itself with dilatation not exceeding $\frac{1+|a_{nj}|}{1-|a_{nj}|}$ such that
$\phi_{nj}(0)=0$, $\phi_{nj}(\infty)=\infty$, $\phi_{nj}(\mathcal{O}_{H_{n(j-1)}})=\mathcal{O}_{H_{nj}}$ and $\phi_{nj}\comp H_{n(j-1)}(z)=H_{nj}\comp\phi_{nj}(z)$ for $z\in\mathcal{O}_{H_{n(j-1)}}$. Then for all $n\geq1$, $\phi_n=\phi_{n(t_n+1)}\comp\cdots\comp\phi_{n2}\comp\phi_{n1}$ with dilatation not exceeding $\prod_{j=1}^{t_n+1}\frac{1+|a_{nj}|}{1-|a_{nj}|}$ satisfies
$\phi_n(0)=0$, $\phi_n(\infty)=\infty$, $\phi_n(\mathcal{O}_{f})=\mathcal{O}_{g}$ and $\phi_n\comp f(z)=g\comp\phi_n(z)$ for $z\in\mathcal{O}_f$.
Let $\phi$ be a limiting map for some subsequence of $\{\phi_n\}_{n=1}^{+\infty}$. Then one can obtain that
$\phi$ is a quasiconformal with $\phi(0)=0$, $\phi(\infty)=\infty$, $\phi(\mathcal{O}_{f})=\mathcal{O}_{g}$, $\phi\comp f(z)=g\comp\phi(z)$ for $z\in\mathcal{O}_f$ and dilatation not exceeding $e^{d_T(f,g)}$.
Thus $\phi$ is what we want.

If $\alpha$ is a Brjuno number, it follows from Lemma \ref{L3.2} that
$\phi(\Delta_f)=\Delta_g$. In this case, we can further request that $\phi$ is conformal on $\Delta_f$ and conjugates $f$ on $\Delta_f$ to $g$ on $\Delta_g$. In fact, let conformal maps $\psi_f:\mathbb{D}\to\Delta_f$ and $\psi_g:\mathbb{D}\to\Delta_g$ such that $\psi_f^{-1}\comp f\comp\psi_f(z)=e^{2\pi i\alpha}z$ and $\psi_g^{-1}\comp g\comp\psi_g(z)=e^{2\pi i\alpha}z$ for all $z\in\mathbb{D}$. According to \cite{C17} or \cite{SY}, both $\Delta_f$ and $\Delta_g$ are Jordan domains.
Thus there exists a homeomorphic extension
$\hat{\psi}_f${\rm(}or $\hat{\psi}_g${\rm)} of $\psi_f${\rm(}or $\psi_g${\rm)} from $\overline{\mathbb{D}}$ to $\overline{\Delta}_f${\rm(}or $\overline{\Delta}_g${\rm)} such that
$\hat{\psi}_f^{-1}\comp f\comp\hat{\psi}_f(z)=e^{2\pi i\alpha}z$
{\rm(}or $\hat{\psi}_g^{-1}\comp g\comp\hat{\psi}_g(z)=e^{2\pi i\alpha}z${\rm)} for all $z\in\overline{\mathbb{D}}$.
Then $\hat{\psi}_f$ conjugates $e^{2\pi i\alpha}z$ on $\partial\mathbb{D}$ to $f$ on $\partial\Delta_f$ and
$\hat{\psi}_g$ conjugates $e^{2\pi i\alpha}z$ on $\partial\mathbb{D}$ to $g$ on $\partial\Delta_g$.
It follows from Lemma \ref{L3.2} that $\phi$ conjugates $f$ on $\partial\Delta_f$ to $g$ on $\partial\Delta_g$.
Thus $\hat{\psi}_g^{-1}\comp\phi\comp\hat{\psi}_f$ conjugates $e^{2\pi i\alpha}z$ on $\partial\mathbb{D}$ to
$e^{2\pi i\alpha}z$ on $\partial\mathbb{D}$. This implies that $\hat{\psi}_g^{-1}\comp\phi\comp\hat{\psi}_f(z)=e^{2\pi i\theta}z$
for all $z\in\partial\mathbb{D}$ and some $\theta\in\mathbb{R}$. We define a map
\[\hat{\phi}(z)=\left\{\begin{matrix}\phi(z),&z\in\mathbb{C}\setminus\Delta_f\\
\psi_g(e^{2\pi i\theta}(\psi_f^{-1}(z))),&z\in\Delta_f\end{matrix}\right..\]
It follows easily from Rickman Theorem\upcite{R} that $\hat{\phi}$ is quasiconformal and hence $\hat{\phi}$
satisfies our requestness.
\end{proof}
Lemma \ref{L3.2} can help us prove the following lemma, which give a close connection
between Siegel disks of original maps and Siegel disks of renormalization maps.

\begin{lemma}
\label{p3.2}Assume $\alpha\in\mathbb{HT}_N$ is a Brjuno number and $h\in\mathcal{IS}_{\alpha}\cup\{Q_{\alpha}\}$. Then
$$\Delta_{\mathcal{R}(h)}={\rm Exp}(\Phi_h((\mathcal{C}_h\cup\mathcal{C}_h^{\#})\cap\Delta_h))\cup\{0\}.$$
\end{lemma}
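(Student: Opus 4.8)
The plan is to establish the identity $\Delta_{\mathcal{R}(h)} = {\rm Exp}(\Phi_h((\mathcal{C}_h \cup \mathcal{C}_h^{\#}) \cap \Delta_h)) \cup \{0\}$ by showing two inclusions, using the dynamical correspondence between $h$ and $\mathcal{R}(h)$ encoded in Lemma \ref{BCL1} together with the characterization of Siegel disks via postcritical sets from Lemma \ref{L3.2}. First I would set $g := \mathcal{R}(h)$, which lies in $\mathcal{IS}_{1/\alpha}$ after the rotation normalization (here $1/\alpha$ is read mod $1$, so its continued fraction is $[0,a_1,a_2,\dots]$, again in $\mathbb{HT}_N$ and again Brjuno since $\alpha$ is), so that $g$ has a Siegel disk $\Delta_g = \Delta_{\mathcal{R}(h)}$ at $0$. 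The key observation is that the map $\chi := {\rm Exp} \comp \Phi_h$ is, by construction of the renormalization, a semiconjugacy from the return dynamics of $h$ on the perturbed petal to $g$: by Lemma \ref{BCL1}, if $\chi(z) = w$ and $\chi(z') = g(w)$ with $z, z' \in \mathcal{P}_h$, then $z' = h^k(z)$ for some $k \geq 1$ with the intermediate orbit staying in ${\rm Def}(h)$. Moreover $\chi$ restricted to $(\mathcal{C}_h \cup \mathcal{C}_h^{\#})$ is, modulo the branching coming from ${\rm Exp}$ being related to $e^{2\pi i \zeta}$, the change of coordinates defining $g$ on its domain $V_g = {\rm Exp}(\Phi_h(\mathcal{D}_h^{-\mathbf{k}_1}))$ lifted back.

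For the inclusion ${\rm Exp}(\Phi_h((\mathcal{C}_h \cup \mathcal{C}_h^{\#}) \cap \Delta_h)) \cup \{0\} \subseteq \Delta_{\mathcal{R}(h)}$: I would take a point $w = \chi(z)$ with $z \in (\mathcal{C}_h \cup \mathcal{C}_h^{\#}) \cap \Delta_h$ and show its forward $g$-orbit stays in a bounded region avoiding the postcritical set $\mathcal{O}_g$, hence $w \in \Delta_g$ by Lemma \ref{L3.2} (the Siegel disk is the component of $V_g \setminus \mathcal{O}_g$ containing $0$). Concretely: each $g$-iterate of $w$ is $\chi$ of an $h$-iterate of $z$ by Lemma \ref{BCL1}, and since $z \in \Delta_h$ and $\Delta_h$ is forward invariant, those $h$-iterates stay in $\Delta_h \subset V_h$, compactly contained and avoiding $\mathcal{O}_h$; one then transports this to say the $g$-orbit of $w$ avoids $\mathcal{O}_g = \chi(\mathcal{O}_h \cap \mathcal{P}_h)$ — using that $\mathcal{O}_g$ is precisely the $\chi$-image of the part of the $h$-postcritical set lying in the petal, which follows from the definition of $\mathcal{R}(h)$ (the critical value $-4/27$ is common and the critical orbits correspond). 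The conformal conjugacy from Lemma \ref{L4.3}, applied to the pair $h, Q_{\alpha}$ or within $\mathcal{IS}_\alpha$, lets me assume $\Delta_h$ and its closure are Jordan domains, so boundary behavior is controlled.

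For the reverse inclusion $\Delta_{\mathcal{R}(h)} \subseteq {\rm Exp}(\Phi_h((\mathcal{C}_h \cup \mathcal{C}_h^{\#}) \cap \Delta_h)) \cup \{0\}$: given $w \in \Delta_g \setminus \{0\}$, it lies in $V_g = {\rm Exp}(\Phi_h(\mathcal{D}_h^{-\mathbf{k}_1}))$, so $w = \chi(\zeta)$ for some $\zeta$ in the petal; pulling back under the appropriate branch and using $\Phi_g \comp g = \Phi_g + 1$, I can arrange the preimage point $z$ to lie in the fundamental domain $\mathcal{C}_h \cup \mathcal{C}_h^{\#}$ (this is the role of these sets — they are the lift of a fundamental annulus for $g$). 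It then remains to show $z \in \Delta_h$. For this I would run the argument in reverse: the full $g$-orbit of $w$ lies in $\Delta_g$, hence avoids $\mathcal{O}_g$; by Lemma \ref{BCL1} this orbit is $\chi$ applied to a sequence of forward $h$-iterates of $z$, and these avoid $\mathcal{O}_h \cap \mathcal{P}_h$; combined with the fact (Lemma \ref{L3.1}) that the postcritical set governs everything and that $z$ cannot be on $\partial\Delta_h$ (else its orbit would accumulate on $\mathcal{O}_h$, forcing $w$'s orbit to accumulate on $\mathcal{O}_g$, contradicting $w \in \Delta_g$), I conclude $z \in \Delta_h$.

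The main obstacle I expect is the bookkeeping around the branching of ${\rm Exp}$ and the precise matching $\mathcal{O}_g = \chi(\mathcal{O}_h \cap \mathcal{P}_h)$: one must verify that the near-parabolic renormalization really does send the relevant piece of the $h$-postcritical set onto the $g$-postcritical set (not just a priori a subset), which requires carefully tracking which $h$-iterates of $c_h$ fall into the petal $\mathcal{P}_h$ versus leaving ${\rm Def}(h)$, and using the Inou–Shishikura non-escaping property of Lemma \ref{L3.1} for $g \in \mathcal{IS}_N$. The identification of $\mathcal{C}_h \cup \mathcal{C}_h^{\#}$ as a genuine fundamental domain for the $g$-action (so that the representative $z$ of $w$ in the petal can always be chosen there, and uniquely up to the dynamics) is the other delicate point, but this is essentially built into property (3) and the definition of $\mathcal{D}_h^{-\mathbf{k}_1}$ recalled above.
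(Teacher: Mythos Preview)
Your strategy of proving the two inclusions via Lemma~\ref{BCL1} and Lemma~\ref{L3.2} is in the right spirit, but the argument for the inclusion $\Delta_{\mathcal{R}(h)}\setminus\{0\}\subseteq{\rm Exp}\bigl(\Phi_h((\mathcal{C}_h\cup\mathcal{C}_h^{\#})\cap\Delta_h)\bigr)$ has a genuine gap. You take $w\in\Delta_g\setminus\{0\}$, lift it to some $z\in\mathcal{C}_h\cup\mathcal{C}_h^{\#}$ with $\chi(z)=w$, and argue that $z\notin\partial\Delta_h$ because otherwise the $g$-orbit of $w$ would accumulate on $\mathcal{O}_g$. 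Even granting this step, the conclusion ``hence $z\in\Delta_h$'' does not follow: nothing in your argument excludes $z$ from lying in the Julia set of $h$ away from $\partial\Delta_h$, or in a strict preimage component of $\Delta_h$, or (for $h\in\mathcal{IS}_\alpha$) simply somewhere in $V_h\setminus\overline{\Delta_h}$. Lemma~\ref{BCL1} only tells you that the forward $h$-orbit of $z$ is defined for all time and returns to $\mathcal{P}_h$ along a subsequence; that is far weaker than forcing $z\in\Delta_h$. A symmetric issue affects your first inclusion: showing that the $g$-orbit of $w$ avoids $\mathcal{O}_g$ does not by itself place $w$ in the particular component of $V_g\setminus\mathcal{O}_g$ containing~$0$.

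The paper closes both gaps by passing to the quotient cylinder $(\mathcal{C}_h\cup\mathcal{C}_h^{\#})/h$ and arguing topologically. It proves a separate Claim~A (whose proof occupies Appendix~A and is not trivial) that the projection of $(\mathcal{C}_h\cup\mathcal{C}_h^{\#})\cap\Delta_h$ to the quotient is a \emph{connected} open set containing a punctured neighborhood of the $0$-end, with boundary exactly the projection of $(\mathcal{C}_h\cup\mathcal{C}_h^{\#})\cap\partial\Delta_h$. Since $\Psi_h^{-1}(\Delta_g\setminus\{0\})$ is likewise connected and contains such a neighborhood, if either set failed to contain the other it would meet the other's boundary. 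For the direction you mishandle, this yields a point $b\in\partial\Delta_h$ with an entire neighborhood $U$ satisfying $\chi(U)\subset\Delta_g$; but $b$ lies in the Julia set, so $U$ contains a point $b''$ whose $h$-orbit escapes (or is only finitely defined), contradicting the infinite-return statement you correctly extract from Lemma~\ref{BCL1}. For the other inclusion the paper also avoids your unverified identity $\mathcal{O}_g=\chi(\mathcal{O}_h\cap\mathcal{P}_h)$: it uses only $\chi(cv_h)=cv_g$ together with Lemma~\ref{BCL1} to pull a single critical-orbit point of $g$ back to a point of $\mathcal{O}_h$ lying inside $\Delta_h$, contradicting Lemma~\ref{L3.2}. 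The connectedness result (Claim~A) is where the real work hides, and your outline supplies nothing in its place.
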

\begin{proof}
Let $(\mathcal{C}_h\cup\mathcal{C}_h^{\#})/h$ be the natural Riemann surface with boundary obtained by gluing the `right' boundary and the `left' boundary of $\mathcal{C}_h\cup\mathcal{C}_h^{\#}$ by $h$.
Let $\pi_h:\mathcal{C}_h\cup\mathcal{C}_h^{\#}\to(\mathcal{C}_h\cup\mathcal{C}_h^{\#})/h$ is the natural projection.
We first have the following claim(See Appendix A for its proof).

\vspace{0.1cm}
\noindent{\bf Claim A:} The set $\pi_h((\mathcal{C}_h\cup\mathcal{C}_h^{\#})\cap\Delta_h)$ is a connected open subset of $(\mathcal{C}_h\cup\mathcal{C}_h^{\#})/h$ containing a neighborhood of the `0' end, not intersecting
the boundary of $(\mathcal{C}_h\cup\mathcal{C}_h^{\#})/h$ and
$$\partial(\pi_h((\mathcal{C}_h\cup\mathcal{C}_h^{\#})\cap\Delta_h))
=\pi_h((\mathcal{C}_h\cup\mathcal{C}_h^{\#})\cap\partial\Delta_h).$$

\vspace{0.1cm}
\noindent It is clear that ${\rm Exp}\comp\Phi_h$ induces an isomorphism $\Psi_h$ from $(\mathcal{C}_h\cup\mathcal{C}_h^{\#})/h$ to ${\rm Exp}(\Phi_h(\mathcal{C}_h\cup\mathcal{C}_h^{\#}))$,
a cylinder with boundary.
By the definitions of the renormalization and Siegel disk, we have
\[\Delta_{\mathcal{R}(h)}\setminus\{0\}\subset\mathcal{R}(h)(\Delta_{\mathcal{R}(h)}\setminus\{0\})
\subset{\rm Exp}(\Phi_h(\mathcal{C}_h\cup\mathcal{C}_h^{\#})).\]
Then $\Psi_h^{-1}(\Delta_{\mathcal{R}(h)}\setminus\{0\})$ is a connected open subset of
$(\mathcal{C}_h\cup\mathcal{C}_h^{\#})/h$ containing a neighborhood of the `0' end.
To prove the lemma, we only need to prove
$$\Psi_h^{-1}(\Delta_{\mathcal{R}(h)}\setminus\{0\})=\pi_h((\mathcal{C}_h\cup\mathcal{C}_h^{\#})\cap\Delta_h).$$
If
$\Psi_h^{-1}(\Delta_{\mathcal{R}(h)}\setminus\{0\})
\not\subset\pi_h((\mathcal{C}_h\cup\mathcal{C}_h^{\#})\cap\Delta_h),$
there exists a point
$$b'\in\Psi_h^{-1}(\Delta_{\mathcal{R}(h)}\setminus\{0\})
\cap\partial(\pi_h((\mathcal{C}_h\cup\mathcal{C}_h^{\#})\cap\Delta_h)),$$
since $\Psi_h^{-1}(\Delta_{\mathcal{R}(h)}\setminus\{0\})$ is connected and
$\Psi_h^{-1}(\Delta_{\mathcal{R}(h)}\setminus\{0\})\cap\pi_h((\mathcal{C}_h\cup\mathcal{C}_h^{\#})
\cap\Delta_h)\not=\emptyset.$
It follows from Claim A that there exists
$$b\in\pi_h^{-1}(\Psi_h^{-1}(\Delta_{\mathcal{R}(h)}\setminus\{0\}))\cap\partial\Delta_h$$
such that $\pi_h(b)=b'$.
Then we can choose a neighborhood $U$ of $b$ such that ${\rm Exp}(\Phi_h(U))\subset\Delta_{\mathcal{R}(h)}$
and there exists a point $b''$ of $U$ such that the orbit of $b''$ under $h$ converges to $\infty$ for
$h=Q_{\alpha}$ or $b''$ can be only iterated finitely many times under $h$ for $h\in\mathcal{IS}_{\alpha}$.
This contradicts the following claim.

\vspace{0.1cm}
\noindent{\bf Claim B:} For any $z\in\mathcal{P}_h$ with ${\rm Exp}(\Phi_h(z))\in\Delta_{\mathcal{R}(h)}$,
infinite elements of the orbit of $z$ under $h$ are contained in $\mathcal{P}_h$.

\vspace{0.1cm}
\noindent In fact, since ${\rm Exp}(\Phi_h(z))\in\Delta_{\mathcal{R}(h)}$,
we have that $w_m=\mathcal{R}(h)^m({\rm Exp}(\Phi_h(z)))$ is well defined for all $m\geq1$.
By the definition of the renormalization, there exists $z_m\in\mathcal{P}_h$ such that ${\rm Exp}(\Phi_h(z_m))=w_m$ for all $m\geq1$.
By Lemma \ref{BCL1}, there exists a strictly increasing sequence of positive integers $\{l_m\}_{m=1}^{+\infty}$ such that $h^{l_m}(z)=z_m$ for all $m\geq1$. Thus $\{z_m\}_{m=1}^{+\infty}\subset\mathcal{P}_h$ is a subsequence of the
orbit of $z$ under $h$. This implies Claim B holds.

If
$$\pi_h((\mathcal{C}_h\cup\mathcal{C}_h^{\#})\cap\Delta_h)\not\subset
\Psi_h^{-1}(\Delta_{\mathcal{R}(h)}\setminus\{0\}),$$
It follows from Claim A that
$$\pi_h((\mathcal{C}_h\cup\mathcal{C}_h^{\#})\cap\Delta_h)\cap
\partial(\Psi_h^{-1}(\Delta_{\mathcal{R}(h)}\setminus\{0\}))\not=\emptyset.$$
Thus there exists a point $c\in(\mathcal{C}_h\cup\mathcal{C}_h^{\#})\cap\Delta_h$
such that ${\rm Exp}(\Phi_h(c))\in\partial\Delta_{\mathcal{R}(h)}$.
Then by Lemma \ref{L3.2} there exists a point $c'\in\Delta_h$ such that ${\rm Exp}(\Phi_h(c'))$ is
in the critical orbit of $\mathcal{R}(h)$. Since ${\rm Exp}(\Phi_h(cv_h))=cv_{\mathcal{R}(h)}$,
by Lemma \ref{BCL1} $h^m(cv_h)=c'$ for some nonnegative integer $m$.
This contradicts Lemma \ref{L3.2}.
\end{proof}

Assume that $f$ is a holomorphic function defined in the neighborhood of 0 with $f(0)=0$,
$f'(0)=e^{2\pi i\alpha} (\alpha\in\mathbb{R}/\mathbb{Z})$ and a Siegel disk $\Delta_f$ at 0.
Let $\mathbb{D}_r=\{z\in\mathbb{C}||z|<r\}$ for all $0<r<+\infty$.
If $\phi:\mathbb{D}\to\Delta_f$ be a conformal map such that
$\phi(0)=0$ and $f(\phi(z))=\phi(e^{2\pi i\alpha}z)$ for $z\in\mathbb{D}$, then for all $0<r<1$,
we call $\phi(\mathbb{D}_r)$ the r-disk of the Siegel disk $\Delta_f$. We denote by $\Delta_f(r)$
the r-disk of the Siegel disk $\Delta_f$.

\begin{corollary}
\label{c3.1}Assume that $\alpha\in\mathbb{HT}_N$ is a Brjuno number and $h\in\mathcal{IS}_{\alpha}\cup\{Q_{\alpha}\}$. Then
for all $0<r<1$, there exist a positive integer $K$ and two real numbers $0<r',r''<1$ such that
\begin{equation}
\label{c2}\Delta_h(r)=\cup_{j=0}^{K}h^j((\mathcal{C}_h\cup\mathcal{C}_h^{\#})\cap\Delta_h(r))\cup\{0\},
\end{equation}
\begin{equation}
\label{c3}\Delta_{\mathcal{R}(h)}(r')=
{\rm Exp}\comp\Phi_h((\mathcal{C}_h\cup\mathcal{C}_h^{\#})\cap\Delta_h(r))\cup\{0\}
\end{equation}
and
\begin{equation}
\label{c4}\Delta_{\mathcal{R}(h)}(r)=
{\rm Exp}\comp\Phi_h((\mathcal{C}_h\cup\mathcal{C}_h^{\#})\cap\Delta_h(r''))\cup\{0\}.
\end{equation}
\end{corollary}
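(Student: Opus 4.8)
The plan is to prove $(\ref{c2})$, $(\ref{c3})$, $(\ref{c4})$ in this order, working in the linearizing coordinate of the Siegel disk and using Lemma~\ref{p3.2} as the backbone. Fix $h\in\mathcal{IS}_{\alpha}\cup\{Q_{\alpha}\}$, let $\phi_h:\mathbb{D}\to\Delta_h$ be the linearizer normalized by $\phi_h(0)=0$, $h\comp\phi_h(z)=\phi_h(e^{2\pi i\alpha}z)$, so that $\Delta_h(\rho)=\phi_h(\mathbb{D}_\rho)$. By Lemma~\ref{L3.2} the critical point $c_h$ lies in $\mathcal{O}_h$, hence outside $\Delta_h$, so $h$ is injective on $\Delta_h$; as $\Delta_h$ is compactly contained in ${\rm Def}(h)$, every iterate of a point of $\Delta_h$ is defined and stays in $\Delta_h$, and each $h^k$ maps $\Delta_h(\rho)$ onto itself as the rotation by $k\alpha$. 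Write $E:={\rm Exp}\comp\Phi_h$. From property~(2), $\Phi_h\comp h=\Phi_h+1$ on $\mathcal{P}_h$ and ${\rm Exp}(\zeta+1)={\rm Exp}(\zeta)$, so $E\comp h=E$ wherever $z,h(z)\in\mathcal{P}_h$; and from the proof of Lemma~\ref{p3.2}, $E=\Psi_h\comp\pi_h$ where $\pi_h$ is the projection to the cylinder $(\mathcal{C}_h\cup\mathcal{C}_h^{\#})/h$ and $\Psi_h$ is the induced isomorphism onto ${\rm Exp}(\Phi_h(\mathcal{C}_h\cup\mathcal{C}_h^{\#}))$.

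For $(\ref{c2})$ only ``$\supseteq$'' needs proof, since $h^j((\mathcal{C}_h\cup\mathcal{C}_h^{\#})\cap\Delta_h(r))\subseteq h^j(\Delta_h(r))=\Delta_h(r)$. Pull back to $\mathbb{D}$: the set $\Sigma:=\phi_h^{-1}((\mathcal{C}_h\cup\mathcal{C}_h^{\#})\cap\Delta_h)$ is an open curvilinear wedge accumulating at $0$ (the `0' end) and at $\partial\mathbb{D}$ (the latter because $-4/27=cv_h$ lies in ${\rm int}(\mathcal{C}_h)$ and on $\partial\Delta_h$). The radius $|\phi_h^{-1}(\cdot)|$ is $h$-invariant on $\Delta_h$, hence descends to the cylinder; $\pi_h(\Sigma)$ is connected by an argument like that of Claim~A in the proof of Lemma~\ref{p3.2}, so $\Sigma$ meets every circle $\partial\mathbb{D}_\tau$, $\tau\in(0,1)$, in a nonempty arc, of angular length bounded below uniformly for $\tau\in(0,r]$ (the width depends continuously on $\tau$ and tends to $2\pi\alpha$ as $\tau\to0$, by the linearization of $h$ and $\Phi_h$ near the Siegel point). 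Choosing $K$ so large that the rotations $e^{2\pi ij\alpha}$, $0\le j\le K$, translate any arc of that length onto the full circle, one gets $\mathbb{D}_r\setminus\{0\}=\bigcup_{j=0}^{K}e^{2\pi ij\alpha}(\Sigma\cap\mathbb{D}_r)$; applying $\phi_h$ and using $\phi_h\comp(e^{2\pi ij\alpha}\cdot)=h^j\comp\phi_h$ gives $(\ref{c2})$.

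For $(\ref{c3})$ set $C_r:=(\mathcal{C}_h\cup\mathcal{C}_h^{\#})\cap\partial\Delta_h(r)$, a nonempty compact subset of $\Delta_h$ with $0\notin C_r$. By an argument like Claim~A, $(\mathcal{C}_h\cup\mathcal{C}_h^{\#})\cap\Delta_h(r)$ is connected and $\pi_h(C_r)$ is a Jordan curve of the cylinder separating its two ends, so $E(C_r)=\Psi_h(\pi_h(C_r))$ is a Jordan curve in $\Delta_{\mathcal{R}(h)}\setminus\{0\}$ around $0$ and $E((\mathcal{C}_h\cup\mathcal{C}_h^{\#})\cap\Delta_h(r))$ is the connected open set it bounds, minus $\{0\}$. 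The crux is that $E(C_r)$ is $\mathcal{R}(h)$-invariant: for $w=E(z)$, $z\in C_r$, Lemma~\ref{p3.2} puts $w':=\mathcal{R}(h)(w)$ in $\Delta_{\mathcal{R}(h)}\setminus\{0\}=E((\mathcal{C}_h\cup\mathcal{C}_h^{\#})\cap\Delta_h)$, so $w'=E(z'')$ with $z''\in(\mathcal{C}_h\cup\mathcal{C}_h^{\#})\cap\Delta_h$; Lemma~\ref{BCL1} gives $z''=h^k(z)$ with all intermediate iterates in ${\rm Def}(h)$, hence in $\Delta_h$, so $z''\in\partial\Delta_h(r)$ and thus $z''\in C_r$, giving $w'\in E(C_r)$. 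Hence $\mathcal{R}(h)(E(C_r))\subseteq E(C_r)$. Passing to the linearizer $\psi:\mathbb{D}\to\Delta_{\mathcal{R}(h)}$ of $\mathcal{R}(h)$, with $\mathcal{R}(h)\comp\psi(u)=\psi(e^{2\pi i/\alpha}u)$, the set $\psi^{-1}(E(C_r))$ is compact and forward-invariant under the irrational rotation by $1/\alpha$, hence a union of circles (each orbit closure being a full circle), and being also a Jordan curve avoiding $0$ it is a single circle $\partial\mathbb{D}_{r'}$ with $0<r'<1$. Therefore $E(C_r)=\partial\Delta_{\mathcal{R}(h)}(r')$, and since $E((\mathcal{C}_h\cup\mathcal{C}_h^{\#})\cap\Delta_h(r))$ is connected, contains a punctured neighborhood of $0$, and is bounded in $\Delta_{\mathcal{R}(h)}$ by this curve, it equals $\Delta_{\mathcal{R}(h)}(r')\setminus\{0\}$; that is $(\ref{c3})$.

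Finally, $(\ref{c4})$ comes from inverting the map $\kappa\colon r\mapsto r'$ of $(\ref{c3})$: it is strictly increasing (if $r_1<r_2$ then $(\mathcal{C}_h\cup\mathcal{C}_h^{\#})\cap\Delta_h(r_1)\subsetneq(\mathcal{C}_h\cup\mathcal{C}_h^{\#})\cap\Delta_h(r_2)$, strict by $(\ref{c2})$, so $\Delta_{\mathcal{R}(h)}(\kappa(r_1))\subsetneq\Delta_{\mathcal{R}(h)}(\kappa(r_2))$ after applying $E$), continuous (both $E((\mathcal{C}_h\cup\mathcal{C}_h^{\#})\cap\Delta_h(r))$ and $\Delta_{\mathcal{R}(h)}(s)$ vary continuously, the latter injectively in $s$), and satisfies $\kappa(r)\to1$ as $r\to1^-$ and $\kappa(r)\to0$ as $r\to0^+$ (by Lemma~\ref{p3.2} the sets $E((\mathcal{C}_h\cup\mathcal{C}_h^{\#})\cap\Delta_h(r))$ exhaust $\Delta_{\mathcal{R}(h)}\setminus\{0\}$, resp.\ shrink to $\emptyset$); so $\kappa$ is a homeomorphism of $(0,1)$ and $r'':=\kappa^{-1}(r)$ works in $(\ref{c4})$. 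The main obstacle is the ``relative Claim~A'': the connectedness of $(\mathcal{C}_h\cup\mathcal{C}_h^{\#})\cap\Delta_h(r)$ and the fact that $C_r$ projects to a single Jordan curve of the cylinder — so that $E(C_r)$ truly bounds a Jordan domain in $\Delta_{\mathcal{R}(h)}$ — together with the uniform lower bound on the angular width of $\Sigma$ that makes $K$ in $(\ref{c2})$ independent of $\tau$. Both rest on a careful local analysis of $\Phi_h$ and $\phi_h$ near the Siegel point, carried out uniformly over $h\in\mathcal{IS}_{\alpha}\cup\{Q_{\alpha}\}$ and covering the cases ${\rm Def}(h)=V_f$ and ${\rm Def}(h)=\mathbb{C}$ at once.
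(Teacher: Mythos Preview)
Your proof shares the paper's skeleton --- linearize $\Delta_h$ via $\phi_h$, invoke a ``relative Claim~A'' for the sub-disk $\Delta_h(r)$, use Lemma~\ref{BCL1} to transport levels between $\Delta_h$ and $\Delta_{\mathcal{R}(h)}$, and finally invert the correspondence $r\mapsto r'$ --- but for $(\ref{c3})$ you run the argument in the opposite direction. You start from $C_r=(\mathcal{C}_h\cup\mathcal{C}_h^{\#})\cap\partial\Delta_h(r)$ on the $h$-side and try to show that $E(C_r)$ is a single invariant circle in $\Delta_{\mathcal{R}(h)}$. The paper instead starts from $\partial\Delta_{\mathcal{R}(h)}(r)$, which is \emph{automatically} a single circle in the linearizer of $\mathcal{R}(h)$, and shows its $\Psi_h^{-1}$-image lies on a single level $\partial E_{r'}$: fix one $w\in\partial\Delta_{\mathcal{R}(h)}(r)$, read off $r'=|\phi_h^{-1}(z)|$ for a lift $z$, and for any other $w'$ use the density $\mathcal{R}(h)^{k_j}(w')\to w$ together with Lemma~\ref{BCL1} to produce $h^{t_j}(z'_0)\to z$, forcing $|\phi_h^{-1}(z'_0)|=r'$ as well.

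The payoff of the paper's direction is that it sidesteps exactly what you flag as ``the main obstacle'': it never needs $\pi_h(C_r)$ to be a Jordan curve. In your argument that step is essential (to go from ``compact rotation-invariant, hence a union of circles'' to ``a single circle''), and it does \emph{not} follow from a Claim~A-type argument, which yields only connectedness of the open set $\pi_h((\mathcal{C}_h\cup\mathcal{C}_h^{\#})\cap\Delta_h(r))$, not that its boundary is a single curve. Your proposed fix via ``local analysis of $\Phi_h$ and $\phi_h$ near the Siegel point'' is not what is needed; the connectedness part of the relative Claim~A and the uniform angular lower bound for $(\ref{c2})$ both come from the same global argument as Claim~A (the map $\rho$ on the quotient is continuous and open, with the right boundary behaviour), not from a local expansion. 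If you want to keep your $h\to\mathcal{R}(h)$ direction, the clean repair is to work with the open set rather than its boundary: show $E((\mathcal{C}_h\cup\mathcal{C}_h^{\#})\cap\Delta_h(r))\cup\{0\}$ is open, connected, contains $0$, and is $\mathcal{R}(h)$-forward-invariant (your Lemma~\ref{BCL1} argument applied verbatim to interior points); in the $\psi$-coordinate this is then an open connected rotation-invariant neighbourhood of $0$, hence a round disk $\mathbb{D}_{r'}$, and the Jordan-curve claim is never invoked.
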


\begin{proof}
Let the notations $(\mathcal{C}_h\cup\mathcal{C}_h^{\#})/h$, $\Psi_h$, $\pi_h$
be the same as those in the proof of Lemma \ref{p3.2}. For convenience, let $C=(\mathcal{C}_h\cup\mathcal{C}_h^{\#})\cap\Delta_h$.
Claim A in the proof of Lemma \ref{p3.2} tells us that $\pi_h(C)$
is a connected open subset of $\pi_h(\mathcal{C}_h\cup\mathcal{C}_h^{\#})$
containing a neighborhood of the `0' end, not intersecting
the boundary of $(\mathcal{C}_h\cup\mathcal{C}_h^{\#})/h$ and
$$\partial(\pi_h(C))
=\pi_h((\mathcal{C}_h\cup\mathcal{C}_h^{\#})\cap\partial\Delta_h).$$
Let $\phi_h$ be a conformal map from $\mathbb{D}$ onto $\Delta_h$ such that
$\phi_h(0)=0$ and $h(\phi_h(z))=\phi_h(e^{2\pi i\alpha}z)$ for $z\in\mathbb{D}$.
We define a map
\[\rho:\pi_h(C)\to\mathbb{R}^+,\
z\mapsto|\phi_h^{-1}(\pi_h^{-1}(z))|.\]
Following Appendix A, the map $\rho$ is well defined.
It is easy to check that

\vspace{0.1cm}
\noindent(1) $\rho$ is continuous and open;\\
(2) $\rho(\pi_h(C))$
is connected;\\
(3) $\rho(z)\to0$ as $z\to$ the `0' end and $\rho(z)\to1$ as $z\to\partial(\pi_h(C))$, say, the boundary of $\pi_h(C)$.

\vspace{0.1cm}
\noindent From (1), (2) and (3), we have
$\rho(\pi_h(C))=]0,1[$.
This implies that for all $0<r<1$, there exists a nonempty open subset $I_r$ of
$\{z\in\mathbb{D}:|z|=r\}$, contained in $\phi_h^{-1}(C)$. Moreover, fix $0<r<1$,
we can further request that
there exists a positive number $\delta$ depending on $r$ such that for any $0<r_1\leq r$,
the length of $I_{r_1}$ is not less than $\delta$. Thus (\ref{c2}) holds.
From the proof of Lemma \ref{p3.2}, we know that the map $\Psi_h$ induces an isomorphism from $\pi_h(C)$ to $\Delta_{\mathcal{R}(h)}\setminus\{0\}$.
For all $0<r<1$, set
$$E_r:=\pi_h((\mathcal{C}_h\cup\mathcal{C}_h^{\#})\cap\Delta_h(r)).$$
We first prove that for all $0<r<1$, there exists $0<r'<1$ such that
\begin{equation}
\label{f1}\Psi_h^{-1}(\partial\Delta_{\mathcal{R}(h)}(r))\subset\partial E_{r'}.
\end{equation}
Indeed, given $w\in\partial\Delta_{\mathcal{R}(h)}(r)$, by Lemma \ref{p3.2} there exists
$z\in(\mathcal{C}_h\cup\mathcal{C}_h^{\#})\cap\Delta_h$ such that $\pi_h(z)=\Psi_h^{-1}(w)$.
Then there exists $0<r'<1$ such that $\phi_h^{-1}(z)\in\{z\in\mathbb{D}:|z|=r'\}$.
Thus $\Psi_h^{-1}(w)\in\partial E_{r'}$.
For any $w'\in\partial\Delta_{\mathcal{R}(h)}(r)$, there exists a sequence $\{k_j\}_{j=1}^{+\infty}$
of positive integers such that
\begin{itemize}
\item $k_j\to+\infty$ as $j\to+\infty$,
\item $\mathcal{R}(h)^{k_j}(w')\to w$ as $j\to+\infty$.
\end{itemize}
Then by Lemma \ref{p3.2}, there exist $z'_j\in(\mathcal{C}_h\cup\mathcal{C}_h^{\#})\cap\Delta_h$ $(j=0,1,2,\cdots)$ such that
\begin{itemize}
\item $\pi_h(z'_0)=\Psi_h^{-1}(w')$ and $\pi_h(z'_j)=\Psi_h^{-1}(\mathcal{R}(h)^{k_j}(w'))$($j\geq1$);
\item $z'_j\to z$ as $j\to+\infty$.
\end{itemize}
By Lemma \ref{BCL1}, there exists a sequence $\{t_j\}_{j=1}^{+\infty}$ of positive integers such that
\begin{itemize}
\item $t_j\to+\infty$ as $j\to+\infty$,
\item $h^{t_j}(z'_0)=z'_j$($j\geq1$).
\end{itemize}
Thus there exists $0<r''<1$ such that $\phi_h^{-1}(z'_j)\in\{z\in\mathbb{D}:|z|=r''\}$($j\geq0$).
Since $z'_j\to z$ as $j\to+\infty$, we have $r''=r'$ and hence
$\Psi_h^{-1}(w')=\pi_h(z'_0)\in\partial E_{r'}$. The relation (\ref{f1}) follows from arbitrariness of $w'$.
We define a map
$$\tau:]0,1[\to]0,1[,\ r\mapsto r'.$$
According to the properties of the map $\rho$,
it is easy to check that $\tau$ is a strictly increasing, surjective, continuous map.
This implies
$$\Psi_h^{-1}(\partial\Delta_{\mathcal{R}(h)}(r))=\partial E_{\tau(r)}.$$
Thus
\begin{align*}
\Delta_{\mathcal{R}(h)}(r)\setminus\{0\}&=\cup_{0<l\leq r}\partial\Delta_{\mathcal{R}(h)}(l)
=\cup_{0<l\leq r}\Psi_h(\partial E_{\tau(l)})\\
&=\Psi_h(E_{\tau(r)})=
{\rm Exp}\comp\Phi_h((\mathcal{C}_h\cup\mathcal{C}_h^{\#})\cap\Delta_h(\tau(r))).
\end{align*}
This formula implies that (\ref{c3}) and (\ref{c4}) hold.
\end{proof}

\section{The proof of Proposition \ref{p139}}
Let $P_{\theta}(z)=e^{2\pi i\theta}z+z^2$, $\theta\in B=\{z:|z-\frac{1}{2}|<\frac{1}{2}\}\subset\mathbb{C}$ and
$D_R=\{z:|z|<R\}$. Let $R$ be large enough so that for all $\theta\in B$, $D_R$ contains the finite critical value
of $P_{\theta}$ and $P_{\theta}^{-1}(D_R)$ is compactly contained in $D_R$. Then
$P_{\theta}^{-1}(D_R)$ only has a simply connected component $U_{\theta,R}$. Thus
$P_{\theta}|_{U_{\theta,R}}:U_{\theta,R}\to D_R$
is a quadratic-like mapping. Similarly, we can choose $R'>R$
such that $P_{\theta}^{-1}(D_{R'})$ only has a simply connected component $U_{\theta,R'}$, compactly contained in
$D_R$. Then $P_{\theta}|_{U_{\theta,R'}}:U_{\theta,R'}\to D_{R'}$ is a quadratic-like mapping.

Since $f_{\theta,\epsilon,d}(z)=P_{\theta}(z)+\epsilon z^d$, we can assume $\epsilon$ is small enough so that
for all $\theta\in B$, we have
$f_{\theta,\epsilon,d}(\partial U_{\theta,R'})\cap D_R=\emptyset$,
every element of $D_R$ has two preimages(counting multiplicity) under $f_{\theta,\epsilon,d}|_{U_{\theta,R'}}$
and $D_R$ contains a critical value of $f_{\theta,\epsilon,d}|_{U_{\theta,R'}}$.
Then for all $\theta\in B$,
$$V_{\theta}=(f_{\theta,\epsilon,d}|_{U_{\theta,R'}})^{-1}(D_R)=(f_{\theta,\epsilon,d})^{-1}(D_R)\cap
U_{\theta,R'}$$
is a simply connected region and is compactly contained in $D_{R}$.
Thus for all $\theta\in B$, $f_{\theta,\epsilon,d}: V_{\theta}\to D_{R}$
is a quadratic-like mapping. Furthermore,
$f_{\theta,\epsilon,d}(z) (\theta\in B,z\in V_{\theta})$ is an analytic family of quadratic-like mappings.
Then according to \cite{DH}, there exits a continuous map $\chi:B\to\mathbb{C}$ such that
$f_{\theta,\epsilon,d}|_{V_{\theta}}$ is hybrid equivalent to $g_{\theta}(z)=z^2+\chi(\theta)$ with
the conjugate map $\phi_{\theta}$.
Since $f_{\theta,\epsilon,d}|_{V_{\theta}}$ has two simple fixed points for $\theta\in B$,
$g_{\theta}$ has also two simple fixed points for $\theta\in B$.
Then the two fixed points $y_1(\theta)$, $y_2(\theta)$ of $g_{\theta}$ depends continuously on $\theta$.
Thus both the multiplier of $g_{\theta}$ at $y_1(\theta)$ and
the multiplier of $g_{\theta}$ at $y_2(\theta)$ depend continuously on $\theta$.
(According to \cite{DH}, $\phi_{\theta}$ doesn't necessarily depend continuously on $\theta$.
Thus the image $\phi_{\theta}(0)$ doesn't necessarily depend continuously on $\theta$.)
Denote by $\mathcal{B}$ the set consisting of all Brjuno numbers.
For all $\theta\in\mathcal{B}$, $0$ is a Siegel fixed point of $f_{\theta,\epsilon,d}|_{V_{\theta}}$.
Thus the multiplier of $g_{\theta}$ at $\phi_{\theta}(0)$ is the same as that of $f_{\theta,\epsilon,d}|_{V_{\theta}}$ at $0$.
Consider two sets
\[E_1=\{\theta\in B\cap\mathcal{B}:\phi_{\theta}(0)=y_1(\theta)\}\ {\rm and}\
E_2=\{\theta\in B\cap\mathcal{B}:\phi_{\theta}(0)=y_2(\theta)\}.\]
It is easy to see that at most one of $E_1$ and $E_2$ is nowhere dense.
Without loss of generality, we let $E_1$ be not nowhere dense.
Thus the closure of $E_1$ contains some open interval $I$ of  $\mathbb{R}\cap B$.
According to the continuous dependence of the multiplier of $g_{\theta}$ at $y_1(\theta)$ on $\theta$,
the multipliers of $g_{\theta}$ at $y_1(\theta)$, as $\theta$ changes in $B$, can take a certain open arc (corresponding to $I$) of the unit circle.
Then for all $\theta\in I$, the multiplier of $g_{\theta}$ at $y_1(\theta)$ is $e^{2\pi i\theta}$
and hence $g_{\theta}$ is affine conjugate to $P_{\theta}$.
By Theorem A, we have
$I\cap\mathcal{S}\not=\emptyset$. Then there exists
$\theta\in I\cap\mathcal{S}$ such that
$f_{\theta,\epsilon,d}|_{V_{\theta}}$ is hybrid equivalent to $g_{\theta}$.
Thus there exists $\theta\in \mathcal{S}$ such that
$f_{\theta,\epsilon,d}|_{V_{\theta}}$ is hybrid equivalent to $P_{\theta}$.
Note that we can replace $\mathcal{S}$ by $\mathcal{S}_1$ (or $\mathcal{S}_2$).
Thus there exists $\theta\in\mathcal{S}_1$ (or $\theta\in\mathcal{S}_2$) such that
$f_{\theta,\epsilon,d}|_{V_{\theta}}$ is hybrid equivalent to $P_{\theta}$.

\section{The proof of Proposition \ref{p239}}
Suppose that there exists $\theta\in\mathbb{R}/\mathbb{Z}$ such that $f_{\theta}$ is renormalizable, that is,
there exist two Jordan regions $U,V\subset\mathbb{C}$ such that $U$ is compactly contained in $V$ and
there exists a least integer $p\geq1$ such that $f_{\theta}^p|_U:U\to V$ is a quadratic-like mapping with
connected Julia set. Then $f_{\theta}^p|_U$ only has a critical point $c$ in $U$ and $U$ contains the postcritical
set $\mathcal{O}_{f_{\theta}^p|_U}(c)$ of $c$. Assume that $0\in U$. Then $0\in V$ and hence
$(f_{\theta}^p|_U)^{-1}(0)$ has exactly two elements (counting multiplicity). Since
$f_{\theta}^{-1}(0)=\{0,-1\}$ and $-1$ is a critical point of $f_{\theta}$,
all elements in $(f_{\theta}^p|_U)^{-1}(0)$, except $0$, are critical points of $f_{\theta}^p|_U$.
Thus $(f_{\theta}^p|_U)^{-1}(0)$ has at least three elements (counting multiplicity), and
this is a contradiction. Thus $0\not\in U$. To complete the proof, we only need to prove $0\in U$.

We first observe that $f_{\theta}$ has two finite critical points $c_0=-\frac{1}{3}$ and $c_1=-1$
with the corresponding critical values $-\frac{4}{27}e^{2\pi i\theta}$ and $0$.
Thus we have
$$\bigcup\limits_{n\geq1}(f_{\theta}^p|_U)^n(c)\subset\bigcup\limits_{n\geq1}f_{\theta}^n(c_0)\
{\rm or}\
\bigcup\limits_{n\geq1}(f_{\theta}^p|_U)^n(c)\subset\bigcup\limits_{n\geq1}f_{\theta}^n(c_1).$$
If
$$\bigcup\limits_{n\geq1}(f_{\theta}^p|_U)^n(c)\subset\bigcup\limits_{n\geq1}f_{\theta}^n(c_1),$$
then $\bigcup\limits_{n\geq1}(f_{\theta}^p|_U)^n(c)=\{0\}$ and hence $0=f_{\theta}^p|_U(c)\in U$.
If
$$\bigcup\limits_{n\geq1}(f_{\theta}^p|_U)^n(c)\subset\bigcup\limits_{n\geq1}f_{\theta}^n(c_0),$$
then when $\theta$ is a rational number, $0$ is a parabolic fixed point of $f_{\theta}$. So
$f_{\theta}^n(c_0)\xrightarrow[n\to+\infty]{}0$. This implies
$(f_{\theta}^p|_U)^n(c)\xrightarrow[n\to+\infty]{}0$. Thus $0\in\mathcal{O}_{f_{\theta}^p|_U}(c)\subset U$.
When $\theta$ is a irrational number, $0$ is a Cremer point or a Siegel point of $f_{\theta}$. Then
$0$ is also a Cremer point or a Siegel point of $f_{\theta}^p$. According to [Theorem 1.3,\cite{ST00}],
there exists a recurrent critical point $c'$ of $f_{\theta}^p$ such that
when $0$ is a Cremer point of $f_{\theta}$,
$0$ is contained in the postcritical set of $c'$ under $f_{\theta}^p$, that is,
$0\in\mathcal{O}_{f_{\theta}^p}(c')$;
when $0$ is a Siegel point of $f_{\theta}$,
$\partial\Delta_{\theta}\subset\mathcal{O}_{f_{\theta}^p}(c')$,
where $\Delta_{\theta}$ is the Siegel disk of $f_{\theta}^p$ at $0$.
It is easy to see that there exists $0\leq k\leq p-1$ such that
$f_{\theta}^k(\mathcal{O}_{f_{\theta}^p}(c'))\subset\mathcal{O}_{f_{\theta}^p|_U}(c)$.
Then when $0$ is a Cremer point of $f_{\theta}$,
$0=f_{\theta}^k(0)\in f_{\theta}^k(\mathcal{O}_{f_{\theta}^p}(c'))\subset\mathcal{O}_{f_{\theta}^p|_U}(c)\subset U$;
when $0$ is a Siegel point of $f_{\theta}$,
$\partial\Delta_{\theta}=f_{\theta}^k(\partial\Delta)\subset f_{\theta}^k(\mathcal{O}_{f_{\theta}^p}(c'))\subset\mathcal{O}_{f_{\theta}^p|_U}(c)\subset U$. Then
\[\mathbb{C}\setminus\overline{U}=
(\mathbb{C}\setminus\overline{U})\cap(\Delta_{\theta}\cup\partial\Delta_{\theta}\cup
(\mathbb{C}\setminus\overline{\Delta}_{\theta}))=((\mathbb{C}\setminus\overline{U})\cap\Delta_{\theta})
\cup((\mathbb{C}\setminus\overline{U})\cap(\mathbb{C}\setminus\overline{\Delta}_{\theta}))
.\]
It follows from the connectedness of $\mathbb{C}\setminus\overline{U}$ that
$(\mathbb{C}\setminus\overline{U})\cap\Delta_{\theta}=\emptyset$ or $(\mathbb{C}\setminus\overline{U})\cap(\mathbb{C}\setminus\overline{\Delta}_{\theta})=\emptyset$.
Since $\Delta_{\theta}$ is bounded,
$(\mathbb{C}\setminus\overline{U})\cap(\mathbb{C}\setminus\overline{\Delta}_{\theta})\not=\emptyset$.
Then $(\mathbb{C}\setminus\overline{U})\cap\Delta_{\theta}=\emptyset$ and hence
$\Delta_{\theta}\subset\overline{U}$. Since $\Delta_{\theta}$ is an open set,
$\Delta_{\theta}\subset U$. Thus $0\in\Delta_{\theta}\subset U$.

\section{The proof of Theorem \ref{T20}}
In this section, we will prove Theorem \ref{T20}. Before doing it, we give three lemmas.

\begin{lemma}[Buff and Ch\'eritat]
\label{T1.1s}Assume $\alpha:=[a_0,a_1,a_2,\cdots]$ and $\theta:=[0,t_1,t_2,\cdots]$ are Brjuno numbers and let $p_n/q_n$ be the approximants to $\alpha$. Assume
\[\alpha_n:=[a_0,a_1,\cdots,a_n,A_n,t_1,t_2,\cdots]\]
with $(A_n)$ a sequence of positive integers such that
\begin{equation*}
\limsup\limits_{n\to+\infty}\sqrt[q_n]{\log A_n}\leq1.
\end{equation*}
For any $0<r<1$, let $\Delta(r)$ be the r-disk of the Siegel disk of $P_{\alpha}$ and
let $\Delta_n'(r)$ be the Siegel disk of the restriction of $P_{\alpha_n}$ to $\Delta(r)$. For any nonempty open set $U\subset\Delta(r)$,
\[\liminf\limits_{n\to+\infty}{\rm dens}_{U}(\Delta_n'(r))\geq\frac{1}{2}.\]
\end{lemma}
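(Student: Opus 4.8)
The plan is to deduce this ``$r$-disk'' version of Theorem B from Theorem B itself, together with the quasiconformal conjugacy furnished by the holomorphic motion in the family of quadratic polynomials. Recall that Theorem B asserts $\liminf_n {\rm dens}_U(\Delta_n')\geq\tfrac12$ for any nonempty open $U\subset\Delta$, where $\Delta$ is the full Siegel disk of $P_\alpha$ and $\Delta_n'$ the Siegel disk of $P_{\alpha_n}|_\Delta$. I would first fix a linearizing coordinate $\phi:\mathbb{D}\to\Delta$ with $\phi(0)=0$ and $P_\alpha(\phi(z))=\phi(e^{2\pi i\alpha}z)$, so that $\Delta(r)=\phi(\mathbb{D}_r)$ by definition. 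The key observation is that on the $r$-disk the dynamics of $P_{\alpha_n}$ is already ``honestly rotational'' near $0$: since $\alpha_n\to\alpha$ and the $\alpha_n$ are Brjuno, $P_{\alpha_n}$ has its own Siegel disk, and the restriction $P_{\alpha_n}|_{\Delta(r)}$ has a well-defined Siegel disk $\Delta_n'(r)$ centered at $0$, which is simply $\Delta_n'\cap\Delta(r)$ intersected appropriately — more precisely $\Delta_n'(r)$ is the connected component of $0$ in $\Delta_n'\cap\Delta(r)$ (and one checks it equals $\Delta_n'$ truncated to lie inside $\Delta(r)$ via the $P_{\alpha_n}$-linearizing coordinate on $\Delta_n'$).

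The heart of the argument is a density-transfer step. I would show:
\begin{equation}
\label{transfer}
{\rm dens}_U\bigl(\Delta_n'(r)\bigr)\ \geq\ {\rm dens}_{\widetilde U}\bigl(\Delta_n'\bigr)\ -\ o(1)\qquad(n\to+\infty)
\end{equation}
for a suitable nonempty open $\widetilde U\subset\Delta$ associated to $U$, or more cleanly, that for any nonempty open $U\subset\Delta(r)$ one has $\liminf_n {\rm dens}_U(\Delta_n'(r))=\liminf_n {\rm dens}_U(\Delta_n')$. The point is that the part of $\Delta_n'$ that is \emph{lost} by intersecting with $\Delta(r)$, namely $\Delta_n'\setminus\Delta(r)$, is far from $U$: it lies in the annular region $\Delta\setminus\overline{\Delta(r)}$, whose closure is disjoint from the compact set $\overline U\subset\Delta(r)$. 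Hence for any $U$ compactly contained in $\Delta(r)$, $U\cap\Delta_n'(r)=U\cap\Delta_n'$ for all large $n$ once we know $\Delta_n'\to\Delta$ in the Hausdorff/area sense on compacts — which is exactly the input from \cite{BC} underlying Theorem B (the perturbed Siegel disks fill up $\Delta$). Since an arbitrary nonempty open $U\subset\Delta(r)$ contains a nonempty open $U_0$ compactly contained in $\Delta(r)$ and ${\rm dens}_U\geq \tfrac{{\rm area}(U_0)}{{\rm area}(U)}{\rm dens}_{U_0}$ is not quite enough, I would instead run the whole argument with $U$ itself: decompose $U\cap\Delta_n'(r)$ and use ${\rm area}(U\setminus\Delta(r))$ contributions, which are controlled because $U\subset\Delta(r)$ makes them vanish, so in fact $\Delta_n'(r)\cap U = \Delta_n'\cap U$ for $n$ large and \eqref{transfer} becomes an equality with $o(1)=0$ and $\widetilde U=U$.

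Given this, Lemma \ref{T1.1s} follows immediately: $\liminf_n {\rm dens}_U(\Delta_n'(r))=\liminf_n {\rm dens}_U(\Delta_n')\geq\tfrac12$ by Theorem B applied to the nonempty open set $U\subset\Delta(r)\subset\Delta$. The main obstacle I anticipate is the \emph{precise identification} $\Delta_n'(r)\cap U=\Delta_n'\cap U$ for large $n$: this requires knowing that $\Delta_n'(r)$ — defined as the Siegel disk of $P_{\alpha_n}$ restricted to $\Delta(r)$ — really is $\Delta_n'$ cut down to $\Delta(r)$ rather than something strictly smaller due to the restriction destroying part of the rotation domain. One must argue that a point $z\in\Delta_n'\cap\Delta(r)$ whose entire forward and backward $P_{\alpha_n}$-orbit stays in $\Delta_n'$ will, once close enough to $0$ (i.e. for the inner part of $\Delta(r)$), have its relevant orbit stay inside $\Delta(r)$ as well — plus a connectedness argument near $0$ as in Corollary \ref{c3.1} and Claim A. This is where the structure already developed in the paper (Lemma \ref{L3.2}, Corollary \ref{c3.1}) for the Inou--Shishikura setting has a direct analogue for quadratic polynomials, and invoking \cite{BC} for the convergence $\Delta_n'\to\Delta$ closes the gap.
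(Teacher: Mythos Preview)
Your approach has a genuine gap. The identity $U\cap\Delta_n'(r)=U\cap\Delta_n'$ you aim for is false, and the confusion enters at the step where you describe ``the part of $\Delta_n'$ that is lost by intersecting with $\Delta(r)$, namely $\Delta_n'\setminus\Delta(r)$''. The Siegel disk $\Delta_n'(r)$ of the restriction $P_{\alpha_n}|_{\Delta(r)}$ is \emph{not} $\Delta_n'\cap\Delta(r)$ (nor its component at $0$); it is the largest $P_{\alpha_n}$-linearization disk contained in $\Delta(r)$, i.e.\ $\psi_n(\mathbb{D}_{s_n})$ where $\psi_n:\mathbb{D}\to\Delta_n'$ is the $P_{\alpha_n}$-linearizer and $s_n=\sup\{s:\psi_n(\mathbb{D}_s)\subset\Delta(r)\}$. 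The lost part is the annulus $\Delta_n'\setminus\Delta_n'(r)=\psi_n(\mathbb{D}\setminus\mathbb{D}_{s_n})$, which strictly contains $\Delta_n'\setminus\Delta(r)$ and can meet $U$ substantially: a point $z\in U\cap\Delta_n'$ lies on a $P_{\alpha_n}$-invariant curve $\psi_n(\{|w|=\rho\})$, and nothing prevents that curve from leaving $\Delta(r)$ even when $z$ is well inside. Indeed, the very content of Theorem~B is that the outer $P_{\alpha_n}$-invariant curves in $\Delta_n'$ wind through \emph{all} of $\Delta$ with limiting density $\tfrac12$; they are not confined near $\partial\Delta$.

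Your proposed rescue --- invoking ``$\Delta_n'\to\Delta$ in the Hausdorff/area sense on compacts'' from \cite{BC} --- is also incorrect: Theorem~B only gives $\liminf$ density $\ge\tfrac12$, and in Buff--Ch\'eritat's construction this is sharp, so no such convergence holds. Consequently Theorem~B as a black box does not control ${\rm dens}_U(\Delta_n'\setminus\Delta_n'(r))$, and the deduction breaks down. The paper does not attempt this route at all: the accompanying remark states that Lemma~\ref{T1.1s} is read off directly from the \emph{proof} of \cite[Proposition~6]{BC}, whose explosion-of-cycles analysis describes the fingered structure of the perturbed Siegel disks at every scale, and thus yields the $r$-disk statement with no extra work. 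To prove the lemma you must go back to that argument rather than try to bootstrap from the statement of Theorem~B.
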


\begin{remark}
{\rm It is easy to see that Lemma \ref{T1.1s} implies Theorem B, and from the proof of
[Proposition 6, \cite{BC}], one obtains Lemma \ref{T1.1s} easily.}
\end{remark}

\begin{lemma}
\label{L4.1}Let $\Omega_1$, $\Omega_2$ be two bounded nonempty connected open sets, $\{E_n\}_{n=1}^{+\infty}$ is a sequence of open subsets of $\Omega_1$ and $\phi:\Omega_1\to\Omega_2$ is a quasiconformal map. If there exists a positive number c such that for any nonempty open set $U\subset\Omega_1$,
\[\liminf\limits_{n\to+\infty}{\rm dens}_{U}(E_n)\geq c,\]
then for any nonempty open set
$U\subset\Omega_2$,
\[\liminf\limits_{n\to+\infty}{\rm dens}_{U}(\phi(E_n))\geq c.\]
\end{lemma}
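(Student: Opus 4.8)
The statement to prove is Lemma \ref{L4.1}: if a sequence of open sets $E_n \subset \Omega_1$ has the property that $\liminf_n \mathrm{dens}_U(E_n) \geq c$ for \emph{every} nonempty open $U \subset \Omega_1$, then the pushforwards $\phi(E_n) \subset \Omega_2$ satisfy the same density lower bound with respect to every nonempty open $V \subset \Omega_2$. The key tool is the \emph{uniform area distortion} (absolute continuity with bounds) of a fixed $K$-quasiconformal map: there is a constant $C = C(K) \geq 1$ such that for every measurable $A$ and every "nice" reference set (a disk or Jordan region) $W$ with $A \subset W$, one has $C^{-1}\,\mathrm{dens}_W(A) \leq \mathrm{dens}_{\phi(W)}(\phi(A))$ up to a power; more precisely the Astala-type inequality $\mathrm{area}(\phi(A))/\mathrm{area}(\phi(W)) \geq (\mathrm{area}(A)/\mathrm{area}(W))^{K}$ (or any quantitative two-sided bound) does the job. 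I would, however, avoid invoking the sharp exponent and instead argue via a localization/exhaustion that only uses that $\phi$ and $\phi^{-1}$ map sets of positive measure to sets of positive measure with locally comparable "distortion" on small balls.

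\textbf{Step 1: reduce to small balls.} Fix a nonempty open $V \subset \Omega_2$ and $\varepsilon > 0$. It suffices to find one point $y_0 \in V$ and a small ball $B(y_0,\rho) \subset V$ on which $\liminf_n \mathrm{dens}_{B(y_0,\rho)}(\phi(E_n)) \geq c - \varepsilon$; then by Lebesgue density applied to an arbitrary $V$ and a standard Vitali-type covering of $V$ by such balls (plus the fact that the bound must hold on \emph{every} open subset, so in particular we can shrink $V$ first), one upgrades to $\mathrm{dens}_V(\phi(E_n)) \geq c - \varepsilon$ for all small enough scales, then let $\varepsilon \to 0$. Actually the cleanest route: prove the contrapositive-free statement that for every $V$, $\liminf_n \mathrm{dens}_V(\phi(E_n)) \geq c$; if this failed for some $V$ we could pass to a subsequence with $\mathrm{dens}_V(\phi(E_n)) \leq c - \varepsilon$, and a density-point argument inside $V$ would then produce an open $V' \subset V$ where the deficit is still present but localized.

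\textbf{Step 2: transfer the density through $\phi$.} Set $U = \phi^{-1}(V) \subset \Omega_1$, a nonempty open set. The hypothesis gives $\liminf_n \mathrm{dens}_U(E_n) \geq c$. I want to conclude $\liminf_n \mathrm{dens}_V(\phi(E_n)) \geq c$. Write $\mathrm{dens}_U(E_n) = 1 - \mathrm{dens}_U(U \setminus E_n)$, so $\limsup_n \mathrm{dens}_U(U \setminus E_n) \leq 1 - c$. Similarly $\mathrm{dens}_V(\phi(E_n)) = 1 - \mathrm{dens}_V(V \setminus \phi(E_n)) = 1 - \mathrm{dens}_V(\phi(U \setminus E_n))$ since $\phi$ is a bijection $U \to V$. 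So the claim is equivalent to: $\limsup_n \mathrm{dens}_V(\phi(U\setminus E_n)) \leq 1-c$. The obstacle here is that quasiconformal maps do \emph{not} preserve density ratios uniformly: a set of relative measure $\delta$ in $U$ can blow up to relative measure $\delta^{1/K}$ in $V$, which does not tend to $1-c$ as $\delta \to 1-c$. This is the main difficulty, and it is why the hypothesis is quantified over \emph{all} open subsets $U$, not just $U = \phi^{-1}(V)$.

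\textbf{Step 3: exploit the quantification over all open subsets via exhaustion.} The resolution is that a density bound valid on \emph{every} open subset of $\Omega_1$ is much stronger: by a Lebesgue density point argument it forces $\mathrm{area}(\Omega_1 \setminus \liminf E_n)$-type control at \emph{every} scale. Concretely, fix $V$ and $\varepsilon>0$. Cover $V$ (up to a set of measure $<\varepsilon\,\mathrm{area}(V)$) by finitely many disjoint open sets $V_1,\dots,V_m$ on each of which $\phi^{-1}$ has small oscillation, i.e. $\phi^{-1}(V_j)$ is comparable to a ball $B_j$ with $\mathrm{area}(\phi^{-1}(A))/\mathrm{area}(\phi^{-1}(V_j))$ and $\mathrm{area}(A)/\mathrm{area}(V_j)$ within a factor $(1+\varepsilon)$ for all measurable $A \subset V_j$ — this uses that $\phi$ is a.e. differentiable with nonvanishing Jacobian (quasiconformal maps are in $W^{1,2}_{\mathrm{loc}}$ and satisfy Lusin (N) and (N$^{-1}$)), so Lebesgue density points of $\mathrm{Jac}\,\phi^{-1}$ provide such $V_j$ at small scales. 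On each $V_j$, apply the hypothesis with $U = \phi^{-1}(V_j)$: $\liminf_n \mathrm{dens}_{\phi^{-1}(V_j)}(E_n) \geq c$, hence $\liminf_n \mathrm{dens}_{V_j}(\phi(E_n)) \geq c/(1+\varepsilon)$ by the near-conformality on $V_j$. Summing over $j$ and using $\mathrm{area}(V \setminus \bigcup V_j) < \varepsilon\,\mathrm{area}(V)$ gives $\liminf_n \mathrm{dens}_V(\phi(E_n)) \geq (c/(1+\varepsilon))(1-\varepsilon)$; let $\varepsilon \to 0$. (The interchange of $\liminf$ with the finite sum is legitimate since $\liminf_n \sum_j a_{n,j} \geq \sum_j \liminf_n a_{n,j}$ for nonnegative $a_{n,j}$.) The one genuinely technical point is the existence, at arbitrarily small scales and covering all but $\varepsilon$ of $V$, of pieces $V_j$ on which $\phi$ is $(1+\varepsilon)$-conformal in the averaged sense above; I would cite the differentiability-a.e. and absolute-continuity properties of quasiconformal maps (Lusin (N)/(N$^{-1}$), Jacobian in $L^1_{\mathrm{loc}}$ and positive a.e.) together with the Lebesgue density theorem applied to the Radon–Nikodym derivative of $\phi_*(\text{Lebesgue})$, which is exactly where "uniform area distortion of quasiconformal mappings" — the phrase already used in the introduction of this paper — enters.
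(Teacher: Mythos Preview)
Your Step~3 is correct and carries the same core idea as the paper's proof: both localize to pieces on which the Jacobian of the quasiconformal map is approximately constant, apply the open-set hypothesis on each piece, and sum. The executions differ. The paper writes the density directly via the Jacobian formula $\mathrm{area}(U\cap\phi(E_n))=\int_{\phi^{-1}(U)\cap E_n}J_\phi$, approximates $J_\phi$ in $L^1(\phi^{-1}(U))$ by simple functions $T_k=\sum a_l\chi_{E_{kl}}$, and then for each measurable level set $E_{kl}$ uses outer regularity of Lebesgue measure to find open $V_q\supset E_{kl}$ on which the hypothesis applies. Your route instead picks the pieces geometrically as small open balls $V_j\subset V$ centered at Lebesgue points of $J_{\phi^{-1}}$, covering $V$ up to measure $\varepsilon\,\mathrm{area}(V)$ by a Vitali argument. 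Both land on the same summation inequality; the paper's version is marginally more elementary (simple-function approximation plus outer regularity, no Vitali or density-point apparatus), while yours is more geometric. One minor imprecision: at a Lebesgue point you only obtain an \emph{additive} comparison $\mathrm{dens}_{V_j}(\phi(E_n))\ge(1-\varepsilon)\,\mathrm{dens}_{\phi^{-1}(V_j)}(E_n)-\varepsilon$, not the uniform multiplicative factor $(1+\varepsilon)$ you state; this is harmless for the conclusion. Steps~1--2 are exploratory and can be discarded; Step~3 alone is the proof.
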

\begin{proof}
According to Jacobian Formula of quasiconformal maps \cite{HA}, we have that for any nonempty open set $U\subset\Omega_2$,
\[\int_{\phi^{-1}(U)}J_{\phi}(z)dxdy={\rm area}(U)\]
and
\[\int_{\phi^{-1}(U)\cap E_n}J_{\phi}(z)dxdy
={\rm area}(U\cap\phi(E_n)).\]
Let $\{T_k\}_{k=1}^{+\infty}$ be a sequence of simple real value functions with $T_k(z)=\sum_{l=1}^{t_k}a_l\mathcal{X}_{E_{kl}}$,
where $a_1,\cdots,a_{t_k}$ are real numbers, $E_{k1},E_{k2},\cdots,E_{kt_k}$ are pairwise disjoint measurable sets with $\cup_{j=1}^{t_k}E_{kj}=\phi^{-1}(U)$ and $\mathcal{X}_{E_{kl}}$($l=1,\cdots,t_k$) are characteristic functions of $E_{kl}$($l=1,\cdots,t_k$),
such that
\[\lim\limits_{k\to+\infty}\int_{\phi^{-1}(U)}|J_{\phi}(z)-T_k(z)|dxdy=0.\]
Thus to complete the proof of the lemma, we only need to prove for large $k$,
\[\liminf\limits_{n\to+\infty}\frac{\int_{\phi^{-1}(U)\cap E_n}T_k(z)dxdy}
{\int_{\phi^{-1}(U)}T_k(z)dxdy}\geq c.\]
To prove the above formula, it suffices to prove that for all $k$,
\[\liminf\limits_{n\to+\infty}\int_{\phi^{-1}(U)\cap E_n}\mathcal{X}_{E_{kj}}(z)dxdy
\geq c\int_{\phi^{-1}(U)}\mathcal{X}_{E_{kj}}(z)dxdy\]
for $j=1,2,\cdots,t_k$.
We choose a sequence $\{V_q\}_{q=1}^{+\infty}$ of open subsets of $\phi^{-1}(U)$ such that
$$V_q\supset E_{kj}\cap\phi^{-1}(U)(q\geq1)\ {\rm and}\
\lim\limits_{q\to+\infty}{\rm area}(V_q\setminus (E_{kj}\cap\phi^{-1}(U)))=0.$$
For any $\epsilon>0$, there exists $K_0>0$ such that for all $q>K_0$,
\begin{align*}
\liminf\limits_{n\to+\infty}\int_{\phi^{-1}(U)\cap E_n}\mathcal{X}_{E_{kj}}(z)dxdy
&=\liminf\limits_{n\to+\infty}{\rm area}(\phi^{-1}(U)\cap E_{kj}\cap E_n)\\
&\geq\liminf\limits_{n\to+\infty}{\rm area}(V_q\cap E_n)-\epsilon\\
&\geq c\cdot{\rm area}(V_q)-\epsilon\\
&\geq c\int_{\phi^{-1}(U)}\mathcal{X}_{E_{kj}}(z)dxdy-\epsilon.
\end{align*}
This gives us
\[\liminf\limits_{n\to+\infty}\int_{\phi^{-1}(U)\cap E_n}\mathcal{X}_{E_{kj}}(z)dxdy
\geq c\int_{\phi^{-1}(U)}\mathcal{X}_{E_{kj}}(z)dxdy-\epsilon.\]
Let $\epsilon\to0$, the proof is completed.
\end{proof}

\begin{lemma}
\label{L4.2}Let $\alpha$ and $\alpha_n$ $(n=1,2,\cdots)$ be Brjuno numbers such that $\alpha_n\to\alpha$ as $n\to+\infty$. Suppose that $f_{\alpha}=h\comp e^{2\pi i\alpha}$ and $f_{\alpha_n}=h\comp e^{2\pi i\alpha_n}$
for some $h\in\mathcal{IS}_0$ or $f_{\alpha}=Q_{\alpha}$ and $f_{\alpha_n}=Q_{\alpha_n}$.
For all $0<r<1$ and $m\geq1$, the notation $\Delta_{f_{\alpha}}(r)${\rm(}or
$\Delta_{\mathcal{R}^m(f_{\alpha})}(r)${\rm)} is the r-disk of $\Delta_{f_{\alpha}}${\rm(}or $\Delta_{\mathcal{R}^m(f_{\alpha})}${\rm)} and
$\Delta'_{f_{\alpha_n}}(r)${\rm(}or $\Delta_{\mathcal{R}^m(f_{\alpha})}'(r)${\rm)} is
the Siegel disk of the restriction of $f_{\alpha_n}${\rm(}or $\mathcal{R}^m(f_{\alpha_n})${\rm)} to $\Delta_{f_{\alpha}}(r)${\rm(}or $\Delta_{\mathcal{R}^m(f_{\alpha})}(r)${\rm)} centering at 0.
Then for all $m\geq1$ and positive number $c$, the following two conditions are equivalent:

\vspace{0.1cm}
\noindent{\rm(a)} for all $0<r<1$ and any nonempty open set $U\subset\Delta_{f_{\alpha}}(r)$,
\[\liminf\limits_{n\to+\infty}{\rm dens}_U\Delta'_{f_{\alpha_n}}(r)\geq c;\]
{\rm(b)} for all $0<r<1$ and any nonempty open set $U\subset\Delta_{\mathcal{R}^m(f_{\alpha})}(r)$,
\[\liminf\limits_{n\to+\infty}{\rm dens}_U\Delta'_{\mathcal{R}^m(f_{\alpha_n})}(r)\geq c.\]
\end{lemma}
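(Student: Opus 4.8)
The plan is to reduce everything to the case $m=1$ and then transport the density estimate across a single renormalization step by means of Corollary \ref{c3.1} and Lemma \ref{L4.1}. First I would note that it suffices to treat $m=1$: since $\mathcal R$ maps $\mathcal{IS}_N\cup\{Q_\alpha\}$ into $\mathcal{IS}_N$ and is continuous (Lemma \ref{l2}), the pair $\bigl(\mathcal R^{m-1}(f_\alpha),\mathcal R^{m-1}(f_{\alpha_n})\bigr)$ again falls under the hypotheses with $\mathcal R^{m-1}(f_{\alpha_n})\to\mathcal R^{m-1}(f_\alpha)$, so the case $m=1$ applied to it upgrades the statement at level $m-1$ to the statement at level $m$, and iterating gives all $m$. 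The two implications for $m=1$ are proved symmetrically, so I would concentrate on $(\mathrm a)\Rightarrow(\mathrm b)$.

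The structural remark that makes Corollary \ref{c3.1} usable on the perturbations is that for each $n$ the restricted Siegel disk $\Delta'_{f_{\alpha_n}}(r)$, being the Siegel disk of $f_{\alpha_n}|_{\Delta_{f_\alpha}(r)}$, is simply the maximal $r$-disk $\Delta_{f_{\alpha_n}}(\rho)$ of the genuine Siegel disk $\Delta_{f_{\alpha_n}}$ that is contained in $\Delta_{f_\alpha}(r)$, namely $\rho_n=\sup\{\rho:\Delta_{f_{\alpha_n}}(\rho)\subset\Delta_{f_\alpha}(r)\}$; likewise $\Delta'_{\mathcal R(f_{\alpha_n})}(r)=\Delta_{\mathcal R(f_{\alpha_n})}(\tilde\rho_n)$. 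Thus Corollary \ref{c3.1} may be invoked with $h=f_{\alpha_n}$ as well as with $h=f_\alpha$. Now fix $0<r<1$ and a nonempty open $\tilde U\subset\Delta_{\mathcal R(f_\alpha)}(r)$. By the relation (\ref{c4}) in Corollary \ref{c3.1} (for $h=f_\alpha$) there is $0<r''<1$ with $\Delta_{\mathcal R(f_\alpha)}(r)={\rm Exp}\comp\Phi_{f_\alpha}\bigl((\mathcal C_{f_\alpha}\cup\mathcal C_{f_\alpha}^{\#})\cap\Delta_{f_\alpha}(r'')\bigr)\cup\{0\}$. On the interior of $\mathcal C_{f_\alpha}\cup\mathcal C_{f_\alpha}^{\#}$ the map ${\rm Exp}\comp\Phi_{f_\alpha}$ is univalent (there ${\rm Re}\,\Phi_{f_\alpha}$ runs over an interval of length $1$) and anti-holomorphic, hence $1$-quasiconformal onto its image $\Omega_2$, which is $\Delta_{\mathcal R(f_\alpha)}(r)$ with $\{0\}$ and one radial slit removed, a bounded connected open set. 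Writing $\Omega_1$ for the interior of $(\mathcal C_{f_\alpha}\cup\mathcal C_{f_\alpha}^{\#})\cap\Delta_{f_\alpha}(r'')$ and $W$ for the preimage of $\tilde U\cap\Omega_2$ under ${\rm Exp}\comp\Phi_{f_\alpha}|_{\Omega_1}$, hypothesis $(\mathrm a)$ at radius $r''$ gives $\liminf_n{\rm dens}_U(\Delta'_{f_{\alpha_n}}(r'')\cap\Omega_1)\ge c$ for every nonempty open $U\subset\Omega_1$ (as $\Omega_1\subset\Delta_{f_\alpha}(r'')$). Then Lemma \ref{L4.1} applied to ${\rm Exp}\comp\Phi_{f_\alpha}\colon\Omega_1\to\Omega_2$ yields $\liminf_n{\rm dens}_{\tilde U}\bigl({\rm Exp}\comp\Phi_{f_\alpha}(\Delta'_{f_{\alpha_n}}(r'')\cap\Omega_1)\bigr)\ge c$, the discarded slit and point having zero area.

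The proof is thus complete once one establishes the identity ${\rm Exp}\comp\Phi_{f_\alpha}\bigl(\Delta'_{f_{\alpha_n}}(r'')\cap\Omega_1\bigr)=\Delta'_{\mathcal R(f_{\alpha_n})}(r)\cap\Omega_2$, i.e.\ that for $z$ in the interior of the fundamental domain one has $z\in\Delta'_{f_{\alpha_n}}(r'')$ if and only if ${\rm Exp}\comp\Phi_{f_\alpha}(z)\in\Delta'_{\mathcal R(f_{\alpha_n})}(r)$. I expect this to be the main obstacle. Its skeleton is the argument of Lemma \ref{p3.2}: by Lemma \ref{BCL1} an iterate of $w={\rm Exp}\comp\Phi_{f_{\alpha_n}}(\tilde z)$ under $\mathcal R(f_{\alpha_n})$ that escapes $\Delta_{\mathcal R(f_\alpha)}(r)$ forces an iterate of $\tilde z$ under $f_{\alpha_n}$ to escape the relevant region, and conversely. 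The genuine difficulty, absent in Lemma \ref{p3.2}, is that $\mathcal R(f_{\alpha_n})$ is built from the perturbed Fatou coordinate $\Phi_{f_{\alpha_n}}$ whereas the transfer above is written through $\Phi_{f_\alpha}$; these must be reconciled via the transition map $\Phi_{f_\alpha}^{-1}\comp\Phi_{f_{\alpha_n}}$, which is a partial conjugacy between $f_{\alpha_n}$ and $f_\alpha$ on the petals and therefore carries the fundamental-domain trace of each $r$-disk of $\Delta_{f_{\alpha_n}}$ onto the fundamental-domain trace of an $r$-disk of $\Delta_{f_\alpha}$, up to a reparametrization of radii; one must check that this reparametrization is compatible with the containments defining $\rho_n$ and $\tilde\rho_n$, so that the identity above is an exact equality of sets and not merely an approximate one — an error of positive measure cannot be tolerated here, since Siegel disks are discontinuous and $\Delta'_{f_{\alpha_n}}(r'')$ genuinely fails to converge to $\Delta_{f_\alpha}(r'')$.

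Granting this, the reverse implication $(\mathrm b)\Rightarrow(\mathrm a)$ is analogous. Using the relation (\ref{c2}) in Corollary \ref{c3.1} one writes $\Delta_{f_\alpha}(r)$ as a finite union of forward $f_\alpha$-iterates of a fundamental domain; one transfers the estimate on each open piece $U\cap f_\alpha^{\,j}(\text{int}(\mathcal C_{f_\alpha}\cup\mathcal C_{f_\alpha}^{\#}))$ through ${\rm Exp}\comp\Phi_{f_\alpha}\comp f_\alpha^{-j}$ together with the relation (\ref{c3}) and hypothesis $(\mathrm b)$, and reassembles by averaging — the weights ${\rm area}\bigl(U\cap f_\alpha^{\,j}(\cdot)\bigr)$ being independent of $n$, a finite sum of nonnegative contributions each with $\liminf\ge c$ times its weight again has $\liminf\ge c$. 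The only extra bookkeeping is the passage between $f_\alpha$- and $f_{\alpha_n}$-iterates of the pieces, handled using $f_{\alpha_n}\to f_\alpha$ uniformly on compact subsets of $\Delta_{f_\alpha}$ and the $f_{\alpha_n}$-invariance of $\Delta'_{f_{\alpha_n}}(r)$.
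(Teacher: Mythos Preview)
Your overall architecture---reduce to $m=1$, transport the density through ${\rm Exp}\comp\Phi$ using Corollary~\ref{c3.1} and Lemma~\ref{L4.1}, then reassemble via the finite covering~(\ref{c2})---is exactly the paper's. The gap is precisely where you say it is, and your proposed resolution does not close it.

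The exact identity ${\rm Exp}\comp\Phi_{f_\alpha}\bigl(\Delta'_{f_{\alpha_n}}(r'')\cap\Omega_1\bigr)=\Delta'_{\mathcal R(f_{\alpha_n})}(r)\cap\Omega_2$ is not available: $\mathcal R(f_{\alpha_n})$ is built from $\Phi_{f_{\alpha_n}}$, so Lemma~\ref{p3.2} and Corollary~\ref{c3.1} applied to $h=f_{\alpha_n}$ relate $\Delta'_{\mathcal R(f_{\alpha_n})}$ to ${\rm Exp}\comp\Phi_{f_{\alpha_n}}$, not to ${\rm Exp}\comp\Phi_{f_\alpha}$. The transition $\Phi_{f_\alpha}^{-1}\comp\Phi_{f_{\alpha_n}}$ is a conformal map conjugating the two dynamics on the petals, but it has no reason to send an $r$-disk trace of $\Delta_{f_{\alpha_n}}$ to an $r$-disk trace of $\Delta_{f_\alpha}$ at any fixed radius, so the compatibility you ask for fails as an exact statement.

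The paper avoids this by two changes. First, it introduces a buffer: to prove~(b') it fixes $0<r'<r$ and only asks for the density bound on open $U\subset\Delta_{\mathcal R(f_\alpha)}(r')$ against $\Delta'_{\mathcal R(f_{\alpha_n})}(r)$; this already suffices for~(b'). Second, it pushes forward with the \emph{perturbed} coordinate ${\rm Exp}\comp\Phi_{f_{\alpha_n}}$ rather than the fixed ${\rm Exp}\comp\Phi_{f_\alpha}$. With these choices the needed statement becomes an \emph{inclusion} for large $n$,
\[
{\rm Exp}\comp\Phi_{f_{\alpha_n}}(C_n'(r''))\subset\Delta'_{\mathcal R(f_{\alpha_n})}(r),
\]
where $C_n'(r'')=(\mathcal C_{f_{\alpha_n}}\cup\mathcal C_{f_{\alpha_n}}^{\#})\cap\Delta'_{f_{\alpha_n}}(r'')$, and this follows from Lemma~\ref{p3.2}/Corollary~\ref{c3.1} for $h=f_{\alpha_n}$ together with the continuity of horn maps (so that the radii correspondence for $f_{\alpha_n}$ is close to that for $f_\alpha$, and the slack $r'<r$ absorbs the error). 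The same continuity lets one pass from ${\rm dens}_U C_n'(r'')$ to ${\rm dens}_U\bigl({\rm Exp}\comp\Phi_{f_{\alpha_n}}(C_n'(r''))\bigr)$. To organize this, the paper inserts an intermediate condition (a')---the density bound restricted to open $U\subset C(r)$ against $C_n'(r)$---and proves (a)$\Leftrightarrow$(a')$\Leftrightarrow$(b'); your finite-cover argument for (b)$\Rightarrow$(a) is essentially their proof of (a')$\Rightarrow$(a).
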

\begin{proof}
We denote the condition (b) with $m=1$ by ($b'$).
Then it is sufficient for proving the lemma to prove that (a) is equivalent to ($b'$).

Let
$C:=(\mathcal{C}_{f_{\alpha}}\cup\mathcal{C}_{f_{\alpha}}^{\#})\cap\Delta_{f_{\alpha}}$
and
$C_n:=(\mathcal{C}_{f_{\alpha_n}}\cup\mathcal{C}_{f_{\alpha_n}}^{\#})\cap\Delta_{f_{\alpha_n}}$
for all $n\geq1$. For all $0<r<1$, we let
$C(r):=(\mathcal{C}_{f_{\alpha}}\cup\mathcal{C}_{f_{\alpha}}^{\#})\cap\Delta_{f_{\alpha}}(r)$
and
$C_n'(r):=(\mathcal{C}_{f_{\alpha_n}}\cup\mathcal{C}_{f_{\alpha_n}}^{\#})\cap\Delta'_{f_{\alpha_n}}(r)$.
Next, we will prove that ($b'$) is equivalent to the condition ($a'$):
for all $0<r<1$ and any nonempty open set $U\subset C(r)$,
\[\liminf\limits_{n\to+\infty}{\rm dens}_UC_n'(r)\geq c.\]

\vspace{0.1cm}
\noindent($a'$)$\Rightarrow$($b'$): To prove ($b'$), we only need to prove that
for all $0<r'<r<1$ and any nonempty open set $U\subset\Delta_{\mathcal{R}(f_{\alpha})}(r')$,
\[\liminf\limits_{n\to+\infty}{\rm dens}_U\Delta'_{\mathcal{R}(f_{\alpha_n})}(r)\geq c.\]
By Corollary \ref{c3.1} , there exists $0<r''<1$ such that
\[\Delta_{\mathcal{R}(f_{\alpha})}(r')=
{\rm Exp}\comp\Phi_{f_{\alpha}}(C(r''))\cup\{0\}.\]
According to ($a'$), for any nonempty open set $U\subset C(r'')$,
\[\liminf\limits_{n\to+\infty}{\rm dens}_UC_n'(r'')\geq c.\]
Thus by the continuity of horn maps,
for any nonempty open set $U\subset\Delta_{\mathcal{R}(f_{\alpha})}(r')$,
\[\liminf\limits_{n\to+\infty}{\rm dens}_U{\rm Exp}\comp\Phi_{f_{\alpha_n}}(C_n'(r''))\geq c.\]
According to Lemma \ref{p3.2}, Corollary \ref{c3.1} and the continuity of horn maps,
we have that for large enough $n$,
$${\rm Exp}\comp\Phi_{f_{\alpha_n}}(C_n'(r''))\subset\Delta'_{\mathcal{R}(f_{\alpha_n})}(r).$$
Thus for any nonempty open set $U\subset\Delta_{\mathcal{R}(f_{\alpha})}(r')$,
\[\liminf\limits_{n\to+\infty}{\rm dens}_U\Delta'_{\mathcal{R}(f_{\alpha_n})}(r)\geq c.\]

\vspace{0.1cm}
Similarly, we can obtain ($b'$)$\Rightarrow$($a'$). At last, to complete the proof of this lemma,
we only need to prove that ($a'$) is equivalent to ($a$).
It is easy to see that ($a$) implies ($a'$). Next, we prove that ($a'$) implies ($a$).
From ($a'$), one can obtain that for all $0<r<1$ and any nonempty open set
$U_j\subset f_{\alpha}^j(C(r))$($j=0,1,\cdots$),
\begin{equation}
\label{f2}\liminf\limits_{n\to+\infty}{\rm dens}_{U_j}\Delta'_{f_{\alpha_n}}(r)\geq c
\end{equation}
for $j=0,1,\cdots$.
By Corollary \ref{c3.1}, there exists a positive integer $K$ such that
\[\Delta_{f_{\alpha}}(r)=\cup_{j=0}^Kf_{\alpha}^j(C(r)).\]
Thus for all $0<r<1$ and any nonempty open set $U\subset\Delta_{f_{\alpha}}(r)$,
there exist pairwise disjoint sets $E_j$($j=0,1,\cdots,K$) such that
$$E_j\subset f_{\alpha}^j(C(r))(j=0,1,\cdots,K),$$
$$U=\cup_{j=0}^KE_j$$
and
$${\rm area}(E_j\setminus\mathring{E_j})=0(j=0,1,\cdots,K).$$
Then by (\ref{f2})
\begin{equation*}
\liminf\limits_{n\to+\infty}{\rm dens}_U\Delta'_{f_{\alpha_n}}(r)=
\liminf\limits_{n\to+\infty}{\rm dens}_{\cup_{j=0}^K\mathring{E_j}}\Delta'_{f_{\alpha_n}}(r)\geq c.
\end{equation*}

\end{proof}

\begin{proof}[{\bf Proof of Theorem \ref{T20}}]
It is easy to see that to prove Theorem \ref{T20}, we only need to prove that
for all $0<r<1$ and any nonempty open set $U\subset\Delta_{f_{\alpha}}(r)$,
we have
\begin{equation}
\label{bbf1bb}\liminf\limits_{n\to+\infty}{\rm dens}_U(\Delta'_{f_{\alpha_n}}(r))\geq\frac{1}{2},
\end{equation}
where $\Delta_{f_{\alpha}}(r)$ is the r-disk of the Siegel disk of $f_{\alpha}=e^{2\pi i\alpha}h$
centering at the origin and $\Delta'_{f_{\alpha_n}}(r)$ is the Siegel disk of the restriction of
$f_{\alpha_n}=e^{2\pi i\alpha_n}h$ to $\Delta_{f_{\alpha}}(r)$ centering at the origin.

Let $g=h\comp e^{2\pi i\alpha}$ and $g_n=h\comp e^{2\pi i\alpha_n}$.
Then $g, g_n(n\geq1)\in\mathcal{IS}_N$ and it is easy to see that
to prove (\ref{bbf1bb}), we only need to prove that for any $0<r<1$, we have that
for any nonempty open set $U\subset\Delta_g(r)$,
\[\lim\limits_{n\to+\infty}{\rm dens}_U(\Delta'_{g_n}(r))\geq\frac{1}{2},\]
where $\Delta_g(r)$ is the r-disk of the Siegel disk of $g$ centering at 0
and $\Delta'_{g_n}(r)$ is the Siegel disk of the restriction of $g_n$
to $\Delta_g(r)$ centering at 0.
Next let us prove this. Let
\[\beta=[0,N,a_1,a_2,\cdots]\]
and
\[\beta_n=[0,N,a_1,a_2,\cdots,a_n,A_n,t_1,\cdots](n\geq1).\]
Then
$\mathcal{R}(Q_{\beta})\in\mathcal{IS}_{\alpha}$ and
$\mathcal{R}(Q_{\beta_n})\in\mathcal{IS}_{\alpha_n}$($n\geq1$).
Thus there exist $\tilde{\phi}_n\in\mathcal{S}^{qc}$, $n=1,2,\cdots$ such that $\mathcal{R}(Q_{\beta_n})=P\comp(\tilde{\phi}_n)^{-1}\comp e^{2\pi i\alpha_n}$, $n=1,2,\cdots$.
Since $\mathbb{HT}_N\ni\beta_n\to\beta\in\mathbb{HT}_N$ as $n\to+\infty$, by the continuity of horn maps,
for all $m\geq1$, the renormalization $\mathcal{R}^m(Q_{\beta_n})$ converges uniformly to $\mathcal{R}^m(Q_{\beta})$ on $V_{\mathcal{R}^m(Q_{\beta})}$.
According to [Theorem 6.3 ,\cite{IS}] and
the property of the renormalization $\mathcal{R}$, we have that $\{R(\tilde{\phi}_n)\}_{n=1}^{+\infty}$ is bounded in ${\rm Teich}(W)$ and hence there exists $M>0$ such that
\[d_T(\mathcal{R}(Q_{\beta_n}),g_n)<M\]
for all $n\geq1$.
By Theorem C, we have
\[d_T(\mathcal{R}^{m+1}(Q_{\beta_n}),\mathcal{R}^m(g_n))<\lambda^mM\]
for all $n\geq1$ and $m\geq0$. Fix $m\geq0$, by Lemma \ref{L4.3}, for all $n$, there exists a quasiconformal map $\phi_n$ from $\hat{\mathbb{C}}$ onto itself with dilatation not exceeding $e^{\lambda^mM}$ such that
\[\phi_n(\mathcal{O}_{\mathcal{R}^{m+1}(Q_{\beta_n})})=\mathcal{O}_{\mathcal{R}^m(g_n)}\ {\rm and}\
\phi_n(\Delta_{\mathcal{R}^{m+1}(Q_{\beta_n})})=\Delta_{\mathcal{R}^m(g_n)},\]
and
\[\phi_n\comp\mathcal{R}^{m+1}(Q_{\beta_n})(z)=\mathcal{R}^m(g_n)\comp\phi_n(z)\
{\rm for}\ z\in\mathcal{O}_{\mathcal{R}^{m+1}(Q_{\beta_n})}\cup\Delta_{\mathcal{R}^{m+1}(Q_{\beta_n})}.\]
Since $\mathcal{R}^m(g_n)$(or $\mathcal{R}^{m+1}(Q_{\beta_n})$) converges uniformly to
$\mathcal{R}^m(g)$(or $\mathcal{R}^{m+1}(Q_{\beta})$) on
$V_{\mathcal{R}^m(g)}$(or $V_{\mathcal{R}^{m+1}(Q_{\beta})}$) as $n\to+\infty$,
the critical orbit of $\mathcal{R}^m(g_n)$(or $\mathcal{R}^{m+1}(Q_{\beta_n})$) converges pointwise to that of
$\mathcal{R}^m(g)$(or $\mathcal{R}^{m+1}(Q_{\beta})$) as $n\to+\infty$. Thus
$\{\phi_n\}_{n=1}^{\infty}$ is precompact for the topology of uniform convergence and every limit $\phi$ is
a quasiconformal map from $\hat{\mathbb{C}}$ onto itself with dilatation not exceeding $e^{\lambda^mM}$ and
conjugates $\mathcal{R}^{m+1}(Q_{\beta})$ on
$\mathcal{O}_{\mathcal{R}^{m+1}(Q_{\beta})}\cup\{0\}$
to $\mathcal{R}^m(g)$ on $\mathcal{O}_{\mathcal{R}^m(g)}\cup\{0\}$. By Lemma \ref{L3.2}, we can obtain
\begin{equation}
\label{F4.4}\phi(\Delta_{\mathcal{R}^{m+1}(Q_{\beta})})=\Delta_{\mathcal{R}^m(g)}.
\end{equation}
Without loss of generality, we suppose that $\{\phi_n\}_{n=1}^{\infty}$ converges uniformly to $\phi$.
By Lemma \ref{T1.1s} and Lemma \ref{L4.2}, we have that for any $0<r<1$ and
any nonempty open set $U\subset\Delta_{\mathcal{R}^{m+1}(Q_{\beta})}(r)$,
\begin{equation}
\label{F4.2}\liminf\limits_{n\to+\infty}{\rm dens_{U}(\Delta'_{\mathcal{R}^{m+1}(Q_{\beta_n})}(r))}\geq\frac{1}{2},
\end{equation}
where $\Delta'_{\mathcal{R}^{m+1}(Q_{\beta_n})}(r)$ is the Siegel disk of the restriction of $\mathcal{R}^{m+1}(Q_{\beta_n})$ to $\Delta_{\mathcal{R}^{m+1}(Q_{\beta})}(r)$ centering at the origin.
Then the accumulated set of the sequence of Siegel disks of $\mathcal{R}^{m+1}(Q_{\beta_n})$($n=1,2,\cdots$)
centering at 0 contains the Siegel disk of $\mathcal{R}^{m+1}(Q_{\beta})$ centering at 0.
Thus combining (\ref{F4.4}) and the fact that $\phi_n$ conjugates $\mathcal{R}^{m+1}(Q_{\beta_n})$
on the Siegel disk of $\mathcal{R}^{m+1}(Q_{\beta_n})$ centering at 0 to $\mathcal{R}^m(g_n)$ on the Siegel disk of $\mathcal{R}^m(g_n)$
centering at 0, we can obtain that the limit $\phi$ conjugates
the Siegel disk of $\mathcal{R}^{m+1}(Q_{\beta})$ centering at 0 to the Siegel disk of
$\mathcal{R}^{m+1}(g)$ centering at 0, that is,
\[\phi\comp\mathcal{R}^{m+1}(Q_{\beta})(z)=\mathcal{R}^m(g)\comp\phi(z)\]
for all $z$ belonging to the Siegel disk of $\mathcal{R}^{m+1}(Q_{\beta})$ centering at the origin.

We claim that for all $0<r<1$ and any nonempty open set $U\subset\Delta_{\mathcal{R}^m(g)}(r)$,
$$\liminf\limits_{n\to+\infty}
{\rm dens_{U}(\phi_n(\Delta'_{\mathcal{R}^{m+1}(Q_{\beta_n})}(r_{\phi^{-1}})))}\geq\frac{c(K_m)}{2^{K_m}},$$
where the real number $0<r_{\phi^{-1}}<1$ satisfies
\[\phi(\Delta_{\mathcal{R}^{m+1}(Q_{\beta})}(r_{\phi^{-1}}))=\Delta_{\mathcal{R}^m(g)}(r),\]
$K_m=e^{2\lambda^mM}$ and the number $c(K_m)$ only depends on $K_m$ with $c(K_m)\to1$ as $K_m\to1$.
Indeed, since $\phi_n$ converges uniformly to $\phi$, the composition $\phi^{-1}\comp\phi_n$ converges uniformly to the identity.
We first prove that for all $0<r<1$ and any nonempty open set $U\subset\Delta_{\mathcal{R}^{m+1}(Q_{\beta})}(r)$,
$$\liminf\limits_{n\to+\infty}
{\rm dens}_U(\phi^{-1}\comp\phi_n(\Delta'_{\mathcal{R}^{m+1}(Q_{\beta_n})}(r)))\geq\frac{c(K_m)}{2^{K_m}}.$$
We choose a sequence of pairwise disjoint open balls $\{B_j\}_{j=1}^{+\infty}$ such that
\[\cup_{j=1}^{+\infty}B_j\subset U\ {\rm and}\ {\rm area}(U\setminus\cup_{j=1}^{+\infty}B_j)=0.\]
We will prove that for any $j\geq1$,
\begin{equation}
\label{F4.1}\liminf\limits_{n\to+\infty}
{\rm dens}_{B_j}(\phi^{-1}\comp\phi_n(\Delta'_{\mathcal{R}^{m+1}(Q_{\beta_n})}(r)))\geq\frac{c(K_m)}{2^{K_m}}.
\end{equation}
Let $A_j$ be an affine such that $A_j(B_j)=\mathbb{D}$. Then (\ref{F4.1}) is equivalent to
\begin{equation*}
\liminf\limits_{n\to+\infty}
{\rm dens}_{D}(A_j\comp\phi^{-1}\comp\phi_n\comp A_j^{-1}(A_j(\Delta'_{\mathcal{R}^{m+1}(Q_{\beta_n})}(r))))
\geq\frac{c(K_m)}{2^{K_m}}.
\end{equation*}
By (\ref{F4.2}), it is easy to see that
\begin{equation}
\label{F4.3}\liminf\limits_{n\to+\infty}
{\rm dens}_{D}(A_j(\Delta'_{\mathcal{R}^{m+1}(Q_{\beta_n})}(r)))\geq\frac{1}{2}.
\end{equation}
By Riemann Mapping Theorem, there exists a Riemann mapping
\[\psi_j:A_j\comp\phi^{-1}\comp\phi_n\comp A_j^{-1}(\mathbb{D})\to\mathbb{D}\]
with $\psi_j(A_j\comp\phi^{-1}\comp\phi_n\comp A_j^{-1}(0))=0$ and the derivative
$\psi_j'(A_j\comp\phi^{-1}\comp\phi_n\comp A_j^{-1}(0))>0$. Since $A_j\comp\phi^{-1}\comp\phi_n\comp A_j^{-1}$ converges uniformly to the identity on Riemann sphere with respect to the spherical metric, the domain
$A_j\comp\phi^{-1}\comp\phi_n\comp A_j^{-1}(\mathbb{D})$ converges to $\mathbb{D}$ in the sense of
Carath\'eodory. Thus by Carath\'eodory Theorem we have $\psi_j^{-1}$ converges uniformly to the identity on every compact subset of $\mathbb{D}$. Let
\[\Gamma_j=\psi_j\comp A_j\comp\phi^{-1}\comp\phi_n\comp A_j^{-1}.\]
Then $\Gamma_j$ maps $\mathbb{D}$ onto $\mathbb{D}$ with $\Gamma_j(0)=0$ and dilatation not exceeding $K_m$. According to [Theorem 1.1, \cite{AS}] and (\ref{F4.3}), we have that
$$\liminf\limits_{n\to+\infty}
{\rm dens}_{D}(\Gamma_j(A_j(\Delta'_{\mathcal{R}^{m+1}(Q_{\beta_n})}(r))\cap\mathbb{D}))\geq\frac{c(K_m)}{2^{K_m}}.$$
Then since $\psi_j^{-1}$ converges uniformly to the identity on every compact subset of $\mathbb{D}$,
one can further obtain
$$\liminf\limits_{n\to+\infty}
{\rm dens}_{D}(\psi_j^{-1}\comp \Gamma_j(A_j(\Delta'_{\mathcal{R}^{m+1}(Q_{\beta_n})}(r))\cap\mathbb{D}))\geq\frac{c(K_m)}{2^{K_m}},$$
that is,
$$\liminf\limits_{n\to+\infty}
{\rm dens}_{D}(A_j\comp\phi^{-1}\comp\phi_n\comp
A_j^{-1}(A_j(\Delta'_{\mathcal{R}^{m+1}(Q_{\beta_n})}(r))))\geq\frac{c(K_m)}{2^{K_m}}.$$
Thus (\ref{F4.1}) holds. It follows easily from (\ref{F4.1}) that
\[\liminf\limits_{n\to+\infty}
{\rm dens}_{\cup_{j=1}^{+\infty}B_j}(\phi^{-1}\comp\phi_n(\Delta'_{\mathcal{R}^{m+1}(Q_{\beta_n})}(r)))
\geq\frac{c(K_m)}{2^{K_m}}\]
and hence
\[\liminf\limits_{n\to+\infty}
{\rm dens}_{U}(\phi^{-1}\comp\phi_n(\Delta'_{\mathcal{R}^{m+1}(Q_{\beta_n})}(r)))
\geq\frac{c(K_m)}{2^{K_m}}.\]
According to Lemma \ref{L4.1}, we have that for all $0<r<1$ and any nonempty open set
$U\subset\phi(\Delta_{\mathcal{R}^{m+1}(Q_{\beta})}(r))$,
\begin{equation}
\label{c1}\liminf\limits_{n\to+\infty}
{\rm dens}_{U}(\phi_n(\Delta'_{\mathcal{R}^{m+1}(Q_{\beta_n})}(r)))
\geq\frac{c(K_m)}{2^{K_m}}.
\end{equation}
Taking $r=r_{\phi^{-1}}$ in (\ref{c1}), we obtain that for all $0<r<1$ and
any nonempty open set $U\subset\Delta_{\mathcal{R}^m(g)}(r)$,
\[\liminf\limits_{n\to+\infty}
{\rm dens}_{U}(\phi_n(\Delta'_{\mathcal{R}^{m+1}(Q_{\beta_n})}(r_{\phi^{-1}})))
\geq\frac{c(K_m)}{2^{K_m}}.\]
Thus we complete the proof of the claim.

For all $0<r<r'<1$ and any nonempty open set $U\subset\Delta_{\mathcal{R}^m(g)}(r)$,
since $\phi_n$ converges uniformly to $\phi$ and $\phi_n$ conjugates $\mathcal{R}^{m+1}(Q_{\beta_n})$ on $\Delta_{\mathcal{R}^{m+1}(Q_{\beta_n})}$ to $\mathcal{R}^m(g_n)$ on $\Delta_{\mathcal{R}^m(g_n)}$,
we have that for large enough $n$,
\[\phi_n(\Delta'_{\mathcal{R}^{m+1}(Q_{\beta_n})}(r_{\phi^{-1}}))\subset
\Delta'_{\mathcal{R}^m(g_n)}(r').\]
Then by the above claim, we have
$$\liminf\limits_{n\to+\infty}
{\rm dens_{U}(\Delta'_{\mathcal{R}^m(g_n)}(r'))}\geq\liminf\limits_{n\to+\infty}
{\rm dens}_{U}(\phi_n(\Delta'_{\mathcal{R}^{m+1}(Q_{\beta_n})}(r_{\phi^{-1}})))
\geq\frac{c(K_m)}{2^{K_m}}.$$
This will imply that for all $0<r<1$ and any nonempty open set
$U\subset\Delta_{\mathcal{R}^m(g)}(r)$,
$$\liminf\limits_{n\to+\infty}
{\rm dens_{U}\Delta'_{\mathcal{R}^m(g_n)}(r)}\geq\frac{c(K_m)}{2^{K_m}}.$$
Together with Lemma \ref{L4.2}, one can get that for any $0<r<1$ and any nonempty open set
$U\subset\Delta_{g}(r)$,
$$\liminf\limits_{n\to+\infty}
{\rm dens_{U}(\Delta'_{g_n}(r))}\geq\frac{c(K_m)}{2^{K_m}}.$$
Let $m\to+\infty$, the proof of the theorem is finished.
\end{proof}

\section{The proof of Theorem \ref{T239}}
Let us first recall three main steps of Buff and Ch\'eritat to construct quadratic Cremer Julia sets with positive area.

\vspace{0.1cm}
\noindent (a) Theorem B

\vspace{0.1cm}
\noindent (b) Let $\alpha_n$ ($n\geq1$) and $\alpha$ be the same as those in
Theorem B. If $\alpha_n\in\mathbb{HT}_N$ ($n\geq1$) and $\alpha\in\mathbb{HT}_N$,
then for all $\delta>0$, if $n$ is large enough, then the postcritical set $\mathcal{O}_{P_{\alpha_n}}$ of $P_{\alpha_n}$ is contained in the $\delta$-neighborhood of the Siegel disk $\Delta_{P_{\alpha}}$ of $P_{\alpha}$.

\vspace{0.1cm}
\noindent (c) Assume that $\alpha$ is a bounded type irrational number and $\delta>0$.
We denote by $K(\delta)$ the set of points whose orbit under iteration of $P_{\alpha}$
remains at distance less than $\delta$ from $\Delta_{P_{\alpha}}$.
Then every point in the boundary of $\Delta_{P_{\alpha}}$ is a Lebesgue density point of $K(\delta)$
and Julia set $J(P_{\alpha})$ has zero area.

\vspace{0.1cm}
If one wants to construct a polynomial having a Cremer Julia set with positive area in $\mathcal{F}$ based on
the method of Buff and Ch\'eritat, then he has to establish the above three results for $\mathcal{F}$.
In fact, Theorem \ref{T20} is a key step in this paper which is indeed (a) for $\mathcal{F}$; it follows from
the proof of Buff and Ch\'eritat[p.698-p.724,\cite{BC}] and G. Zhang's result[Main Theorem,\cite{ZGF1}] that (b) can be established for $\mathcal{F}$; (c) is McMullen's result\upcite{McM98}, that is established for quadratic polynomials. However, based on the proof of McMullen and characteristics of $\mathcal{F}$, we can establish it for $\mathcal{F}$ (see Appendix B). For all $\theta\in\mathbb{R}/\mathbb{Z}$, we let $K_{\theta}$ denote the filled Julia set of $f_{\theta}(z)=e^{2\pi i\theta}z(z+1)^2$. Then based on these results, we apply the method of Buff and Ch\'eritat [p.724-p.732,\cite{BC}] to $\mathcal{F}$ and can obtain the following proposition.

\begin{proposition}
	\label{P001}Let $\alpha:=[a_0,a_1,a_2,\cdots]\in\mathbb{HT}_N$ and $p_n/q_n$ be the approximants to $\alpha$.
For all $n\geq1$, we set
	\[\alpha_n:=[a_0,a_1,\cdots,a_n,A_n,N,N,\cdots],\]
where $(A_n)$ is a sequence of positive integers such that
	\begin{equation*}
	\sqrt[q_n]{ A_n}\xrightarrow[n\to+\infty]{}+\infty
	\ {\rm and}\
	\sqrt[q_n]{\log A_n}\xrightarrow[n\to+\infty]{}1.
	\end{equation*}
Then for all $\epsilon>0$, if $n$ is large enough,
	\[{\rm area}(K_{\alpha_n})\geq(1-\epsilon)\cdot{\rm area}(K_{\alpha}).\]
\end{proposition}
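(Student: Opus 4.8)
The plan is to transfer to the cubic family $\mathcal{F}$ the final assembly of Buff and Ch\'eritat [p.724--p.732,\cite{BC}], feeding in the three ingredients now available on $\mathcal{F}$: the density estimate Theorem~\ref{T20} together with its $r$-disk version (this is (a)), the postcritical-set control (b), and the McMullen density-point statement (c). First note that $q_n\to+\infty$ forces $\alpha_n\to\alpha$, so $f_{\alpha_n}\to f_\alpha$ uniformly on compact subsets of $\mathbb{C}$; that $\alpha$ is of bounded type (the case at hand, $\alpha$ being a previously-chosen member of the family with a tail of $N$'s) and each $\alpha_n$ likewise, so $f_\alpha$ and all $f_{\alpha_n}$ have Siegel disks at $0$; and that by (c), $J(f_\alpha)$ and every $J(f_{\alpha_n})$ have zero area, whence ${\rm area}(K_{\alpha})={\rm area}({\rm int}\,K_{\alpha})$ and likewise for each $K_{\alpha_n}$. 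Since the Fatou set of $f_\alpha$ consists of $\Delta_{f_\alpha}$ and its iterated preimages --- the only Fatou cycle being the Siegel disk, as follows from the Inou--Shishikura description of $\mathcal{F}$ (the critical point $-1/3$ is recurrent on $\partial\Delta_{f_\alpha}$, and $-1\mapsto 0$) --- we have ${\rm int}\,K_\alpha=\bigcup_{k\ge0}f_\alpha^{-k}(\Delta_{f_\alpha})$, a disjoint union of Jordan domains of finite total area. Given $\epsilon>0$, I would fix $L$ so large that the finite union $\mathcal{U}$ of those components contained in $\bigcup_{0\le k\le L}f_\alpha^{-k}(\Delta_{f_\alpha})$ satisfies ${\rm area}(\mathcal{U})\ge(1-\epsilon/4)\,{\rm area}(K_\alpha)$.

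\textbf{Reduction to a density estimate.} Next I would reduce the Proposition to the single statement that $\liminf_{n\to+\infty}{\rm dens}_U(K_{\alpha_n})\ge1-\epsilon/4$ for every nonempty open $U\subset\Delta_{f_\alpha}$. Indeed, each component $V$ of $\mathcal{U}$ is an iterated preimage of $\Delta_{f_\alpha}$ of some level $k\le L$; for $n$ large, the matching preimage component of $f_{\alpha_n}$ is close to $V$ and the relevant inverse branch of $f_{\alpha_n}^{k}$ is close, with uniformly bounded distortion, to that of $f_\alpha^{k}$, so the quasiconformal change-of-density Lemma~\ref{L4.1} (or a direct Koebe distortion argument) propagates the estimate from $\Delta_{f_\alpha}$ to $\liminf_n{\rm dens}_V(K_{\alpha_n})\ge1-\epsilon/4$. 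Summing over the finitely many $V$ and using superadditivity of $\liminf$ over finite sums yields $\liminf_n{\rm area}(K_{\alpha_n})\ge(1-\epsilon/4)\,{\rm area}(\mathcal{U})\ge(1-\epsilon/2)\,{\rm area}(K_\alpha)$, which is the claimed inequality after renaming $\epsilon$.

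\textbf{The density estimate on $\Delta_{f_\alpha}$.} This is the heart of the proof. Fix a small $\delta>0$, write $N_\delta(X)$ for the $\delta$-neighbourhood of $X$, and set $K(\delta):=\{z:f_\alpha^k(z)\in N_\delta(\Delta_{f_\alpha})\text{ for all }k\ge0\}$. For $n$ large, (b) gives $\mathcal{O}_{f_{\alpha_n}}\subset N_\delta(\Delta_{f_\alpha})$, so $K_{\alpha_n}$ is connected and contains the trapped set $T_n(\delta):=\{z:f_{\alpha_n}^k(z)\in N_\delta(\overline{\Delta_{f_\alpha}})\text{ for all }k\ge0\}$. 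I would combine two inputs. On one hand, $\alpha_n$ is of bounded type, so ${\rm int}\,K_{\alpha_n}$ equals $\bigcup_k f_{\alpha_n}^{-k}(\Delta_{f_{\alpha_n}})$ up to a null set, and $K_{\alpha_n}$ contains the Siegel disk $\Delta'_{f_{\alpha_n}}$ of $f_{\alpha_n}|_{\Delta_{f_\alpha}}$; reading Theorem~\ref{T20} in its $r$-disk form already gives $\liminf_n{\rm dens}_U(K_{\alpha_n})\ge\tfrac12$ on every open $U\subset\Delta_{f_\alpha}$. On the other hand --- and this is the part that upgrades $\tfrac12$ to $1-\epsilon/4$ --- the portion of $\Delta_{f_\alpha}$ missed by the restricted Siegel disk is recovered through $T_n(\delta)$: by (c) every point of $\partial\Delta_{f_\alpha}$ is a Lebesgue density point of $K(\delta)$, and the postcritical trapping (b) makes $N_\delta(\overline{\Delta_{f_\alpha}})$ almost forward-invariant for $f_{\alpha_n}$ on time scales growing with $n$; carrying this through the Inou--Shishikura near-parabolic renormalization as in the proof of Theorem~\ref{T20} --- transporting the estimate across renormalization levels by Corollary~\ref{c3.1} and Lemma~\ref{L4.2}, where the quasiconformal distortion constants $e^{2\lambda^m M}$ tend to $1$ --- I would show that $T_n(\delta)$ fills, in measure, the part of $\Delta_{f_\alpha}$ near $\partial\Delta_{f_\alpha}$. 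Letting first $n\to+\infty$, then the renormalization level $m\to+\infty$, then $r\to1$, and finally $\delta\to0$ (so $K(\delta)$ shrinks to $\overline{\Delta_{f_\alpha}}$) then yields the density estimate, and the reductions above complete the proof.

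\textbf{Main obstacle.} The hard part will be precisely this last point: passing from the Siegel-disk density $\tfrac12$ supplied by Theorem~\ref{T20} to the filled-Julia-set density $1-\epsilon$, equivalently showing that the set of points of $\Delta_{f_\alpha}$ escaping under $f_{\alpha_n}$ has measure tending to $0$. For quadratic polynomials Buff and Ch\'eritat extract this from a polynomial-like restriction around the Siegel disk; for the cubics in $\mathcal{F}$ no such restriction exists --- this is exactly Proposition~\ref{p239} --- so the entire argument has to be routed through the Inou--Shishikura renormalization, which is the reason each of (a), (b), (c) had to be re-established on $\mathcal{F}$ before this assembly could be carried out.
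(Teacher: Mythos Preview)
Your overall plan coincides with the paper's: establish ingredients (a), (b), (c) on $\mathcal{F}$ and then run the Buff--Ch\'eritat final assembly [p.724--732,\cite{BC}] unchanged. The paper gives no further detail beyond that citation, so your sketch is in fact more explicit than the paper's own ``proof''. Your reduction---pass to finitely many preimage components of $\Delta_{f_\alpha}$ capturing almost all of ${\rm area}(K_\alpha)$, then pull back a density estimate on $\Delta_{f_\alpha}$ along conformal branches of $f_{\alpha_n}^k\approx f_\alpha^k$---is the correct first move, and noting that Theorem~\ref{T20} alone yields only $\liminf_n{\rm dens}_U(K_{\alpha_n})\ge\tfrac12$ (via $\Delta'_{f_{\alpha_n}}\subset K_{\alpha_n}$) is also right.

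Where your sketch goes astray is the mechanism for the upgrade from $\tfrac12$ to $1-\epsilon$. You propose to ``carry this through the Inou--Shishikura near-parabolic renormalization as in the proof of Theorem~\ref{T20}'', invoking Corollary~\ref{c3.1}, Lemma~\ref{L4.2}, and the contraction $e^{2\lambda^m M}\to1$. That is not how the assembly works: near-parabolic renormalization is used \emph{once}, to manufacture ingredient~(a); the argument on [p.724--732,\cite{BC}] then treats (a), (b), (c) as black boxes and is a direct distortion/density argument in the dynamical plane, with no further renormalization. Your side remark that in the quadratic case Buff--Ch\'eritat ``extract this from a polynomial-like restriction around the Siegel disk'' is also inaccurate---no polynomial-like map enters that part of \cite{BC}---so Proposition~\ref{p239} is not an obstruction to repeating their assembly verbatim on $\mathcal{F}$. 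Non-renormalizability matters for the \emph{statement} of Theorem~\ref{T239}, not for the proof of Proposition~\ref{P001}.

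Concretely, the upgrade in \cite{BC} runs as follows, and goes through word-for-word on $\mathcal{F}$ once (a), (b), (c) are available. A point $z\in\Delta_{f_\alpha}\setminus K_{\alpha_n}$ escapes, hence its $f_{\alpha_n}$-orbit leaves $\Delta_{f_\alpha}$; since $f_\alpha(\Delta_{f_\alpha})=\Delta_{f_\alpha}$ and $f_{\alpha_n}\to f_\alpha$, the first exit lands in the thin collar $f_{\alpha_n}(\Delta_{f_\alpha})\setminus\Delta_{f_\alpha}$ about $\partial\Delta_{f_\alpha}$. By (b) one has $\mathcal{O}_{f_{\alpha_n}}\subset N_\delta(\Delta_{f_\alpha})$, so univalent inverse branches of $f_{\alpha_n}$ with uniform Koebe distortion are available on disks about collar points, which lets one compare areas along the first-exit decomposition; by (c) every point of $\partial\Delta_{f_\alpha}$ is a Lebesgue density point of $K(\delta)$, so the escaping mass seen at the collar is $o(1)$ as the collar thins. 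This yields ${\rm area}(\Delta_{f_\alpha}\setminus K_{\alpha_n})\to0$, and your reduction finishes the job. Replace your renormalization paragraph by this argument (or, as the paper does, by a bare citation of [p.724--732,\cite{BC}]) and the proof is complete.
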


Next, we use Proposition \ref{P001} to prove Theorem \ref{T239}.
Let $\theta_0=[0,a_1,a_2,\cdots]\in\mathbb{HT}_N$ be a Brjuno number.
We choose a sequence $\{\epsilon_n\}_{n=1}^{+\infty}$ of positive numbers such that
$0<\epsilon_n<1$ and $\prod_{n=1}^{+\infty}(1-\epsilon_n)$ converges.
For all strictly increasing sequences of positive integers
$\{m_j\}_{j=1}^{+\infty}$
and
$\{A_n\}_{n=1}^{+\infty}$,
we set
$\theta=[0,b_1,b_2,\cdots]$,
where
\[b_j=\left\{\begin{matrix}
a_j,&1\leq j\leq m_1,\\
A_t,&j=m_t+1,\ t\in\{1,2,\cdots\},\\
N,&else
\end{matrix}\right.\]
and for all $l\geq1$,
set $\theta_l=[0,b_1,b_2,\cdots,b_{m_l+1},N,N,\cdots]$.

According to Proposition \ref{P001}, we can choose sequences
$\{m_j\}_{j=1}^{+\infty}$
and
$\{A_n\}_{n=1}^{+\infty}$
so that
$${\rm area}(K_{\theta_l})\geq (1-\epsilon_l)\cdot {\rm area}(K_{\theta_{l-1}})\ (l\geq1)$$
and
$\sqrt[q_{m_j}]{ A_j}\xrightarrow[j\to+\infty]{}+\infty,$
where $q_{m_j}=[0,b_1,b_2,\cdots,b_{m_j}]$ ($j\geq1$).
Thus for all $l\geq1$,
\begin{equation}
\label{bf1}{\rm area}(K_{\theta_l})\geq
\prod_{j=1}^l(1-\epsilon_j)\cdot{\rm area}(K_{\theta_0}).
\end{equation}
Since $\theta_l\to\theta$ as $l\to+\infty$ and upper semi-continuous dependence on filled Julia sets of polynomials\upcite{BC},
we have
$${\rm area}(K_{\theta})\geq
\limsup_{l\to+\infty}{\rm area}(K_{\theta_l}).$$
Thus by (\ref{bf1}), we obtain
$${\rm area}(K_{\theta})\geq
\prod_{j=1}^{+\infty}(1-\epsilon_j)\cdot{\rm area}(K_{\theta_0})>0.$$
Next, we prove that the origin is a Cremer point of $f_{\theta}$.
Let us first see the Brjuno series of $\theta$.
Since
$$\frac{\log(q_{m_j}A_j)}{q_{m_j}}
=\frac{\log(q_{m_j})}{q_{m_j}}
+\log(\sqrt[q_{m_j}]{A_j})
\xrightarrow[j\to+\infty]{}+\infty,$$
the Brjuno series of $\theta$ diverges.
Assume that $f_{\theta}$ has a Siegel disk at the origin.
It follows from holomorphic motions that
every element in $\mathcal{IS}_{\theta}$ has a Siegel disk at the origin.
Let $\alpha=[0,N,b_1,b_2,\cdots]$.
Then by the discussion in Section \ref{S5.2}, we know that
the near-parabolic renormalization $\mathcal{R}(Q_{\alpha})$ of $Q_{\alpha}$
belongs to $\mathcal{IS}_{\theta}$. Thus $\mathcal{R}(Q_{\alpha})$ has a Siegel disk at the origin.
Then by Claim $B$ in the proof of Lemma \ref{p3.2}, we have that
the intersection of $\mathcal{C}_{Q_{\alpha}}\cup\mathcal{C}_{Q_{\alpha}}^{\#}$ and some small neighborhood
of the origin is contained in the filled Julia set $K(Q_{\alpha})$ of $Q_{\alpha}$.
This implies the interior of $K(Q_{\alpha})$ is not empty and hence $Q_{\alpha}$ has a Siegel disk at the origin.
This would contradict Yoccoz's famous result\upcite{Yo}:
for all $\beta\in\mathbb{R}/\mathbb{Z}$, $Q_{\beta}$ has a Siegel disk at the origin if and only if
$\beta$ is a Brjuno number. Thus the origin is a Cremer point of $f_{\theta}$.
To complete the proof, we only need to prove $J(f_{\theta})=K_{\theta}$.
Indeed, if not, then there exists a $q$-periodic Siegel cycle
$z_1,z_2,\cdots,z_q$ of $f_{\theta}$ with $f_{\theta}(z_j)=z_{j+1}$ for $j=1,2,\cdots,q-1$ and $f_{\theta}(z_q)=z_1$. According to Shishikura's method\upcite{S87}, one can obtain that
there exists a quasiconformal map $H$ such that
\begin{itemize}
\item $H(0)=0$, $H(\infty)=\infty$ and $H(z_j)=z_j$ for $j=1,2,\cdots,q$
\item $H$ is analytic in neighborhoods of $0$ and $z_j$ ($j=1,2,\cdots,q$) and derivatives $0<H'(0)<1$ and
$0<H'(z_j)<1$ ($j=1,2,\cdots,q$).
\item There exists an invariant conformal structure under $f_{\theta}\comp H$.
\end{itemize}
Then there exists a quasiconformal map $\phi$ such that $g=\phi\comp f_{\theta}\comp H\comp\phi^{-1}$ is
a cubic polynomial. It is easy to check that $g$ has a geometric attractive fixed point $\phi(0)$
and a geometric attractive cycle $\phi(z_1),\phi(z_2),\cdots,\phi(z_q)$. Thus $g$ has at least
two non-pre-periodic finite critical points. This contradicts the fact that $g$ has only a non-pre-periodic finite critical point $\phi\comp H^{-1}(-\frac{1}{3})$.

\section{Appendix A}
\begin{proof}[Proof of Claim A]
For convenience, the following notations are listed.
\begin{itemize}
\item $\mathcal{C}_h^*:=\{z\in\mathcal{P}_h:1/2
       \leq {\rm Re}(\Phi_h(z))\leq 3/2, {\rm Im}(\Phi_h(z))<-2\}$
\item $\mathcal{D}_h:=\mathcal{C}_h\cup\mathcal{C}_h^{\#}$
\item $\mathcal{D}_h^m:=\{z\in\mathcal{P}_h:1/2
       <{\rm Re}(\Phi_h(z))<1, {\rm Im}(\Phi_h(z))>-2\}$
\item $\mathcal{D}_h^r:=\{z\in\mathcal{P}_h:{\rm Re}(\Phi_h(z))=3/2,
       {\rm Im}(\Phi_h(z))\geq-2\}$
\item $\mathcal{D}_h^l:=\{z\in\mathcal{P}_h:{\rm Re}(\Phi_h(z))=1/2,
       {\rm Im}(\Phi_h(z))\geq-2\}$
\item $\mathcal{D}_h^L:=\{z\in\mathcal{P}_h:3/2<{\rm Re}(\Phi_h(z))<2,
       {\rm Im}(\Phi_h(z))>-2\}$
\item $\mathcal{D}_h^d:=\{z\in\mathcal{P}_h:1/2
       \leq {\rm Re}(\Phi_h(z))\leq 3/2, {\rm Im}(\Phi_h(z))=-2\}$
\item $\mathcal{D}_h^{-k}:=(\mathcal{C}_h)^{-k}\cup(\mathcal{C}_h^{\#})^{-k}$ for $1\leq k\leq{\bf k}_1$
\item $\mathcal{D}_h^k:=h^k(\mathcal{D}_h)$ for $0\leq k\leq k_h$,
      where $k_h$ is the integer closest to $1/\alpha(h)-{\bf k}-3$.
\item $\mathcal{D}:=\cup_{k=-{\bf k}_1}^{k_h}\mathcal{D}_h^k$
(Note that $\mathcal{D}=\cup_{k=-{\bf k}_1+1}^{k_h}\mathcal{D}_h^k$)
\end{itemize}
We first give the following properties.

\vspace{0.1cm}
\noindent{\bf Property 6.1}
\begin{itemize}
\item[(a)] $\mathcal{D}\cup\{0\}$ is closed and compactly contained in ${\rm Def}(h)$;
\item[(b)] $\mathcal{O}_h\subset\mathcal{D}$;
\item[(c)] $\Delta_h$ does't intersect $\mathcal{D}_h^d$;
\item[(d)] For all $z, z'\in\mathcal{D}_h$, if $z'=h(z)$, then $z\in\Delta_h$ if and only if $z'\in\Delta_h$.
\end{itemize}
\begin{proof}
(a): According to Inou-Shishikura, it is obvious that (a) holds.

\vspace{0.1cm}
\noindent(b): Since the critical point of $\mathcal{R}(h)$ can be iterated infinitely many times,
it is easy to see that the critical orbit of $h$ is contained in $\mathcal{D}$.
Thus together with (a), we obtain (b).

\vspace{0.1cm}
\noindent(c): First, we prove
\begin{equation}
\label{F6.1}\mathcal{C}_h^*\cap\mathcal{D}_h^{-k}=\emptyset
\end{equation}
for $1\leq k\leq{\bf k}_1-1$.
Indeed, if there exists $1\leq k_0\leq{\bf k}_1-1$ such that
$\mathcal{C}_h^*\cap\mathcal{D}_h^{-k_0}\not=\emptyset$, then
$$h^{k_0}(\mathcal{C}_h^*)\cap\mathcal{D}_h=
h^{k_0}(\mathcal{C}_h^*)\cap h^{k_0}(\mathcal{D}_h^{-k_0})\not=\emptyset$$
and this is impossible. By (\ref{F6.1}), we see easily that $\mathcal{C}_h^*\cap\mathcal{D}=\emptyset$.
Thus the property (b) tells us that $\mathcal{C}_h^*$ does't intersect $\mathcal{O}_h$.
By Lemma \ref{L3.2}, we have $\mathcal{C}_h^*\cap\partial\Delta_h=\emptyset$.
Next we prove (c). If (c) does't hold, then $\Delta_h$ intersects $\mathcal{D}_h^d$
and hence intersects $\mathcal{C}_h^*$. It follows from $\mathcal{C}_h^*\cap\partial\Delta_h=\emptyset$ that
$\mathcal{C}_h^*\subset\Delta_h$. This implies the nonzero fixed point $\sigma(h)\in\overline{\Delta}_h$.
It is easy to see that $\sigma(h)\not\in\mathcal{D}$.
Thus $\sigma(h)\not\in\mathcal{O}_h$. Then
we have $\sigma(h)\in\Delta_h$ and this is impossible.

\vspace{0.1cm}
\noindent(d): The necessity is obvious. Let us prove the sufficiency.
Since $h\in\mathcal{IS}_{\alpha}$ or $h=Q_{\alpha}$,
$h^{-1}(\mathcal{D}_h^L)$ has a component $\mathcal{D}_h^m$ contained in $\mathcal{D}_h$ and
other components with their boundaries intersecting
$\partial(V_h)$ or $-\frac{16}{27}e^{-2\pi i\alpha}$.

We will prove that any component $L$ of $h^{-1}(\mathcal{D}_h^L)$, not $\mathcal{D}_h^m$, doesn't
intersect $\mathcal{D}$. In fact, suppose that $L\cap\mathcal{D}\not=\emptyset$, that is,
$L\cap\mathcal{D}_h^k\not=\emptyset$ for some $-{\bf k}_1+1\leq k\leq k_h$. Since
$$h(L)\cap h(\mathcal{D}_h\setminus{\mathcal{D}_h^m})=
\mathcal{D}_h^L\cap(\mathcal{D}_h^1\setminus{\mathcal{D}_h^L})=\emptyset,$$
$$h(L)\cap h(\mathcal{D}_h^k)=\mathcal{D}_h^L\cap\mathcal{D}_h^{k+1}=\emptyset$$
for $1\leq k\leq k_h$ and
$$h^{-k}(L)\cap h^{-k}(\mathcal{D}_h^k)=h^{-k}(L)\cap\mathcal{D}_h=\emptyset$$
for $-{\bf k}_1+1\leq k\leq -1$, we have that $L\cap(\mathcal{D}\setminus\mathcal{D}_h^m)=\emptyset$.
Thus $L\cap\mathcal{D}_h^m\not=\emptyset$ and this is impossible.

Assume $z, z'\in\mathcal{D}_h$ with $z'=h(z)$ and $z'\in\Delta_h$. Then to prove the sufficiency,
we only need to prove $z\in\Delta_h$.
If $z\not\in\Delta_h$, we first observe that $z'\in\mathcal{D}_h^r$ and $z\in\mathcal{D}_h^l$,
then there exists a component $L$ of $h^{-1}(\mathcal{D}_h^L)$, that isn't $\mathcal{D}_h^m$,
such that $L\cap\Delta_h\not=\emptyset$.
According to (b) and Lemma \ref{L3.2}, we have $\partial\Delta_h\subset\mathcal{D}$.
Together with the above result, one can get that
$L$ is contained in $\Delta_h$ and hence the closure of $\Delta_h$ intersects
$\partial(V_h)$ or $-\frac{16}{27}e^{-2\pi i\alpha}$.
If $h\in\mathcal{IS}_{\alpha}$, this contradicts Lemma \ref{L3.2}, while if $h=Q_{\alpha}$,
this contradicts the fact that $-\frac{16}{27}e^{-2\pi i\alpha}$ is a preimage of 0 under $h$.
\end{proof}

It follows from (c) and (d) that $\pi_h(\mathcal{D}_h\cap\Delta_h)$ is an open set,
not intersecting $\pi_h(\mathcal{D}_h^d)$,
and
\begin{equation*}
\label{f3}\partial(\pi_h(\mathcal{D}_h\cap\Delta_h))=\pi_h(\mathcal{D}_h\cap\partial\Delta_h).
\end{equation*}
It is obvious that $\pi_h(\mathcal{D}_h\cap\Delta_h)$ contains a neighborhood of the `0' end.
Next we only need to prove that $\pi_h(\mathcal{D}_h\cap\Delta_h)$ is connected. If not, we let $E$ be
a component of $\pi_h(\mathcal{D}_h\cap\Delta_h)$ not containing the neighborhood of the `0' end.
Let $\phi_h:\mathbb{D}\to\Delta_h$ be a conformal map with $\phi_h(0)=0$ and
$h(\phi_h(z))=\phi_h(e^{2\pi i\alpha}z)$. We define
$$\rho:E\to\mathbb{R}^+, z\mapsto|\phi_h^{-1}(\pi_h^{-1}(z))|.$$
It is easy to check that the map $\rho$ is well defined, continuous and open and
that $\rho(z)\to1$ as $z\to\partial E$. Then $\rho$ is constant and this is impossible.
\end{proof}

\section{Appendix B\label{B}}
In this appendix, we will briefly explain that some of McMullen's treatments of quadratic polynomials in \cite{McM98} are also applicable to the family $\mathcal{F}$ of cubic polynomials, and then we can get the following two propositions.

\begin{proposition}
\label{B1}Let $\alpha$ be a bounded type irrational number. Then Hausdorff dimension of Julia set of the cubic polynomial $f_{\alpha}$ is strictly less than $2$. As a consequence, the area of Julia set of
$f_{\alpha}$ is zero.
\end{proposition}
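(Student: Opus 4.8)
The plan is to transplant McMullen's proof of the corresponding statement for quadratic polynomials \cite{McM98} to the family $\mathcal{F}$, the point being that $f_{\alpha}$ has only one dynamically relevant critical point. Recall first the structure of the Siegel disk. Since $\alpha$ is of bounded type it is, in particular, a Brjuno number, so $0$ is a Siegel fixed point of $f_{\alpha}$ with Siegel disk $\Delta_{\alpha}$; by G. Zhang's theorem \cite{ZGF1} together with the standard theory of bounded type Siegel disks, $\partial\Delta_{\alpha}$ is a quasicircle and contains the critical point $-1/3$ of $f_{\alpha}$. Write $J=J(f_{\alpha})$ and $K=K_{\alpha}$ for the filled Julia set. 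Both finite critical orbits of $f_{\alpha}$ are bounded and accumulate only on $\overline{\Delta_{\alpha}}$, which rules out attracting and parabolic cycles and Siegel cycles other than $\{0\}$; since a polynomial has no Herman rings and, by Sullivan, no wandering domains, ${\rm int}(K)$ is exactly the grand orbit of $\Delta_{\alpha}$, and $J=\partial K$.

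Next comes the one place where $\mathcal{F}$ genuinely differs from the quadratic family, and where the transfer is clean. The second finite critical point of $f_{\alpha}$ is $-1$, and $f_{\alpha}(-1)=0$, the Siegel centre, which is a fixed point; hence the forward orbit of $-1$ is the finite set $\{-1,0\}$, with $0$ lying in the interior of $\Delta_{\alpha}$. On the other hand $-1/3\in\partial\Delta_{\alpha}$, and $f_{\alpha}$ restricted to the Jordan curve $\partial\Delta_{\alpha}$ is conjugate to the rotation $z\mapsto e^{2\pi i\alpha}z$, so the orbit of $-1/3$ is dense in $\partial\Delta_{\alpha}$. Consequently the postcritical set of $f_{\alpha}$ is
\[
\mathcal{O}_{f_{\alpha}}=\partial\Delta_{\alpha}\cup\{-1,0\},
\]
a quasicircle together with a two-point set. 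This is precisely the postcritical picture of a bounded type quadratic polynomial — the extra finite set being negligible for area, for Hausdorff dimension, and for the expansion estimates used below — so McMullen's arguments apply with only notational changes.

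With this in hand I would carry out McMullen's two analytic steps. First, the self-similarity near the critical point: on a fixed neighbourhood of $-1/3$ the map $f_{\alpha}$ is a quadratic-like branched double cover (the other critical point $-1$ keeping a definite distance from $\overline{\Delta_{\alpha}}$), so McMullen's Siegel-disk renormalization at the critical point, together with the complex a priori bounds available for bounded type rotation numbers, goes through; it yields geometric convergence of the renormalizations and hence asymptotic self-similarity of $\partial\Delta_{\alpha}$ about $-1/3$. From this one deduces, exactly as in \cite{McM98}, the $\mathcal{F}$-analogue of the density statement (c) used in the proof of Theorem \ref{T239}: for every $\delta>0$, every point of $\partial\Delta_{\alpha}$ is a Lebesgue density point of the set $K(\delta)$ of points whose $f_{\alpha}$-orbit stays within distance $\delta$ of $\overline{\Delta_{\alpha}}$. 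Second, one invokes McMullen's argument passing from self-similarity and this density property to $\dim_{H}J<2$; it uses only that $J$ is the closure of the grand orbit $\bigcup_{n}f_{\alpha}^{-n}(\partial\Delta_{\alpha})$, that $f_{\alpha}$ is (eventually) expanding off any fixed neighbourhood of $\mathcal{O}_{f_{\alpha}}$, and the Koebe distortion theorem to transport the density estimate along univalent inverse branches — all equally available for $f_{\alpha}$. Since a planar set of Hausdorff dimension strictly less than $2$ is Lebesgue null, ${\rm area}(J(f_{\alpha}))=0$.

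The hard part will be the first analytic step: verifying that McMullen's renormalization theory and a priori bounds, written for quadratic polynomials, genuinely apply to $f_{\alpha}$. Concretely, one must check (i) that the first-return / renormalization maps near $-1/3$ are quadratic-like with uniformly bounded geometry along the renormalization tower — using that $-1$ and its image $0$ keep a definite distance from $\overline{\Delta_{\alpha}}$ and from the relevant puzzle pieces — and (ii) that the global expansion off $\mathcal{O}_{f_{\alpha}}$ is uniform, i.e. that the second critical point $-1$ introduces no additional recurrence; this last point is automatic, since $-1$ maps after one step to the fixed point $0$. Granting these, the remainder is essentially a line-by-line rereading of McMullen's argument.
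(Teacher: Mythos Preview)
Your overall strategy—transplant McMullen's argument, using that the second critical point $-1$ is prefixed to the Siegel centre and hence contributes nothing recurrent to the postcritical set—is correct and is exactly what the paper does. But your account of \emph{which} McMullen argument yields $\dim_H J<2$ is off. In \cite{McM98} (and in the paper's Appendix~B) this does \emph{not} pass through self-similarity or the density property. The chain is: a key lemma (Lemma~\ref{B3}, the analogue of \cite[Thm.~3.2]{McM98}) says that for every $z\in J(f_\alpha)$ and scale $r>0$ there is a univalent iterate $f_\alpha^i:(U,y)\to(V,c_0)$ with ${\rm in\text{-}radius}(U,y)\asymp r$ and $|y-z|=O(r)$; this yields that $J(f_\alpha)$ is \emph{shallow} exactly as in \cite[Thm.~4.1]{McM98}; and shallowness alone gives $\dim_H<2$. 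The density statement (Proposition~\ref{B2}) is a \emph{separate} consequence of the same key lemma via deep and measurable-deep points; it is not an input to Proposition~\ref{B1}. So your ``second step'' invokes an implication that is not there, and your proposed first step (full renormalization convergence and asymptotic self-similarity) is more than is needed for Proposition~\ref{B1}.

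More importantly, you miss the actual technical obstruction in adapting the key lemma. It is not the a priori bounds or the renormalization tower—those are local at $c_0=-1/3$, where $f_\alpha$ is locally two-to-one, and transfer directly. The issue is that McMullen's proof of \cite[Thm.~3.2]{McM98} (and of \cite[Prop.~3.5]{McM98}) uses the global involution $\iota(z)=2c-z$ of a quadratic polynomial, satisfying $P_\alpha\circ\iota=P_\alpha$; it is used both to pass between $\Delta$ and its companion preimage and to get ``$f|_{B_{j-1}}$ is univalent'' from $B_{j-1}\cap P(f)=\emptyset$. The cubic $f_\alpha$ has no such global symmetry. The paper's fix is to construct a \emph{local} holomorphic involution $\iota_1$ on a neighbourhood of $\overline{D'_\alpha}\cup\overline{\Delta_\alpha}$ with $f_\alpha\circ\iota_1=f_\alpha$ and bounded distortion, and to prove separately that every $z\in J(f_\alpha)\setminus\partial\Delta_\alpha$ admits a ball of radius $\asymp d(z,\partial\Delta_\alpha)$ on which $f_\alpha$ is univalent (here one uses that the extra critical point $-1$ lies in $D'_\alpha$, away from $J$). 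These are the concrete modifications you would need to supply. One small slip: you write that ``$-1$ and its image $0$ keep a definite distance from $\overline{\Delta_\alpha}$'', but $0$ is the centre of $\Delta_\alpha$; only $-1$ stays away.
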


\noindent For all $\alpha\in\mathbb{R}/\mathbb{Z}$,
the polynomial $f_{\alpha}$ has two finite critical points $-1$
and $c_0=-\frac{1}{3}$ with the corresponding critical values $0$ and $c_1=-\frac{4}{27}e^{2\pi i\alpha}$.
If $\alpha$ is a bounded type irrational number, then according to \cite{ZGF1},
we have that the Siegel disk $\Delta_{\alpha}$ of $f_{\alpha}$ at $0$ is a quasi-disk, and
the critical point $c_0$ belongs to the boundary $\partial\Delta_{\alpha}$.
Preimage of Siegel disk $\Delta_{\alpha}$ under $f_{\alpha}$ has two connected components, that is,
$\Delta_{\alpha}$ and the component $D'_{\alpha}$ containing $-1$. It is easy to see
$\overline{D'}_{\alpha}\cap\overline{\Delta}_{\alpha}=\{c_0\}$.
For all $\delta>0$, we set
\[K(\delta)=\{z\in\mathbb{C}|{\rm for\ all}\ n\geq0,\
f_{\alpha}^n(z)\ {\rm belongs\ to\ the}\ \delta-{\rm neighborhood\ of}\ \Delta_{\alpha}\}.\]
Then we have the following proposition.

\begin{proposition}
\label{B2}Assume that $\alpha$ is a bounded type irrational number and $\delta>0$. Then
every point of the boundary of $\Delta_{\alpha}$ is a Lebesgue density point of $K(\delta)$.
\end{proposition}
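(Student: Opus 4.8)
The plan is to carry out, in the setting of the cubic $f_{\alpha}$, the renormalization argument McMullen uses for quadratic polynomials with a bounded-type Siegel disk \cite{McM98}, relying on the structural facts recorded above and on Proposition \ref{B1}. Recall that, since $\alpha$ is of bounded type, $\partial\Delta_{\alpha}$ is a quasicircle through the critical point $c_0=-\tfrac13$, that $f_{\alpha}|_{\partial\Delta_{\alpha}}$ is conjugate to the rigid rotation by $\alpha$, so the forward orbit $\{f_{\alpha}^{n}(c_0):n\ge 0\}$ is dense in $\partial\Delta_{\alpha}$, and that the only other finite critical point $-1$ satisfies $f_{\alpha}(-1)=0$, with $\overline{D'}_{\alpha}\cap\overline{\Delta}_{\alpha}=\{c_0\}$. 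In particular the postcritical orbit $c_1=-\tfrac{4}{27}e^{2\pi i\alpha},c_2,c_3,\dots$ lies on $\partial\Delta_{\alpha}$ and meets neither critical point, so every iterate $f_{\alpha}^{n}$ $(n\ge 1)$ is univalent near $c_1$.

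First I would build the Siegel renormalization of $f_{\alpha}$ about $c_0$. Writing $\alpha=[0,a_1,a_2,\dots]$ with $\sup_j a_j<\infty$ and letting $q_n$ be the denominators of the convergents, one rescales the closest returns $f_{\alpha}^{q_n}$ near $c_0$ by the Siegel linearizing coordinate of $\Delta_{\alpha}$. The a priori bounds available here---the quasicircle property of $\partial\Delta_{\alpha}$ from \cite{ZGF1} together with the complex bounds for the bounded-type Siegel renormalization---show that these rescaled returns form a precompact family of holomorphic maps of bounded degree with uniformly bounded geometry. This is exactly the place where one must invoke ``characteristics of $\mathcal{F}$'': one checks that the finitely many other dynamical features of $f_{\alpha}$ (the basin of $\infty$, the trivial orbit of $-1$, and the preimage component $D'_{\alpha}$, which touches $\overline{\Delta}_{\alpha}$ only at $c_0$) stay uniformly away from the renormalization windows around $\partial\Delta_{\alpha}$, so they do not disturb the estimates and McMullen's analysis applies with ``quadratic-like return'' replaced by the corresponding model for $f_{\alpha}$. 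The output is that $c_0$ (equivalently $c_1$) is a deep point of $K(\delta)$: there are $C,\epsilon>0$ with
\[
{\rm area}\bigl(B(c_0,r)\setminus K(\delta)\bigr)\le C\,r^{2+\epsilon}\qquad(0<r<r_0).
\]
Here Proposition \ref{B1} enters: the genuinely escaping part of $B(c_0,r)$ and the part of $J(f_{\alpha})$ inside it have area $O(r^{2+\epsilon})$ because $\dim_H J(f_{\alpha})<2$ and because the self-similar structure confines the non-$K(\delta)$ pieces at scale $r$ to a bounded number of renormalization windows at the next scale, forcing the relative area of the complement to decay geometrically in the depth of renormalization.

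Next I would propagate the deep-point property from $c_0$ to every point of $\partial\Delta_{\alpha}$. Since $c_0\in\partial\Delta_{\alpha}$ lies in the interior of the $\delta$-neighbourhood of $\Delta_{\alpha}$, for $z$ near $c_0$ one has $z\in K(\delta)\iff f_{\alpha}(z)\in K(\delta)$; as $f_{\alpha}$ is a branched double cover near $c_0$ and the deep-point property is preserved (up to adjusting the exponent) by $w\mapsto w^{1/2}$, the critical value $c_1$ is again a deep point of $K(\delta)$. For $m\ge 1$ the iterate $f_{\alpha}^{m}$ is univalent near $c_1$; the bounded-type hypothesis gives, on the appropriate scales, a distortion bound for $f_{\alpha}^{m}$ uniform in $m$, and near each $c_{m+1}\in\partial\Delta_{\alpha}$ membership in $K(\delta)$ is again equivalent to membership in $f_{\alpha}^{-m}(K(\delta))$; since a univalent map of bounded distortion carries a deep point to a deep point with controlled constants, each $c_{m}$ is a deep point of $K(\delta)$ with constants $C,\epsilon$ independent of $m$. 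An arbitrary $x\in\partial\Delta_{\alpha}$ is a limit of points $c_{m_k}$, and the uniformity of the bounded-geometry estimates over $\partial\Delta_{\alpha}$ (the core renormalization output, re-derived for $\mathcal{F}$) lets the estimate ${\rm area}(B(x,r)\setminus K(\delta))\le C\,r^{2+\epsilon}$ pass to the limit. Thus every $x\in\partial\Delta_{\alpha}$ is a deep point of $K(\delta)$, whence ${\rm area}(B(x,r)\setminus K(\delta))=o(r^{2})$ and $x$ is a Lebesgue density point of $K(\delta)$, which is the assertion of Proposition \ref{B2}.

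The main obstacle is the first step: re-establishing McMullen's a priori bounds and the precompactness of the rescaled return maps for $f_{\alpha}$, and hence the deep-point property at $c_0$ together with its uniform version over $\partial\Delta_{\alpha}$. Concretely one must know that the boundary regularity of the bounded-type Siegel disk (Zhang's theorem \cite{ZGF1}) and the complex bounds for the Siegel renormalization persist in the cubic family, and that all the non-rotational dynamics of $f_{\alpha}$ remains uniformly separated from the renormalization windows. Once this is in place, the transfer of deepness along the dense postcritical orbit and the deduction of the density-point property are routine adaptations of McMullen's argument.
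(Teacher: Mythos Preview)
Your overall plan---adapt McMullen's argument to $f_\alpha$ using Zhang's quasicircle theorem and the bounded-type complex bounds---matches the paper's. The organization differs, and one step in yours is soft.

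The paper keeps McMullen's separation between a \emph{deep point} (a metric notion: any ball $B(y,s)\subset B(z,r)\setminus\Lambda$ has $s=O(r^{1+\delta})$) and a \emph{measurable deep point} (your area bound). It first proves a replication lemma, Lemma~\ref{B3}: for every $z\in J(f_\alpha)$ and every $r>0$ there is a univalent iterate $f_\alpha^i:(U,y)\to(V,c_0)$ with ${\rm in\hbox{-}radius}(U,y)\asymp r$ and $|y-z|=O(r)$. From this $J(f_\alpha)$ is shallow; separately (Lemma~\ref{B6}) every point of $\partial\Delta_\alpha$ is shown to be a \emph{metric} deep point of $K(\delta)$; then Lemma~\ref{B5} (deep point $+$ shallow boundary $\Rightarrow$ measurable deep point) yields the density-point conclusion, following McMullen's Corollary~4.5. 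The second critical point $-1$ forces one additional case in the univalence check inside that last step (when the relevant domain $U$ sits in an iterated preimage of $D'_\alpha$), which the paper disposes of by the monodromy theorem; you allude to $-1$ but do not isolate this case.

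Your route instead goes for the area bound at $c_0$ directly and then pushes it forward to $c_{m+1}$ via $f_\alpha^m$, finally passing to a limit for arbitrary $x\in\partial\Delta_\alpha$. The assertion that ``bounded type gives a distortion bound for $f_\alpha^m$ uniform in $m$'' is the soft spot: the critical points of $f_\alpha^m$ accumulate on $\partial\Delta_\alpha$, so the univalence radius of $f_\alpha^m$ at $c_1$ tends to $0$, and the forward transfer only controls $B(c_{m+1},r)$ for $r$ below an $m$-dependent threshold; your limiting step then begs exactly the uniformity in question. What rescues this is precisely Lemma~\ref{B3}, which supplies a univalent branch with bounded distortion \emph{at every point and every scale} by going backward to $c_0$ rather than forward from it. With that lemma (and the deep/measurable-deep split) in hand your argument closes; as written, the propagation step is a gap.
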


For convenience, we introduce some notations in \cite{McM98}.

\begin{itemize}
\item $A=O(B)$ denotes $A<CB$ for some implicit positive constant $C$.
\item $A\asymp B$ denotes $B/C<A<CB$ for some implicit positive constant $C$.
\item $d(x,y)$ denotes the Euclid distance of $x$ and $y$.
\item For any point disk $(U,u)\subset\mathbb{C}$, we set
\[{\rm in-radius}(U,u)=\sup\{r:B(u,r)\subset U\}.\]
\end{itemize}
Next, we will give a key lemma.
The quadratic polynomial version of this lemma is the main result of Section $3$ in \cite{McM98}.
As McMullen says, the lemma tells us that the geometry of Julia set near the critical point is replicated with bounded distortion everywhere in the Julia set.
We let $\alpha$ be a bounded type irrational number. Then we have

\begin{lemma}
\label{B3}For all $z\in J(f_{\alpha})$ and $r>0$, there exists a univalent map
\[f_{\alpha}^i:(U,y)\to(V,c_0),\ i\geq0,\] such that
${\rm in-radius}(U,y)\asymp r$ and $|y-z|=O(r)$.
\end{lemma}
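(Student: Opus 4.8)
The plan is to transplant McMullen's argument from [Section~3, \cite{McM98}] to the cubic polynomial $f_\alpha$; since Lemma~\ref{B3} is literally the cubic counterpart of the main result of that section, what has to be done is to check that the few places where McMullen uses features special to quadratic polynomials survive the passage to $\mathcal{F}$. The relevant features are as follows. Since $\alpha$ is of bounded type, Zhang's theorem [Main Theorem, \cite{ZGF1}] gives that $\partial\Delta_\alpha$ is a quasicircle containing the critical point $c_0=-1/3$, so the linearizing coordinate $\psi:\mathbb{D}\to\Delta_\alpha$ extends to a quasiconformal homeomorphism of $\widehat{\mathbb{C}}$, whose restriction to $\partial\mathbb{D}$ conjugates the rigid rotation $R_\alpha$ to $f_\alpha|_{\partial\Delta_\alpha}$ and is quasisymmetric. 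The second critical point $-1$ satisfies $f_\alpha(-1)=0$ and lies in the Fatou component $D'_\alpha$ with $\overline{D'}_\alpha\cap\overline{\Delta}_\alpha=\{c_0\}$, so the post-critical set of $f_\alpha$ equals $\partial\Delta_\alpha\cup\{0\}$, exactly as in the quadratic picture (with the extra point $0$ interior to the Siegel disk); the only genuine difference is that $f_\alpha$ has degree $3$, which merely supplies additional inverse branches.

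I would first fix the target chart: let $V$ be a small round disk centred at $c_0$ whose closure is disjoint from $\{0,c_1\}\cup\overline{D'}_\alpha$ and from a fixed neighbourhood of $\infty$, where $c_1=-\frac{4}{27}e^{2\pi i\alpha}$ is the critical value of $c_0$. Since $0$ is fixed and $0\notin V$, no component of any $f_\alpha^{-n}(V)$ contains $0$; hence the only way an inverse orbit of $V$ can fail to produce a univalent pullback is by running into a preimage of $c_1$, and (because each of $c_1,c_0$ has three preimages) this can be steered around. The problem thus reduces to producing, for given $z\in J(f_\alpha)$ and $r>0$, a single inverse orbit of $V$ along which the pullback stays univalent and whose terminal component $U$, centred at a point $y$ with $f_\alpha^i(y)=c_0$, satisfies ${\rm in-radius}(U,y)\asymp r$ and $|y-z|=O(r)$. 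Applying the Koebe distortion theorem to the univalent map $f_\alpha^i:U\to V$ then turns good control of $U$ into the asserted bounds.

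The core of the argument is a dichotomy governed by a fixed neighbourhood $N$ of $\overline{\Delta}_\alpha\cup\overline{D'}_\alpha$. Given $z$ and $r$, follow the forward orbit of a ball $B(z,r')$ with $r'\asymp r$. If this orbit stays in $N$ past a threshold depending on $r$, then $z$ lies in the linearizable region at scale $r$: conjugating by $\psi^{-1}$ one works near $\partial\mathbb{D}$ with $R_\alpha$, and the bounded-type hypothesis makes the orbit $\{R_\alpha^n(\psi^{-1}(c_0))\}$ on $\partial\mathbb{D}$ have bounded geometry, which produces a close-return time realising the required jump from the scale-$r$ neighbourhood of $z$ to a definite neighbourhood of $c_0$ with bounded distortion; transporting this back through $\psi$ and the quasisymmetric conjugacy gives the desired univalent pullback. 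If instead the forward orbit of $B(z,r')$ escapes $N$, then on $J(f_\alpha)\setminus N$ the map $f_\alpha$ is uniformly expanding for the hyperbolic metric of $\widehat{\mathbb{C}}\setminus(\partial\Delta_\alpha\cup\{0\})$ (the post-critical set lies in $\overline{N}$), so a univalent iterate inflates $B(z,r)$ to definite size with bounded distortion; a further bounded number of steps then either lands in a definite neighbourhood of $c_0$ or returns one to the first case at a definite scale, handled by a compactness argument at unit scale.

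The main obstacle is the first case, i.e. the absence of expansion near the Siegel disk: this is precisely where the bounded-type self-similar structure of $J(f_\alpha)$ about $c_0$ must be made quantitative for $f_\alpha$. Concretely one must re-run McMullen's combinatorial and geometric estimates near $c_0$ — in particular, that the first-return maps of $f_\alpha$ to a small neighbourhood of $c_0$ are quadratic-like with bounded geometry — now in the presence of the second critical point $-1$. That critical point is inert for this purpose: it is pre-fixed and sits in the Fatou set, so it contributes nothing to the post-critical dynamics; and although the grand orbit of $\overline{\Delta}_\alpha$ is combinatorially richer for a cubic, only $c_0$ has an infinite post-critical orbit, so the quadratic-like return structure at $c_0$ that drives McMullen's proof is unchanged. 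Zhang's theorem [\cite{ZGF1}] supplies the one external ingredient — the quasiconformal extension of the linearizing coordinate — that in the quadratic case came from the classical work of Douady, Ghys and Herman.
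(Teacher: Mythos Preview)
Your overall strategy matches the paper's: transplant McMullen's Section~3 argument, with Zhang's theorem supplying the quasicircle boundary through $c_0$. But your proposal does not locate the two specific places where the quadratic symmetry $\iota(z)=2c_3-z$ (satisfying $P_\alpha\circ\iota=P_\alpha$) is actually used, and your remark that degree $3$ ``merely supplies additional inverse branches'' misses the real issue, which concerns forward injectivity rather than pullbacks.

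The paper pinpoints two modifications. First, McMullen's Proposition~3.5 uses $\iota$ to identify the two preimage sheets of the Siegel disk near the critical point; the paper constructs a local holomorphic substitute $\iota_1$ on a neighbourhood of $\overline{D'}_\alpha\cup\overline{\Delta}_\alpha$ with $f_\alpha\circ\iota_1=f_\alpha$ and $\iota_1(c_0)=c_0$, and verifies the bounded-distortion comparisons (between $\partial D'_\alpha$ and $\partial\Delta_\alpha$, and for in-radii of images of small disks) that this branch must satisfy. Second, and this is what your outline glosses over, McMullen's Theorem~3.2 needs ``$f|_{B_{j-1}}$ is univalent'' for a Euclidean disk $B_{j-1}$ disjoint from the postcritical set; for a quadratic this is automatic (a disk missing the unique critical point is a domain of injectivity, by $\iota$), but for a cubic it fails in general, since a disk avoiding both critical points need not inject under $f_\alpha$. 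The paper proves the needed replacement: for every $z\in J(f_\alpha)\setminus\partial\Delta_\alpha$ there is a disk $B(z,r)$ with $r\asymp d(z,\partial\Delta_\alpha)$, disjoint from $\partial\Delta_\alpha$, on which $f_\alpha$ is univalent (local quadratic normal form near $c_0$, compactness away from $c_0$). Your dichotomy invokes exactly this step when you write ``a univalent iterate inflates $B(z,r)$ to definite size''; declaring $-1$ ``inert'' because it is pre-fixed correctly handles the postcritical set but does not by itself give forward univalence along the orbit.
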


\begin{proof}
The treatment of McMullen on quadratic polynomials is still applicable to our case.
Thus instead of giving detailed proof here, we will just say what needs attention and modification in McMullen's method
because of changing quadratic polynomials into cubic polynomials.
We first observe that the quadratic polynomial $P_{\alpha}=e^{2\pi i\alpha}z+z^2$ has a nice symmetry, that is,
$P_{\alpha}(z)=P_{\alpha}(2c_3-z)$, where $c_3$ is the unique finite critical point of $P_{\alpha}$.
We write $\iota(z)=2c_3-z$. This symmetry brings convenience to some discussions,
for example, the Siegel disk and anthor component of its preimage are symmetric about the critical point $c_3$;
$P_{\alpha}$ is univalent in any Euclid disk not containing $c_3$.
The cubic polynomial $f_{\alpha}$ has no this symmetry, but because of the similarity near Siegel disks of $f_{\alpha}$ and $P_{\alpha}$,
we can get a similar symmetry with bounded distortion.
The first thing we should pay attention to is that $\iota$ is used in the first half of the proof of
[Proposition 3.5,\cite{McM98}]. We will replace $\iota$ with the following map $\iota_1$.
Let $\iota_1$ be a holomorphic map defined in some neighborhood $U_1$ of
$\overline{D'}_{\alpha}\cup\overline{\Delta}_{\alpha}$ such that
$\iota_1(c_0)=c_0$, $\iota_1(z)\not=z$ ($z\in U_1\setminus\{c_0\}$)
and
$f_{\alpha}(z)=f_{\alpha}(\iota_1(z))$ ($z\in U_1$).
It is easy to prove that the map $\iota_1$ exists, and the following properties hold.

\begin{itemize}
\item There exists $r>0$ such that for all $z\in\partial D'_{\alpha}\cup\partial\Delta_{\alpha}$,
we have $B(z,r)\subset U_1$.
\item For all $0<s<r$ and $z\in\partial D'_{\alpha}\cup\partial\Delta_{\alpha}$,
we have
$${\rm in-radius}(\iota_1(B(z,s)),\iota_1(z))
\asymp s.$$
\item For all $z\in\partial D'_{\alpha}$, we have
$$d(z,\partial\Delta_{\alpha})\asymp
d(\iota_1(z),\partial D'_{\alpha});$$
for all $z\in\partial\Delta_{\alpha}$, we have
$$d(z,\partial D'_{\alpha})\asymp
d(\iota_1(z),\partial\Delta_{\alpha}).$$
\end{itemize}
By the definition and properties of $\iota_1$, it is easy to see that $\iota$ can be replaced by $\iota_1$.

The second thing we should pay attention to is `$f|_{B_{j-1}}$ is univalent' in the proof of [Theorem 3.2,\cite{McM98}].
We observe that `$f|_{B_{j-1}}$ is univalent' needs the symmetry of $f=P_{\alpha}$.
In fact, according to the proof of [Theorem 3.2,\cite{McM98}],
we see easily that the sentences
\[`\epsilon>l(v_{j-1})\asymp\frac{|v_{j-1}|}{d(z_{j-1},P(f))}\]
implies that there exists $B_{j-1}=B(z_{j-1},s)$ such that $B_{j-1}\cap P(f)=\emptyset$ and
$s\asymp|v_{j-1}/\epsilon|$. Then $f|_{B_{j-1}}$ is univalent.'
can be replaced by the sentence
\[`\epsilon>l(v_{j-1})\asymp\frac{|v_{j-1}|}
{d(z_{j-1},P(f_{\alpha}))}\]
implies that there exists $B_{j-1}=B(z_{j-1},s)$ such that
$B_{j-1}\cap P(f_{\alpha})=\emptyset$, $s\asymp|v_{j-1}/\epsilon|$ and
$f_{\alpha}|_{B_{j-1}}$ is univalent.'.
And the latter follows the following claim.

\vspace{0.1cm}
\noindent\textbf{Claim:} For all $z\in J(f_{\alpha})\setminus\partial\Delta_{\alpha}$,
there exists a ball $B(z,r)$ such that $B(z,r)\cap\partial\Delta_{\alpha}=\emptyset$,
$d(z,\partial\Delta_{\alpha})\asymp r$ and $f_{\alpha}$ is univalent in $B(z,r)$.

Indeed, in a sufficiently small neighborhood $B(c_0,s)$ of $c_0$, we have
$$f_{\alpha}(z)=(\phi(z+c_0))^2+c_1\ {\rm for}\ z\in B(c_0,s),$$
where $\phi$ is a univalent map with $\phi(0)=0$.
Thus one obtains easily that for all $z\in B(c_0,s/2)$, there exists a ball $B(z,s_1)\subset B(c_0,s)$
such that $c_0\not\in B(z,s_1)$, $s_1\asymp |z-c_0|$ and $f_{\alpha}$ is univalent in $B(z,s_1)$.
Thus the claim holds for $z\in(J(f_{\alpha})\setminus\partial\Delta_{\alpha})\cap B(c_0,s/2)$.
Since the compact set $J(f_{\alpha})\setminus B(c_0,s/2)$ does't contain any critical point of $f_{\alpha}$,
we have that there exists $r_1>0$ such that for all $z\in J(f_{\alpha})\setminus B(c_0,s/2)$,
$f_{\alpha}$ is univalent in $B(z,r_1)$. It follows from the boundedness of $J(f_{\alpha})\setminus B(c_0,s/2)$
that the claim holds for $z\in (J(f_{\alpha})\setminus\partial\Delta_{\alpha})-B(c_0,s/2)$.
This completes the proof of the claim.
\end{proof}

Based on Lemma \ref{B3}, we can get that $J(f_{\alpha})$ is shallow in the same way as that in
the proof of [Theorem 4.1,\cite{McM98}]. Let us recall a compact set $\Lambda\subset\mathbb{C}$
is shallow if and only if for all $z\in\Lambda$ and $0<r<1$, there exists a ball $B\subset\mathbb{C}$
such that $B\cap\Lambda=\emptyset$, ${\rm diam} B\asymp r$, $d(z,B)=O(r)$. If the compact set
$\Lambda$ is shallow, then one can see that Hausdorff dimension ${\rm H\cdot dim}(\Lambda)$
of $\Lambda$ is strictly less than $2$. Thus we obtain Proposition \ref{B1}.

Next, let us recall deep points and measurable deep points.
Let $\Lambda\subset\mathbb{C}$ be a compact set, for all $z\in\Lambda$,
$z$ is a deep point of $\Lambda$ if and only if there exists $\delta>0$ such that
for all $0<r<1$, we have
\[B(y,s)\subset B(z,r)-\Lambda\Rightarrow
s=O(r^{1+\delta});\]
we say that $z$ is a measurable deep point of $\Lambda$, if there exists $\delta>0$ such that
for all $r>0$, we have
\[{\rm area}(B(z,r)-\Lambda)=O(r^{2+\delta}).\]
About deep points and measurable deep points, we have the following important connection, see
[Proposition 4.3,\cite{McM98}].

\begin{lemma}
\label{B5}If $z$ is a deep point of compact set $\Lambda\subset\mathbb{C}$ and
$\partial{\Lambda}$ is shallow, then $z$ is a measurable deep point of $\Lambda$.
\end{lemma}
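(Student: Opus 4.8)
The plan is to follow McMullen's argument for this statement, combining the two hypotheses at a single scale. Fix $\delta>0$ witnessing that $z$ is a deep point of $\Lambda$, and fix $0<r<1$; it suffices to bound ${\rm area}(B(z,r)\setminus\Lambda)$ by $O(r^{2+\delta'})$ for a uniform $\delta'>0$. Put $\eta:=C_0 r^{1+\delta}$, where $C_0$ is chosen strictly larger than the implicit constant in the deep-point inequality, and split
\[B(z,r)\setminus\Lambda=A_1\cup A_2,\quad A_1:=\{y\in B(z,r)\setminus\Lambda:d(y,\partial\Lambda)<\eta\},\quad A_2:=\{y\in B(z,r)\setminus\Lambda:d(y,\partial\Lambda)\geq\eta\}.\]

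First I would dispose of $A_2$ using only deepness. If $y\in A_2$, the open ball $B(y,\eta)$ is disjoint from $\partial\Lambda$; being connected, contained in $\mathbb{C}\setminus\partial\Lambda={\rm int}(\Lambda)\sqcup(\mathbb{C}\setminus\Lambda)$, and meeting $\mathbb{C}\setminus\Lambda$ at $y$, it lies entirely in $\mathbb{C}\setminus\Lambda$. If in addition $|y-z|\leq r-\eta$, then $B(y,\eta)\subset B(z,r)\setminus\Lambda$, contradicting the deep-point property, since $\eta=C_0 r^{1+\delta}$ exceeds the largest admissible radius. Hence $A_2$ is confined to the annulus $r-\eta<|y-z|<r$, so ${\rm area}(A_2)\leq 2\pi r\eta=O(r^{2+\delta})$.

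Next I would estimate $A_1$, which sits inside the $\eta$-neighborhood $N_\eta(\partial\Lambda):=\{x:d(x,\partial\Lambda)<\eta\}$ intersected with $B(z,r)$. The crucial ingredient is that a shallow compact set has uniform upper box dimension strictly below $2$: there is $\gamma\in(0,1]$, depending only on the implicit constants in the definition of shallowness, such that for all $w$, all $0<\rho<1$, and all $0<s<\rho$,
\[{\rm area}\big(N_s(\partial\Lambda)\cap B(w,\rho)\big)=O\big(\rho^{2-\gamma}s^{\gamma}\big).\]
This comes from the same dyadic multiscale counting that yields ${\rm H\cdot dim}(\partial\Lambda)<2$: if a dyadic square $Q$ of side $2^{-k}$ meets $\partial\Lambda$, shallowness of $\partial\Lambda$ furnishes a ball of diameter $\asymp 2^{-k}$ disjoint from $\partial\Lambda$ within distance $O(2^{-k})$ of $Q$, hence containing a generation-$(k+j_0)$ dyadic square free of $\partial\Lambda$ (so are all its descendants), with $j_0$ uniform; accounting for squares lying in the bounded neighborhood of $Q$ then gives a uniform gain $M_{k+j_0}\leq(4^{j_0}-\varepsilon)M_k$ with $\varepsilon>0$ fixed, where $M_k$ counts generation-$k$ squares meeting $\partial\Lambda\cap B(w,\rho)$. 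Iterating gives $M_k=O((2^k\rho)^{2-\gamma})$, and summing $2^{-2k}$ over these squares and their neighbors at the scale $2^{-k}\asymp s$ produces the displayed bound. Applying it with $\rho=r$, $s=\eta\asymp r^{1+\delta}$ yields ${\rm area}(A_1)=O(r^{2-\gamma}\cdot r^{(1+\delta)\gamma})=O(r^{2+\delta\gamma})$.

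Adding the two estimates, ${\rm area}(B(z,r)\setminus\Lambda)=O(r^{2+\delta\gamma})$, so $z$ is a measurable deep point of $\Lambda$ with exponent $\delta':=\delta\gamma$. The main obstacle is the neighborhood estimate for shallow sets: one must run the box-counting so that a hole lying in a square adjacent to $Q$ — rather than strictly inside $Q$ — is still charged correctly, and so that the per-batch gain $\varepsilon$ is genuinely uniform over all base points $w$ and all scales $\rho$; everything else is elementary point-set topology and a geometric-series estimate.
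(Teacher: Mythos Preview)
Your proposal is correct and is precisely the argument the paper invokes: the paper does not give its own proof of this lemma but simply cites it as \cite[Proposition~4.3]{McM98}, and what you have written is a faithful reconstruction of McMullen's proof there --- splitting $B(z,r)\setminus\Lambda$ according to distance from $\partial\Lambda$, using deepness to push the far part into a thin annulus, and using the box-dimension estimate coming from shallowness to bound the near part. Your caveat about charging the ``hole'' to a bounded neighborhood of $Q$ rather than to $Q$ itself is exactly the right technical point, and it is handled just as you indicate.
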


According to the similarity near Siegel disks of $f_{\alpha}$ and
$P_{\alpha}$, we can obtain the following result by the same way as
[Theorem 4.2,\cite{McM98}].

\begin{lemma}
\label{B6}For all $\delta>0$,
every point in $\partial\Delta_{\alpha}$ is a deep point of $K(\delta)$.
\end{lemma}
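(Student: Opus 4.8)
The plan is to follow the proof of [Theorem 4.2, \cite{McM98}] essentially line by line, replacing every use of the exact symmetry $\iota$ of the quadratic polynomial $P_{\alpha}$ by the bounded-distortion reflection $\iota_1$ introduced in the proof of Lemma \ref{B3}, and feeding in G.~Zhang's theorem \cite{ZGF1} that for bounded type $\alpha$ the boundary $\partial\Delta_{\alpha}$ is a quasicircle through the critical point $c_0=-1/3$, with $\overline{D'}_{\alpha}\cap\overline{\Delta}_{\alpha}=\{c_0\}$. I would organize the argument in two steps: first, that $c_0$ itself is a deep point of $K(\delta)$; second, that this deep-point property propagates to every point of $\partial\Delta_{\alpha}$.

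For the first step the mechanism is the renormalization self-similarity of a Siegel disk of bounded type. Since $\alpha=[0,a_1,a_2,\cdots]$ has uniformly bounded partial quotients, the renormalizations of $f_{\alpha}$ about $\Delta_{\alpha}$ (equivalently about $c_0$) are taken with uniformly bounded combinatorics, so after rescaling by the sizes coming from the continued-fraction approximants $p_n/q_n$, the picture near $c_0$ of $\partial\Delta_{\alpha}$, of $D'_{\alpha}$ and of $K(\delta)$ at level $n+1$ is a quasiconformal copy of the one at level $n$ with dilatation bounded independently of $n$, and the rescaling ratios are bounded away from $1$. The second critical point $-1$ of $f_{\alpha}$ is harmless here: it is a preimage of $0$ and $\overline{D'}_{\alpha}\cap\overline{\Delta}_{\alpha}=\{c_0\}$, so the relevant first-return map near $\Delta_{\alpha}$ is genuinely quadratic-like with its only critical point at $c_0$, and $\iota_1$ supplies exactly the $D'_{\alpha}\leftrightarrow\Delta_{\alpha}$ reflection at $c_0$, with bounded distortion, that McMullen's argument uses. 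From the thickness of $K(\delta)$ on the critical-value side (once the orbit of a point returns close enough to $\Delta_{\alpha}$ it stays in $K(\delta)$, so $K(\delta)$ contains round balls of size comparable to every relevant scale along $\partial\Delta_{\alpha}$) one obtains, as in \cite{McM98}, a constant $\delta_0>0$ such that any round ball contained in $B(c_0,r)\setminus K(\delta)$ has radius $O(r^{1+\delta_0})$; that is, $c_0$ is a deep point of $K(\delta)$.

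For the second step I would invoke the geometric replication Lemma \ref{B3}. Given $z\in\partial\Delta_{\alpha}\subset J(f_{\alpha})$ and a scale $r>0$, there is a univalent branch $f_{\alpha}^i\colon(U,y)\to(V,c_0)$ with ${\rm in-radius}(U,y)\asymp r$ and $|y-z|=O(r)$; by the Koebe distortion theorem $f_{\alpha}^i$ is, up to a uniformly bounded factor, a similarity on a definite subdisk of $U$, and along the branch the intermediate iterates of points of $U$ stay uniformly close to $\Delta_{\alpha}$, so $f_{\alpha}^{-i}$ carries $K(\delta)\cap V$ into $K(\delta')\cap U$ for a comparable $\delta'$ and carries $K(\delta)\cap U$ onto $f_{\alpha}^{-i}(K(\delta)\cap V)$. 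Transporting the deep-point estimate for $c_0$ through this bounded-distortion map, and using $K(\delta)\supset K(\delta')$ together with $|y-z|=O(r)$, one concludes that no round ball in $B(z,r)\setminus K(\delta)$ has radius more than $O(r^{1+\delta_0})$; since $r$ is arbitrary, $z$ is a deep point of $K(\delta)$. Combined with the shallowness of the quasicircle $\partial\Delta_{\alpha}$ and Lemma \ref{B5}, this also upgrades to ``measurable deep point'', which is what Proposition \ref{B2} ultimately needs.

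The main obstacle is the first step: making the phrase ``similarity near Siegel disks of $f_{\alpha}$ and $P_{\alpha}$'' quantitative enough to produce the self-similar model and the positive exponent $\delta_0$. Here one must reproduce McMullen's quasiconformal-rigidity argument along the renormalization tower, checking that every estimate that exploited the symmetry of $P_{\alpha}$ survives with $\iota_1$ in place of $\iota$ and that the distortion $\iota_1$ introduces is summable over scales; the verification that the extra critical point $-1$ never enters the relevant return dynamics, and the bookkeeping of the $\delta\leftrightarrow\delta'$ comparison under the finitely many intermediate iterates in the propagation step, are the remaining --- routine but unavoidable --- technical points.
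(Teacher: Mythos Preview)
Your proposal is correct and matches the paper's approach: the paper's own proof of this lemma is a single sentence, stating only that ``according to the similarity near Siegel disks of $f_{\alpha}$ and $P_{\alpha}$, we can obtain the following result by the same way as [Theorem 4.2, \cite{McM98}]'', and your plan is precisely a fleshed-out version of that citation, using the same ingredients the paper has already set up (Zhang's quasicircle theorem, the bounded-distortion reflection $\iota_1$, and Lemma \ref{B3}). If anything, your two-step outline is more detailed than what the paper supplies.
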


At last, by Lemma \ref{B5}, Lemma \ref{B6} and the fact that $J(f_{\alpha})$ is shallow,
we can obtain the proof of Proposition \ref{B2} by the similar way as that in the proof of
[Corollary 4.5,\cite{McM98}]. But
since $f_{\alpha}$ has critical points in $\mathbb{C}$ one more than quadratic polynomials
in $\mathbb{C}$, to prove that `$f_{\alpha}^n|_U$ is univalent', we have to analyze one more case than quadratic polynomials, that is, the case that $U$ is contained in a component of preimage of $D_{\alpha}'$ under some iterate of $f_{\alpha}$. Indeed, this follows easily from the Monodrony Theorem.

\section{Acknowledgements}
The research work was supported by National Key R\&D Program of China under Grant 2019YFB1406500.


\begin{thebibliography}{15}
\bibitem{AS} K. Astala. Area distortion of quasiconformal mappings. Acta Mathematica 173.1 (1994): 37-60.
\bibitem{AL} A. Avila and M. Lyubich. Lebesgue measure of Feigenbaum Julia sets. arXiv preprint arXiv:1504.02986 (2015).
\bibitem{B97} X. Buff. Ensembles de Julia de mesure positive (d'apr\'es van Strien et Nowicki).
S\'eminaire Bourbaki, Ast\'erisque v. 245 (1997): 7-39.
\bibitem{BC} X. Buff and A. Ch\'eritat. Quadratic Julia sets with positive area. Annals of Mathematics (2012): 673-746.
\bibitem{BR} L. Bers and H. Royden. Holomorphic families of injections. Acta Mathematica 157 (1986): 259-286.
\bibitem{C17} D. Cheraghi, Topology of irrationally indifferent attractors, arXiv: math.DS/1706. 02678 v1, 2017.
\bibitem{C1} A. Ch\'eritat. The hunt for Julia sets with positive measure, in Complex Dynamics, AK Peters, Wellesley, MA, (2009), pp. 539¨C559. MR 2508268. http://dx.doi.org/10.1201/b10617-19.
\bibitem{DH84} A. Douady and J. Hubbard. \'Etude dynamique des polyn\^omes complexes \uppercase\expandafter{\romannumeral1}. Publ. Math. d'Orsay {\bf 84-02} (1984).
\bibitem{DH85} A. Douady and J. Hubbard. \'Etude dynamique des polyn\^omes complexes \uppercase\expandafter{\romannumeral2}. Publ. Math. d'Orsay {\bf 84-04} (1985).
\bibitem{GA} F. Gardiner. Teichmiiller Theory and Quadratic Differentials, Wiley-Interscience, New York.(1987)
\bibitem{DH} A. Douady and J. Hubbard. On the dynamics of polynomial-like mappings. Annales scientifiques de l'\'Ecole normale sup\'erieure. Vol. 18. No. 2. (1985)
\bibitem{DL18} D. Dudko and M. Lyubich. Local connectivity of the Mandelbrot set at some satellite parameters of bounded type. arXiv preprint arXiv:1808.10425 (2018).
\bibitem{F1919} P. Fatou. Sur les \'equations fonctionelles, 2-me memoire. Bull. S.M.F., t. 47 (1919): p. 43.
\bibitem{FY} Y. Fu and F. Yang. Area and Hausdorff dimension of Sierpi¨½ski carpet Julia sets. Mathematische Zeitschrift (2019): 1-16.
\bibitem{HA} J. Hubbard. Teichm¨¹ller theory and applications to geometry, topology, and dynamics.(2016)
\bibitem{IS} H. Inou and M. Shishikura. The renormalization for parabolic fixed points and their perturbation. preprint (2006).
\bibitem{McM} C. McMullen. Complex Dynamics and Renormalization, Annals of Mathematics Studies. Princeton Univ. Press, Princeton, (1994).
\bibitem{McM98} C. McMullen. Self-similarity of Siegel disks and Hausdorff dimension of Julia sets. Acta Math. 180 (1998), 247¨C292. MR 1638776. Zbl 0930.37022. http://dx.doi.org/10.1007/BF02392901.
\bibitem{Lyu83} M. Y. Lyubich, Typical behavior of trajectories of the rational mapping of a sphere, Dokl. Akad. Nauk SSSR 268 (1983): 29¨C32. MR 0687919. Zbl 0595.30034.
\bibitem{Lyu84} M. Lyubich. Investigation of the stability of the dynamics of rational functions Teor. Funktsii Funktsional. Anal. i Prilozhen 42 (1984): 72-91.
\bibitem{Lyu} M. Lyubich. On the Lebesgue measure of the Julia set of a quadratic polynomial. arXiv preprint math/9201285 (1991).
\bibitem{PZ} C. L. Petersen and S. Zakeri. On the Julia set of a typical quadratic
polynomial with a Siegel disk, Ann. of Math. 159 (2004): 1¨C52. MR 2051390.
Zbl 1069.37038. http://dx.doi.org/10.4007/annals.2004.159.1.
\bibitem{R} S. Rickman. Removability theorems for quasiconformal mappings. Ann. Acad. Sci. Fenn. Ser.
    A I. 449(1969),1-8.
\bibitem{S87} M. Shishikura. On the quasiconformal surgery of rational functions. In: Annales scientifiques de l'\'Ecole Normale Sup\'erieure. (1987). p. 1-29.
\bibitem{Shi1} M. Shishikura. Topological, geometric and complex analytic properties of
Julia sets, in Proceedings of the International Congress of Mathematicians,
Vol. 1, 2 (Z\"urich, 1994), Birkh\"auser, Basel, (1995): pp. 886¨C895. MR 1403988.
Zbl 0843.30026.
\bibitem{ST00} M. Shishikura and L. Tan. An alternative proof of Ma\~n\'e's theorem on non-expanding Julia sets. London Mathematical Society Lecture Note Series (2000): 265-280.
\bibitem{SY} M. Shishikura and F. Yang. The high type quadratic Siegel disks are Jordan domains. arXiv preprint arXiv:1608.04106 (2016).
\bibitem{Yo} J. Yoccoz. Petits diviseurs en dimension 1. Ast\'erisque {\bf 231}(1995)
\bibitem{ZGF1} G. Zhang. All bounded type Siegel disks of rational maps are quasi-disks. Invent. Math.
185 (2011), no. 2, 421-466.
\end{thebibliography}
\end{document}